\newtheorem{thm}{Theorem}[section]
\newtheorem{lem}[thm]{Lemma}
\newtheorem{prop}[thm]{Proposition}
\theoremstyle{definition}
\newtheorem{defin}[thm]{Definition}
\numberwithin{equation}{section}
\begin{document}


\baselineskip=17pt



\title[Weighted Morrey spaces related to nonnegative potentials]{Weighted Morrey spaces related to certain nonnegative potentials and Riesz transforms}

\author[H. Wang]{Hua Wang}
\address{College of Mathematics and Econometrics, Hunan University, Changsha, 410082, P. R. China\\
\&~Department of Mathematics and Statistics, Memorial University, St. John's, NL A1C 5S7, Canada}
\email{wanghua@pku.edu.cn}
\date{}

\begin{abstract}
Let $\mathcal L=-\Delta+V$ be a Schr\"odinger operator, where $\Delta$ is the Laplacian on $\mathbb R^d$ and the nonnegative potential $V$ belongs to the reverse H\"older class $RH_q$ for $q\geq d$. The Riesz transform associated with the operator $\mathcal L=-\Delta+V$ is denoted by $\mathcal R=\nabla{(-\Delta+V)}^{-1/2}$ and the dual Riesz transform is denoted by $\mathcal R^{\ast}=(-\Delta+V)^{-1/2}\nabla$. In this paper, we first introduce some kinds of weighted Morrey spaces related to certain nonnegative potentials belonging to the reverse H\"older class $RH_q$ for $q\geq d$. Then we will establish the boundedness properties of the operators $\mathcal R$ and its adjoint $\mathcal R^{\ast}$ on these new spaces. Furthermore, weighted strong-type estimate and weighted endpoint estimate for the corresponding commutators $[b,\mathcal R]$ and $[b,\mathcal R^{\ast}]$ are also obtained. The classes of weights, the classes of symbol functions as well as weighted Morrey spaces discussed in this paper are larger than $A_p$, $\mathrm{BMO}(\mathbb R^d)$ and $L^{p,\kappa}(w)$ corresponding to the classical Riesz transforms ($V\equiv0$).
\end{abstract}
\subjclass[2010]{Primary 42B20; 35J10; Secondary 46E30; 47B47}
\keywords{Schr\"odinger operators; Riesz transforms; commutators; weighted Morrey spaces; $A^{\rho,\infty}_p$ weights; $\mathrm{BMO}_{\rho,\infty}(\mathbb R^d)$}

\maketitle

\section{Introduction}

\subsection{The critical radius function $\rho(x)$}
Let $d\geq3$ be a positive integer and $\mathbb R^d$ be the $d$-dimensional Euclidean space. A nonnegative locally integrable function $V(x)$ on $\mathbb R^d$ is said to belong to the reverse H\"older class $RH_q$ for some exponent $1<q<\infty$, if there exists a positive constant $C>0$ such that the following reverse H\"older inequality
\begin{equation*}
\left(\frac{1}{|B|}\int_B V(y)^q\,dy\right)^{1/q}\leq C\left(\frac{1}{|B|}\int_B V(y)\,dy\right)
\end{equation*}
holds for every ball $B$ in $\mathbb R^d$.
For given $V\in RH_q$ with $q\geq d$, we introduce the \emph{critical radius function} $\rho(x)=\rho(x,V)$ which is given by
\begin{equation}\label{rho}
\rho(x):=\sup_{r>0}\bigg\{r:\frac{1}{r^{d-2}}\int_{B(x,r)}V(y)\,dy\leq1\bigg\},\quad x\in\mathbb R^d,
\end{equation}
where $B(x,r)$ denotes the open ball centered at $x$ and with radius $r$. It is well known that $0<\rho(x)<\infty$ for any $x\in\mathbb R^d$ under our assumption (see \cite{shen}). We need the following known result concerning the critical radius function.
\begin{lem}[\cite{shen}]\label{N0}
If $V\in RH_q$ with $q\geq d$, then there exist two constants $C>0$ and $N_0\geq 1$ such that
\begin{equation}\label{com}
\frac{\,1\,}{C}\left(1+\frac{|x-y|}{\rho(x)}\right)^{-N_0}\leq\frac{\rho(y)}{\rho(x)}\leq C\left(1+\frac{|x-y|}{\rho(x)}\right)^{\frac{N_0}{N_0+1}},
\end{equation}
for all $x,y\in\mathbb R^d$. As a straightforward consequence of \eqref{com}, we have that for all $k=1,2,3,\dots$, the following estimate
\begin{equation}\label{com2}
1+\frac{2^kr}{\rho(y)}\geq C\left(1+\frac{r}{\rho(x)}\right)^{-\frac{N_0}{N_0+1}}\left(1+\frac{2^kr}{\rho(x)}\right)
\end{equation}
is valid for any $y\in B(x,r)$ with $x\in\mathbb R^d$ and $r>0$.
\end{lem}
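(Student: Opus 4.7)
The lemma has two independent components: the two-sided comparison \eqref{com}, which is a known result of Shen, and its consequence \eqref{com2}. My plan is to invoke \cite{shen} directly for \eqref{com} and then to derive \eqref{com2} purely from the upper half of \eqref{com}.

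For \eqref{com} itself, I would not reproduce Shen's argument. The underlying idea is that the definition of $\rho(x)$ forces the quantity $r^{2-d}\int_{B(x,r)}V(y)\,dy$ to be of order one at $r=\rho(x)$, and the reverse H\"older hypothesis $V\in RH_{q}$ with $q\geq d$ provides enough self-improvement in the local integrability of $V$ to translate this into a quantitative comparison of $\rho$ at nearby points. The exponent $N_0/(N_0+1)$ arises from balancing the scaling of this integral against a shift of the center, and the argument is carried out in detail in \cite{shen}; I would simply quote it.

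For \eqref{com2}, my plan is essentially algebraic and uses only the right-hand inequality in \eqref{com}. Fix $y\in B(x,r)$, so $|x-y|<r$, and set $t:=r/\rho(x)$. Substituting $|x-y|<r$ into the upper bound of \eqref{com} yields $\rho(y)\leq C\rho(x)(1+t)^{N_0/(N_0+1)}$, hence
\[
\frac{2^{k}r}{\rho(y)}\;\geq\;\frac{1}{C}\cdot 2^{k}t\cdot(1+t)^{-N_0/(N_0+1)}.
\]
Adding $1$ on both sides and using the elementary observation that $(1+t)^{-N_0/(N_0+1)}\leq 1$ for $t\geq 0$ (since $N_0/(N_0+1)\in(0,1)$), I would then factor $(1+t)^{-N_0/(N_0+1)}$ out of the right-hand side and absorb the prefactor into a new constant, arriving at \eqref{com2}.

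The only real obstacle is a minor bookkeeping matter: keeping the constant uniform while separating the summands $1$ and $2^{k}t$ inside the parenthesis on the right of \eqref{com2}. A one-line case split on whether $2^{k}t\leq 1$ or $2^{k}t>1$, or equivalently the elementary observation $a+b\geq\min(1,1/C)(c+d)$ whenever $a\geq c$ and $b\geq d/C$, makes the matching transparent. Beyond this, no new estimate enters.
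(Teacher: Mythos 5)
Your proposal is correct. The paper itself gives no proof of this lemma: it cites Shen for the two-sided bound \eqref{com} and labels \eqref{com2} ``a straightforward consequence,'' so there is no written argument to compare against. Your derivation of \eqref{com2} is the natural (and, I would say, intended) one: apply the upper half of \eqref{com} with $|x-y|<r$ to get $\rho(y)\leq C\rho(x)\bigl(1+r/\rho(x)\bigr)^{N_0/(N_0+1)}$, convert this to a lower bound on $2^kr/\rho(y)$, use $\bigl(1+r/\rho(x)\bigr)^{-N_0/(N_0+1)}\leq 1$ to bound the summand $1$ from below, and absorb the constant via the $\min(1,1/C)$ observation. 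The bookkeeping step you flag is exactly the only thing to check, and you handle it correctly.
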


\subsection{Schr\"odinger operators}
On $\mathbb R^d$, $d\geq3$, we consider the \emph{Schr\"odinger operator}
\begin{equation*}
\mathcal L:=-\Delta+V,
\end{equation*}
where $V\in RH_q$ for $q\geq d$. The \emph{Riesz transform} associated with the Schr\"odinger operator $\mathcal L$ is defined by
\begin{equation}\label{riesz}
\mathcal R:=\nabla{\mathcal L}^{-1/2},
\end{equation}
and the associated \emph{dual Riesz transform} is defined by
\begin{equation}\label{riesz2}
\mathcal R^{\ast}:={\mathcal L}^{-1/2}\nabla.
\end{equation}
Boundedness properties of $\mathcal R$ and its adjoint $\mathcal R^{\ast}$ have been obtained by Shen in \cite{shen}, where he showed that they are all bounded on $L^p(\mathbb R^d)$ for any $1<p<\infty$ when $V\in RH_q$ with $q\geq d$. Actually, $\mathcal R$ and its adjoint $\mathcal R^{\ast}$ are standard Calder\'on-Zygmund operators in such a situation. The operators $\mathcal R$ and $\mathcal R^{\ast}$ have singular kernels that will be denoted by $\mathcal K(x,y)$ and $\mathcal K^{\ast}(x,y)$, respectively. For such kernels, we have the following key estimates, which can be found in \cite{shen} and \cite{bong2,bong3}.
\begin{lem}\label{kernel}
Let $V\in RH_q$ with $q\geq d$. For any positive integer $N$, there exists a positive constant $C_N>0$ such that
\begin{equation*}
\begin{cases}
\mathcal |\mathcal K(x,y)|\leq C_N\displaystyle\bigg(1+\frac{|x-y|}{\rho(x)}\bigg)^{-N}\frac{1}{|x-y|^d};&\\
\mathcal |\mathcal K^{\ast}(x,y)|\leq C_N\displaystyle\bigg(1+\frac{|x-y|}{\rho(x)}\bigg)^{-N}\frac{1}{|x-y|^d}.&
\end{cases}
\end{equation*}
\end{lem}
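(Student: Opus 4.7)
The plan is to derive both estimates from heat kernel bounds for the semigroup $e^{-t\mathcal L}$ via the subordination formula
\[
\mathcal L^{-1/2}=\frac{1}{\sqrt\pi}\int_0^\infty e^{-t\mathcal L}\,\frac{dt}{\sqrt t}.
\]
The starting input is the Gaussian-type bound for the heat kernel $k_t(x,y)$ of $e^{-t\mathcal L}$ and its gradient, which in the $RH_q$ setting ($q\ge d$) takes the form: for every $M\ge 0$ there is $C_M>0$ with
\[
|k_t(x,y)|+\sqrt t\,|\nabla_x k_t(x,y)|\le\frac{C_M}{t^{d/2}}\exp\!\left(-\frac{|x-y|^2}{ct}\right)\!\left(1+\frac{\sqrt t}{\rho(x)}+\frac{\sqrt t}{\rho(y)}\right)^{-M}.
\]
These bounds are essentially those of Dziubański--Zienkiewicz/Kurata and are the ones cited from \cite{shen,bong2,bong3}; I would take them as an input rather than re-prove them.

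For the bound on $\mathcal K$, differentiating under the subordination integral gives
\[
\mathcal K(x,y)=\frac{1}{\sqrt\pi}\int_0^\infty \nabla_x k_t(x,y)\,\frac{dt}{\sqrt t},
\]
and I would split the integral at $t=|x-y|^2$. On the short-time piece $t\le|x-y|^2$ the Gaussian factor yields, after the elementary inequality $e^{-|x-y|^2/ct}\le C_N(\sqrt t/|x-y|)^{2N}$, the clean bound $|x-y|^{-d}(\sqrt t/|x-y|)^{2N}$, whose $t$-integral against $dt/(t\cdot\sqrt t\,\sqrt t)$ converges and supplies one power of $|x-y|^{-1}$ per factor. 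On the long-time piece $t>|x-y|^2$ I would directly use the $(1+\sqrt t/\rho(x))^{-M}$ factor, with $M$ chosen large, to obtain convergence of $\int_{|x-y|^2}^\infty t^{-(d+2)/2}(1+\sqrt t/\rho(x))^{-M}\,dt$, and this piece produces the decay $(1+|x-y|/\rho(x))^{-N}|x-y|^{-d}$. A standard comparison $e^{-c|x-y|^2/t}\le C_N\bigl(1+|x-y|/\rho(x)\bigr)^{-N}\bigl(1+\sqrt t/\rho(x)\bigr)^{N}e^{-c'|x-y|^2/t}$ on the short-time range lets both pieces be combined into the asserted bound.

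For the dual kernel $\mathcal K^\ast$, I would exploit $\mathcal K^\ast(x,y)=\overline{\mathcal K(y,x)}$ (componentwise), so that the argument above applied with roles of $x,y$ swapped already gives
\[
|\mathcal K^\ast(x,y)|\le C_N\left(1+\frac{|x-y|}{\rho(y)}\right)^{-N}\frac{1}{|x-y|^d}.
\]
To replace $\rho(y)$ by $\rho(x)$ as stated, I would invoke Lemma \ref{N0}: the inequality $\rho(y)\le C\rho(x)\bigl(1+|x-y|/\rho(x)\bigr)^{N_0/(N_0+1)}$ forces
\[
1+\frac{|x-y|}{\rho(y)}\ge c\left(1+\frac{|x-y|}{\rho(x)}\right)^{1/(N_0+1)},
\]
so the $\rho(y)$-decay converts to $\rho(x)$-decay at the cost of dividing the exponent by $N_0+1$; since $N$ is arbitrary, this loss is harmless.

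The main technical obstacle is really the short-time/long-time interpolation that turns the Gaussian decay and the $\sqrt t/\rho$ decay into a single $|x-y|/\rho(x)$ decay; the trick is choosing the exponent $M$ in the heat kernel bound large enough (depending on $N$) so that the exchange $e^{-c|x-y|^2/t}\longleftrightarrow (1+\sqrt t/\rho(x))^{\pm}$ does not destroy integrability at either end. Once that bookkeeping is done, the estimate for $\mathcal K$ is routine, and the estimate for $\mathcal K^\ast$ follows from the duality plus Lemma \ref{N0} as above.
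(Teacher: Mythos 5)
The paper does not prove this lemma itself; it simply quotes the estimates from \cite{shen} and \cite{bong2,bong3}, where the bounds are obtained by Shen's resolvent method. Shen writes $\mathcal L^{-1/2}=\frac{2}{\pi}\int_0^\infty(\mathcal L+\tau^2)^{-1}\,d\tau$ and derives pointwise decay of the fundamental solution $\Gamma(x,y;\tau)$ of $-\Delta+V+\tau^2$ (with decay in the parameter $\tau$), rather than working with the semigroup $e^{-t\mathcal L}$. Your route through the heat kernel and the subordination formula $\mathcal L^{-1/2}=\pi^{-1/2}\int_0^\infty e^{-t\mathcal L}\,t^{-1/2}\,dt$ is a genuinely different, and equally standard, way to get the same kernel bounds; the two are related by a Laplace transform but look quite different in practice. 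Your computation is sound: splitting at $t=|x-y|^2$, the short-time piece uses part of the Gaussian to produce powers of $\sqrt t/|x-y|$ and part to retain integrability, while the long-time piece uses the factor $\bigl(1+\sqrt t/\rho(x)\bigr)^{-M}$; choosing $M$ large enough relative to $N$ makes everything converge and assembles into $(1+|x-y|/\rho(x))^{-N}|x-y|^{-d}$. Your treatment of $\mathcal K^*$ by duality plus Lemma~\ref{N0} is correct (and the inequality $1+|x-y|/\rho(y)\ge c\,(1+|x-y|/\rho(x))^{1/(N_0+1)}$ does follow from \eqref{com}, using the additive $1$ to handle the regime $|x-y|\lesssim\rho(x)$). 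One small simplification worth noting: since the heat kernel decay factor $\bigl(1+\sqrt t/\rho(x)+\sqrt t/\rho(y)\bigr)^{-M}$ is symmetric in $x$ and $y$, you can bound $\nabla_y k_t$ and keep the $\rho(x)$-decay directly, avoiding the loss of a factor $N_0+1$ and the appeal to Lemma~\ref{N0} altogether. The only genuine input you must take on faith is the $\sqrt t\,|\nabla_x k_t|$ bound with arbitrary polynomial decay in $\sqrt t/\rho$; this is indeed available for $V\in RH_q$, $q\ge d$, but it is a nontrivial theorem in its own right, so one should be explicit that the proof is being reduced to that estimate rather than to first principles.
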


\subsection{$A^{\rho,\infty}_p$ weights}
A weight will always mean a nonnegative function which is locally integrable on $\mathbb R^d$. Given a Lebesgue measurable set $E$ and a weight $w$, $|E|$ will denote the Lebesgue measure of $E$ and
\begin{equation*}
w(E)=\int_E w(x)\,dx.
\end{equation*}
Given $B=B(x_0,r)$ and $t>0$, we will write $tB$ for the $t$-dilate ball, which is the ball with the same center $x_0$ and with radius $tr$. In \cite{bong1} (see also \cite{bong2,bong3}), Bongioanni, Harboure and Salinas introduced the following classes of weights that are given in terms of the critical radius function \eqref{rho}. Following the terminology of \cite{bong1}, for given $1<p<\infty$, we define
\begin{equation*}
A^{\rho,\infty}_p:=\bigcup_{\theta>0}A^{\rho,\theta}_p,
\end{equation*}
where $A^{\rho,\theta}_p$ is the set of all weights $w$ such that
\begin{equation*}
\bigg(\frac{1}{|B|}\int_B w(x)\,dx\bigg)^{1/p}\bigg(\frac{1}{|B|}\int_B w(x)^{-{p'}/p}\,dx\bigg)^{1/{p'}}
\leq C\cdot\left(1+\frac{r}{\rho(x_0)}\right)^{\theta}
\end{equation*}
holds for every ball $B=B(x_0,r)\subset\mathbb R^d$ with $x_0\in\mathbb R^d$ and $r>0$, where $p'$ is the dual exponent of $p$ such that $1/p+1/{p'}=1$. For $p=1$ we define
\begin{equation*}
A^{\rho,\infty}_1:=\bigcup_{\theta>0}A^{\rho,\theta}_1,
\end{equation*}
where $A^{\rho,\theta}_1$ is the set of all weights $w$ such that
\begin{equation*}
\frac1{|B|}\int_B w(x)\,dx\leq C\cdot\left(1+\frac{r}{\rho(x_0)}\right)^{\theta}\underset{x\in B}{\mbox{ess\,inf}}\;w(x)
\end{equation*}
holds for every ball $B=B(x_0,r)$ in $\mathbb R^d$. For $\theta>0$, let us introduce the maximal operator that is given in terms of the critical radius function \eqref{rho}.
\begin{equation*}
M_{\rho,\theta}f(x):=\sup_{r>0}\left(1+\frac{r}{\rho(x)}\right)^{-\theta}\frac{1}{|B(x,r)|}\int_{B(x,r)}|f(y)|\,dy,\quad x\in\mathbb R^d.
\end{equation*}
Observe that a weight $w$ belongs to the class $A^{\rho,\infty}_1$ if and only if there exists a positive number $\theta>0$ such that $M_{\rho,\theta}w\leq Cw$, where the constant $C>0$ is independent of $w$. Since
\begin{equation*}
1\leq\left(1+\frac{r}{\rho(x_0)}\right)^{\theta_1}\leq\left(1+\frac{r}{\rho(x_0)}\right)^{\theta_2}
\end{equation*}
for $0<\theta_1<\theta_2<\infty$, then for given $p$ with $1\leq p<\infty$, one has
\begin{equation*}
A_p\subset A^{\rho,\theta_1}_p\subset A^{\rho,\theta_2}_p,
\end{equation*}
where $A_p$ denotes the classical Muckenhoupt's class (see \cite[Chapter 7]{grafakos}), and hence $A_p\subset A^{\rho,\infty}_p$. In addition, for some fixed $\theta>0$,
\begin{equation*}
A^{\rho,\theta}_1\subset A^{\rho,\theta}_{p_1}\subset A^{\rho,\theta}_{p_2}
\end{equation*}
whenever $1\leq p_1<p_2<\infty$. Now, we present an important property of the classes of weights in $A^{\rho,\theta}_p$ with $1\leq p<\infty$, which was given by Bongioanni et al. in \cite[Lemma 5]{bong1}.
\begin{lem}[\cite{bong1}]\label{rh}
If $w\in A^{\rho,\theta}_p$ with $0<\theta<\infty$ and $1\leq p<\infty$, then there exist positive constants $\epsilon,\eta>0$ and $C>0$ such that
\begin{equation}\label{rholder}
\left(\frac{1}{|B|}\int_B w(x)^{1+\epsilon}dx\right)^{\frac{1}{1+\epsilon}}
\leq C\left(\frac{1}{|B|}\int_B w(x)\,dx\right)\left(1+\frac{r}{\rho(x_0)}\right)^{\eta}
\end{equation}
for every ball $B=B(x_0,r)$ in $\mathbb R^d$.
\end{lem}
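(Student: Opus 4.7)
\emph{Plan.} I would adapt the classical Coifman--Fefferman argument that derives the reverse H\"older inequality from $A_p$, carefully tracking the polynomial factors in $1+r/\rho(x_0)$ that appear each time the localized condition $A^{\rho,\theta}_p$ is invoked in place of $A_p$. The argument has three ingredients: a localized $A_\infty$-type bound, a Calder\'on--Zygmund stopping-time decomposition inside $B$, and a layer-cake integration. All local $\rho$-factors will be absorbed into a single global factor via Lemma \ref{N0}.

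For the first ingredient, take $B=B(x_0,r)$, $E\subset B$ measurable, and apply H\"older's inequality to $|E|=\int_E w^{1/p}\,w^{-1/p}\,dx$ together with the $A^{\rho,\theta}_p$ condition on $B$; one obtains
\[
\frac{|E|}{|B|}\leq C\Bigl(1+\tfrac{r}{\rho(x_0)}\Bigr)^{\theta}\Bigl(\frac{w(E)}{w(B)}\Bigr)^{1/p}.
\]
(The case $p=1$ follows directly from the $A^{\rho,\theta}_1$ definition.) For the second ingredient, set $\lambda_0=\frac{1}{|B|}\int_B w\,dx$ and fix $a>1$; at each level $k\geq 0$ perform a dyadic Calder\'on--Zygmund decomposition of $w$ on $B$ at level $\lambda_k=\lambda_0 a^k$, producing disjoint subcubes $\{Q^k_j\}_j$ with $\lambda_k<\frac{1}{|Q^k_j|}\int_{Q^k_j}w\leq 2^d\lambda_k$ and $w\leq\lambda_k$ a.e.\ on the complement. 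Applying the first step to each $Q^k_j$ with $E=Q^k_j\cap\{w>\lambda_{k+1}\}$ and using Lemma \ref{N0} to bound $(1+r_{Q^k_j}/\rho(c_{Q^k_j}))^{\theta}\leq CM(B)$ with $M(B):=(1+r/\rho(x_0))^{\theta(N_0+1)}$, together with the stopping-time bound $w(E)\leq(2^d/a)w(Q^k_j)$, yields a contraction ratio $\tau(B):=CM(B)^{p}\cdot 2^d/a$ and hence geometric decay $|\{x\in B:w(x)>\lambda_k\}|\leq\tau(B)^{k}|B|$.

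To finish, I would choose $a$ polynomial in $M(B)$ so that $\tau(B)\leq 1/2$ uniformly in $B$, paying back this enlargement of $a$ as a factor $(1+r/\rho(x_0))^{\eta}$ in the final estimate. Inserting the geometric decay into the layer-cake identity
\[
\int_B w^{1+\epsilon}\,dx=(1+\epsilon)\int_0^\infty\lambda^\epsilon\bigl|\{x\in B:w(x)>\lambda\}\bigr|\,d\lambda
\]
and choosing $\epsilon>0$ so small that $a^{1+\epsilon}\tau(B)<1$ uniformly yields \eqref{rholder} with $\epsilon,\eta$ depending only on $d$, $p$, $\theta$ and $N_0$. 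The main technical obstacle is precisely this uniformity: the raw stopping-time ratio $\tau(B)$ tends to~$1$ as $r/\rho(x_0)\to\infty$, so keeping $\epsilon$ independent of the ball forces the trade-off between $a$ and the $(1+r/\rho(x_0))^\eta$ factor; Lemma \ref{N0} is what makes this trade-off quantitative and produces the final exponent $\eta$.
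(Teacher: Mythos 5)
This lemma is cited from \cite{bong1} (it is Lemma~5 there), so the paper offers no proof of its own; I compare your plan with what a correct proof must contain. Your first two ingredients are reasonable, but the closing step has a genuine gap, and the ``stopping-time bound'' is misattributed. The Calder\'on--Zygmund stopping gives $\bigl|\bigcup_iQ^{k+1}_i\cap Q^k_j\bigr|\le(2^d/a)|Q^k_j|$ --- a bound in \emph{Lebesgue} measure, not $w(E)\le(2^d/a)w(Q^k_j)$. Feeding this into your $A_\infty$-type inequality on the complement inside $Q^k_j$ yields the $w$-measure contraction $w\bigl(\bigcup_iQ^{k+1}_i\cap Q^k_j\bigr)\le\beta(B)\,w(Q^k_j)$ with $\beta(B)=1-\bigl((1-2^d/a)/(CM(B))\bigr)^p$, where $M(B)=(1+r/\rho(x_0))^{\theta(N_0+1)}$. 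Even as $a\to\infty$ one only has $\beta(B)\ge 1-(CM(B))^{-p}$, which tends to $1$ as $r/\rho(x_0)\to\infty$. The convergence requirement is $a^{\epsilon}\beta(B)<1$ (using the correct layer-cake $\int_Bw^{1+\epsilon}\,dx=\epsilon\int_0^\infty\lambda^{\epsilon-1}w(\{w>\lambda\}\cap B)\,d\lambda$; the Lebesgue-measure version you wrote gives geometric ratio $a^{1+\epsilon}\cdot2^d/a=a^{\epsilon}2^d>1$ for every $a>1$, so that series never converges at all), and this forces $\epsilon\le CM(B)^{-p}\to0$. No enlargement of $a$, polynomial in $M(B)$ or otherwise, removes the $M(B)^{-p}$ bottleneck: the trade-off you describe cannot produce a $B$-independent $\epsilon$, which is precisely what the lemma asserts.

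What is missing is the two-scale structure of $A^{\rho,\theta}_p$. On sub-critical balls $B(y,s)$ with $s\le\rho(y)$ the factor $(1+s/\rho(y))^\theta$ is bounded, and by Lemma~\ref{N0} the same holds for all subballs, so $w$ is a genuine $A_p$ weight on such balls with a uniform constant; the classical Coifman--Fefferman theorem then gives one $\epsilon_0>0$ and $C_0$ valid on every sub-critical ball simultaneously. A super-critical ball $B=B(x_0,r)$ is covered by boundedly overlapping critical balls $B_i=B(y_i,c\rho(y_i))$ with $y_i\in B$; by Lemma~\ref{N0}, $\rho(y_i)\ge C^{-1}\rho(x_0)(1+r/\rho(x_0))^{-N_0}$, hence $|B|/|B_i|\le C(1+r/\rho(x_0))^{d(N_0+1)}$ and the number of the $B_i$ is controlled by the same quantity. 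Summing the local reverse H\"older estimates over $i$, using $\sum_iw(B_i)^{1+\epsilon_0}\le\bigl(\sum_iw(B_i)\bigr)^{1+\epsilon_0}\le Cw(B)^{1+\epsilon_0}$ together with the bound on $|B|/|B_i|$, produces \eqref{rholder} with $\epsilon=\epsilon_0$ fixed and all the size degradation absorbed into the polynomial factor $(1+r/\rho(x_0))^{\eta}$. It is this covering step, not a single global stopping-time iteration on $B$, that carries the content of the lemma.
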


As a direct consequence of Lemma \ref{rh}, we have the following result.
\begin{lem}\label{comparelem}
If $w\in A^{\rho,\theta}_p$ with $0<\theta<\infty$ and $1\leq p<\infty$, then there exist two positive numbers $\delta>0$ and $\eta>0$ such that
\begin{equation}\label{compare}
\frac{w(E)}{w(B)}\leq C\left(\frac{|E|}{|B|}\right)^\delta\left(1+\frac{r}{\rho(x_0)}\right)^{\eta}
\end{equation}
for any measurable subset $E$ of a ball $B=B(x_0,r)$, where $C>0$ is a constant which does not depend on $E$ and $B$.
\end{lem}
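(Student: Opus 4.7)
The plan is to derive \eqref{compare} from the weighted reverse Hölder inequality \eqref{rholder} via a standard Hölder-type argument, exactly parallelling the classical Muckenhoupt case but carrying along the extra growth factor $(1+r/\rho(x_0))^\eta$.

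First, I would apply Hölder's inequality with exponents $1+\epsilon$ and $(1+\epsilon)/\epsilon$, where $\epsilon>0$ is the constant supplied by Lemma~\ref{rh}, to write
\begin{equation*}
w(E)=\int_E w(x)\,dx\leq\left(\int_E w(x)^{1+\epsilon}\,dx\right)^{1/(1+\epsilon)}|E|^{\epsilon/(1+\epsilon)}.
\end{equation*}
Since $E\subset B$, the integral over $E$ is bounded by the integral over $B$, and inserting $|B|^{-1/(1+\epsilon)}\cdot|B|^{1/(1+\epsilon)}$ lets me invoke \eqref{rholder} directly.

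Next, using Lemma~\ref{rh}, I would estimate
\begin{equation*}
\left(\int_B w(x)^{1+\epsilon}\,dx\right)^{1/(1+\epsilon)}\leq C\,|B|^{1/(1+\epsilon)}\cdot\frac{w(B)}{|B|}\left(1+\frac{r}{\rho(x_0)}\right)^{\eta},
\end{equation*}
with $\eta$ as in \eqref{rholder}. Combining this with the Hölder estimate yields
\begin{equation*}
w(E)\leq C\,w(B)\left(\frac{|E|}{|B|}\right)^{\epsilon/(1+\epsilon)}\left(1+\frac{r}{\rho(x_0)}\right)^{\eta},
\end{equation*}
which is precisely \eqref{compare} with $\delta:=\epsilon/(1+\epsilon)\in(0,1)$.

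Since the entire argument is driven by Lemma~\ref{rh}, there is no real obstacle here; the lemma is essentially a cosmetic rewriting of \eqref{rholder}. The only point worth being careful about is keeping track of the proper scaling by $|B|^{1/(1+\epsilon)}$ when bridging between the $L^{1+\epsilon}$-norm and $L^1$-norm of $w$, so that the resulting exponents on $|E|/|B|$ and on $1+r/\rho(x_0)$ come out correctly and are independent of the particular ball $B$ and subset $E$.
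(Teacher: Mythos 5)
Your proposal is correct and follows essentially the same route as the paper: Hölder's inequality with exponent $1+\epsilon$ together with the weighted reverse Hölder inequality \eqref{rholder} from Lemma~\ref{rh}, yielding $\delta=\epsilon/(1+\epsilon)$. The only cosmetic difference is that you apply Hölder to $\int_E w$ directly, whereas the paper writes $w(E)=\int_B\chi_E\cdot w$ and applies Hölder to the product over $B$; the two computations are identical.
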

For any given ball $B=B(x_0,r)$ with $x_0\in\mathbb R^d$ and $r>0$, suppose that $E\subset B$, then by H\"older's inequality with exponent $1+\epsilon$ and \eqref{rholder}, we can deduce that
\begin{equation*}
\begin{split}
w(E)&=\int_{B}\chi_E(x)\cdot w(x)\,dx\\
&\leq
\left(\int_B w(x)^{1+\epsilon}dx\right)^{\frac{1}{1+\epsilon}}
\left(\int_B\chi_E(x)^{\frac{1+\epsilon}{\epsilon}}\,dx\right)^{\frac{\epsilon}{1+\epsilon}}\\
&\leq C|B|^{\frac{1}{1+\epsilon}}\left(\frac{1}{|B|}\int_B w(x)\,dx\right)\left(1+\frac{r}{\rho(x_0)}\right)^{\eta}|E|^{\frac{\epsilon}{1+\epsilon}}\\
&=C\left(\frac{|E|}{|B|}\right)^{\frac{\epsilon}{1+\epsilon}}\left(1+\frac{r}{\rho(x_0)}\right)^{\eta}.
\end{split}
\end{equation*}
This gives \eqref{compare} with $\delta=\epsilon/{(1+\epsilon)}$.

Given a weight $w$ on $\mathbb R^d$, as usual, the weighted Lebesgue space $L^p(w)$ for $1\leq p<\infty$ is defined to be the set of all functions $f$ such that
\begin{equation*}
\big\|f\big\|_{L^p(w)}:=\bigg(\int_{\mathbb R^d}\big|f(x)\big|^pw(x)\,dx\bigg)^{1/p}<\infty.
\end{equation*}
We also denote by $WL^1(w)$ the weighted weak Lebesgue space consisting of all measurable functions $f$ for which
\begin{equation*}
\big\|f\big\|_{WL^1(w)}:=
\sup_{\lambda>0}\lambda\cdot w\big(\big\{x\in\mathbb R^d:|f(x)|>\lambda\big\}\big)<\infty.
\end{equation*}

Recently, Bongioanni et al.\cite{bong1} obtained weighted strong-type and weak-type estimates for the operators $\mathcal R$ and $\mathcal R^{\ast}$ defined in \eqref{riesz} and \eqref{riesz2}. Their results can be summarized as follows:

\begin{thm}[\cite{bong1}]\label{strong}
Let $1<p<\infty$ and $w\in A^{\rho,\infty}_p$. If $V\in RH_q$ with $q\geq d$, then the operators $\mathcal R$ and $\mathcal R^{\ast}$ are all bounded on $L^p(w)$.
\end{thm}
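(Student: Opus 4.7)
The strategy exploits the fact that $\mathcal{R}$ and $\mathcal{R}^{\ast}$ are Calder\'on--Zygmund operators whose kernels, by Lemma \ref{kernel}, enjoy the extra decay factor $(1+|x-y|/\rho(x))^{-N}$ for arbitrarily large $N$. This surplus decay is what allows one to work with the class $A^{\rho,\infty}_p$, which is strictly larger than $A_p$. I would reduce the problem to two standard pieces of machinery by proving a pointwise sharp--function estimate
\[
M^{\sharp}_{\rho,\eta}(\mathcal{R}f)(x) \;\leq\; C\, M_{\rho,\theta'}f(x),
\]
and then combining it with (a) a Fefferman--Stein inequality $\|f\|_{L^p(w)} \leq C\|M^{\sharp}_{\rho,\eta}f\|_{L^p(w)}$ valid for $w\in A^{\rho,\infty}_p$, and (b) the $L^p(w)$-boundedness of the maximal operator $M_{\rho,\theta'}$ on $A^{\rho,\infty}_p$-weights. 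Both (a) and (b) are known results of Bongioanni--Harboure--Salinas-type, ultimately relying on Lemma \ref{rh} and Lemma \ref{comparelem} to mimic the classical $A_p$ arguments.

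Here $M^{\sharp}_{\rho,\eta}$ is the sharp maximal function adapted to the critical radius function: on ``subcritical'' balls ($r_B \leq \rho(x_B)$) it is the standard oscillation $\tfrac{1}{|B|}\int_B|f-f_B|\,dy$, while on ``supercritical'' balls ($r_B > \rho(x_B)$) the mean is replaced by $0$, and the whole quantity is weighted by $(1+r_B/\rho(x_B))^{-\eta}$. The heart of the argument is the pointwise bound on $M^{\sharp}_{\rho,\eta}(\mathcal R f)$, which I would establish by a case analysis on a fixed ball $B=B(x_0,r)$.

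In the subcritical case $r\leq\rho(x_0)$, split $f = f\chi_{2B} + f\chi_{(2B)^c}$. For the local piece, apply Shen's unweighted $L^2$-boundedness together with H\"older's inequality to dominate the oscillation by the Hardy--Littlewood maximal function $Mf(x)$. For the nonlocal piece, use the smoothness of the kernel $\mathcal K$ (the standard CZ H\"ormander-type condition, which is part of Shen's theory although not explicitly in Lemma \ref{kernel}) and split the complement of $2B$ into dyadic annuli to obtain again a bound by $Mf(x)$. In the supercritical case $r>\rho(x_0)$, the mean $f_{B,\rho}$ is zero, so one only needs to bound the plain average $\tfrac{1}{|B|}\int_B|\mathcal R f(y)|\,dy$ times the factor $(1+r/\rho(x_0))^{-\eta}$. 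Writing $\mathbb R^d = \bigcup_{k\geq 0}(2^{k+1}B\setminus 2^kB)$ and inserting the size estimate of Lemma \ref{kernel}, each annular contribution picks up the factor $(1+2^kr/\rho(y))^{-N}$, which, by \eqref{com2} of Lemma \ref{N0}, transfers (up to a controlled power of $(1+r/\rho(x_0))$) into decay in $\rho(x_0)$. Summing the resulting geometric series yields a bound by $M_{\rho,\theta'}f(x)$.

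The main obstacle is precisely the supercritical case: the $A^{\rho,\theta}_p$-condition allows the weight integrals to grow like $(1+r/\rho(x_0))^{\theta}$, which must be absorbed against the kernel decay. The resolution is structural and is why the theorem holds in this generality: the exponent $N$ in Lemma \ref{kernel} can be chosen as large as one wishes, depending on $\theta$ and on the intrinsic exponent $N_0$ from Lemma \ref{N0}, so that after applying \eqref{com2} the resulting series in $k$ converges absolutely. Once the pointwise sharp--function bound is in hand, the $L^p(w)$-estimate for $\mathcal R$ follows by chaining the Fefferman--Stein inequality with the boundedness of $M_{\rho,\theta'}$. Since the kernel estimates in Lemma \ref{kernel} are perfectly symmetric in $\mathcal R$ and $\mathcal R^{\ast}$, the argument transfers verbatim to $\mathcal R^{\ast}$.
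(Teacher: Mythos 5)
The paper does not prove Theorem \ref{strong}: it is stated as a known result of Bongioanni, Harboure and Salinas \cite{bong1} and is used as a black box in the proofs of Theorems \ref{mainthm:1}--\ref{mainthm:4}, so there is no internal proof to compare your attempt against. Your sketch does, however, follow the same circle of ideas as the proof in \cite{bong1}: an adapted sharp--maximal function distinguishing subcritical from supercritical balls, a Fefferman--Stein inequality in the $A^{\rho,\infty}_p$ setting, and the $L^p(w)$-boundedness of the localized maximal operator $M_{\rho,\theta}$. The mechanism you single out for the supercritical case --- the arbitrarily large $N$ in Lemma \ref{kernel} absorbing the polynomial growth $(1+r/\rho(x_0))^{\theta}$ permitted by the $A^{\rho,\theta}_p$ condition, with \eqref{com2} used to trade $\rho(y)$ for $\rho(x_0)$ at the cost of a harmless power --- is precisely the engine that makes the theory work in this generality, and the paper's own proofs of Theorems \ref{mainthm:1} and \ref{mainthm:2} run on exactly this fuel.

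Two caveats. First, as written this is a plan rather than a proof: items (a) and (b), together with the H\"ormander-type regularity of $\mathcal K$ and $\mathcal K^{\ast}$ (which, as you correctly note, is in Shen \cite{shen} but not in Lemma \ref{kernel}), carry essentially all of the technical weight, and each is roughly as deep as Theorem \ref{strong} itself; in \cite{bong1} they rest on a covering of $\mathbb R^d$ by critical balls with bounded overlap and on the observation that on a subcritical ball an $A^{\rho,\theta}_p$ weight behaves like a genuine $A_p$ weight, which is what Lemmas \ref{rh} and \ref{comparelem} encode. Second, in the supercritical case the local piece $\frac{1}{|B|}\int_B |\mathcal R(f\chi_{2B})|$ still needs the unweighted $L^2$ bound plus H\"older --- the size estimate of Lemma \ref{kernel} cannot control the kernel when $x$ and $y$ are both in $2B$ and close together --- so the split into a local and a tail part is required in both regimes, not only the subcritical one, and the decay factor produced by Lemma \ref{kernel} reads $(1+|x-y|/\rho(x))^{-N}$ rather than the $(1+2^kr/\rho(y))^{-N}$ you wrote. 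Neither point is fatal; they are simply the places where the sketch would have to be filled in to become a proof.
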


\begin{thm}[\cite{bong1}]\label{weak}
Let $p=1$ and $w\in A^{\rho,\infty}_1$. If $V\in RH_q$ with $q\geq d$, then the operators $\mathcal R$ and $\mathcal R^{\ast}$ are all bounded from $L^1(w)$ into $WL^1(w)$.
\end{thm}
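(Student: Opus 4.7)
The plan is to follow the classical Calder\'on-Zygmund decomposition scheme, adapted to the $A^{\rho,\infty}_1$ framework. Since the kernels $\mathcal K$ and $\mathcal K^\ast$ satisfy identical pointwise bounds (Lemma \ref{kernel}) and analogous smoothness (because $\mathcal R$ and $\mathcal R^\ast$ are standard Calder\'on-Zygmund operators by Shen's work), it suffices to argue for $\mathcal R$. Fix $\lambda>0$ and, without loss of generality, $f\geq 0$. Apply the usual Calder\'on-Zygmund decomposition of $f$ at height $\lambda$ with respect to Lebesgue measure to obtain pairwise disjoint dyadic cubes $\{Q_j\}$ with $\lambda<|Q_j|^{-1}\int_{Q_j}f\leq 2^d\lambda$ and $f\leq\lambda$ a.e.\ off $\Omega=\bigcup_j Q_j$. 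Split $f=g+b$, where $g=f\chi_{\Omega^c}+\sum_j f_{Q_j}\chi_{Q_j}$ (so $\|g\|_\infty\leq 2^d\lambda$) and $b=\sum_j b_j$ with $b_j=(f-f_{Q_j})\chi_{Q_j}$, so that $\int b_j=0$. Then
\begin{equation*}
w\{x:|\mathcal R f(x)|>\lambda\}\leq w\{|\mathcal R g|>\lambda/2\}+w\{|\mathcal R b|>\lambda/2\}.
\end{equation*}

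For the \emph{good} part, since $w\in A^{\rho,\infty}_1\subset A^{\rho,\infty}_2$, Theorem \ref{strong} with $p=2$ yields the $L^2(w)$-boundedness of $\mathcal R$, and Chebyshev reduces matters to
\begin{equation*}
\|g\|_{L^2(w)}^2\leq C\lambda\,\|g\|_{L^1(w)}\leq C\lambda\,\|f\|_{L^1(w)}.
\end{equation*}
The second inequality amounts to controlling $\sum_j f_{Q_j}w(Q_j)$. By the $A^{\rho,\theta}_1$ condition, $|Q_j|^{-1}w(Q_j)\leq C(1+r_j/\rho(x_j))^\theta\,\mathrm{ess\,inf}_{Q_j}w$, and I would absorb this growth factor by splitting the cubes into those with $r_j\leq\rho(x_j)$ (where the factor is bounded by $2^\theta$) and those with $r_j>\rho(x_j)$ (where the characterization $M_{\rho,\theta}w\leq Cw$ supplies the required summability).

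For the \emph{bad} part, write $\Omega^\ast=\bigcup_j 2Q_j$. The inequality
\begin{equation*}
w(\Omega^\ast)\leq\sum_j w(2Q_j)\leq C\lambda^{-1}\,\|f\|_{L^1(w)}
\end{equation*}
follows from the doubling-type property of $A^{\rho,\infty}_1$ (again paying a factor $(1+r_j/\rho(x_j))^\theta$, handled as above) combined with $\int_{Q_j}f>\lambda|Q_j|$. On $(\Omega^\ast)^c$ the cancellation $\int b_j=0$ gives
\begin{equation*}
|\mathcal R b_j(x)|\leq\int_{Q_j}|\mathcal K(x,y)-\mathcal K(x,x_j)|\,|b_j(y)|\,dy,
\end{equation*}
which I would integrate against $w$ over the annular shells $2^{k+1}Q_j\setminus 2^kQ_j$, using the Calder\'on-Zygmund regularity of $\mathcal K$ together with Lemma \ref{comparelem} to control $w$ on each shell.

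The main obstacle is the very feature that distinguishes $A^{\rho,\infty}_1$ from classical $A_1$: the growth factor $(1+r/\rho)^\theta$ appearing in the weight condition and in the $A_\infty$-type comparison \eqref{compare}. For cubes with $r_j\ll\rho(x_j)$ the classical scheme runs verbatim; for cubes with $r_j\gtrsim\rho(x_j)$ the pointwise kernel regularity alone cannot absorb the weight growth. In that regime I would abandon cancellation and instead exploit the stronger decay $(1+|x-y|/\rho(x))^{-N}$ of Lemma \ref{kernel}, with $N$ chosen large relative to $\theta$, $N_0$ and the exponent $\eta$ produced by Lemma \ref{comparelem}; the comparison \eqref{com2} is then the essential tool for transferring $\rho(x)$-based decay to $\rho(y)$-based decay so that the resulting series over dyadic annuli converges.
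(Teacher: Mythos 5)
The paper does not prove Theorem \ref{weak}; it is quoted directly from Bongioanni--Harboure--Salinas \cite{bong1} with no argument given here, so there is no in-paper proof to compare your proposal against. Judged on its own, your outline follows a sensible template, but the step you yourself flag as delicate is a genuine gap, and the fix you offer does not close it.

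The estimates $\|g\|_{L^1(w)}\leq C\|f\|_{L^1(w)}$ and $\sum_j w(2Q_j)\leq C\lambda^{-1}\|f\|_{L^1(w)}$ both require, for each CZ cube $Q_j$, an inequality of the form $f_{Q_j}\,w(Q_j)\leq C\int_{Q_j}f\,w$, and via $A^{\rho,\theta}_1$ this comes with a factor $(1+r_j/\rho(x_j))^{\theta}$. The classical stopping-time construction imposes no relation between $r_j$ and $\rho(x_j)$, so these factors are not uniformly bounded over the family of CZ cubes. Your proposed remedy for cubes with $r_j>\rho(x_j)$ --- appealing to ``the characterization $M_{\rho,\theta}w\leq Cw$'' --- is circular: that pointwise inequality is exactly the $A^{\rho,\theta}_1$ condition and reproduces the same unbounded growth factor; it supplies no summability. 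Nor can the extra kernel decay $(1+|x-y|/\rho(x))^{-N}$ of Lemma \ref{kernel} rescue the good-part estimate, since that estimate concerns only the weight and the averages of $f$ and makes no reference to the operator at all.

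What makes the argument of \cite{bong1} work is localization to critical balls. One covers $\mathbb{R}^d$ by a countable family of balls $B(x_k,\rho(x_k))$ with bounded overlap (possible by \eqref{com}); on any such ball, an $A^{\rho,\theta}_1$ weight is comparable to a genuine $A_1$ weight because $(1+r/\rho)^\theta$ is bounded when $r\lesssim\rho$; so the CZ decomposition is run \emph{locally}, and every stopping cube automatically satisfies $r_j\lesssim\rho(x_j)$, killing the problematic factor. Interactions across different critical balls are then handled not by cancellation but by the strong decay $(1+|x-y|/\rho(x))^{-N}$ together with \eqref{com2}, choosing $N$ large relative to $N_0$, $\theta$, and the exponent $\eta$ from Lemma \ref{comparelem}. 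Without constraining the stopping cubes to live at scales at most comparable to $\rho$, the global CZ scheme you sketched cannot be made to close.
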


\subsection{The space $\mathrm{BMO}_{\rho,\infty}(\mathbb R^d)$}
We denote by $\mathcal T$ either $\mathcal R$ or $\mathcal R^{\ast}$. For a locally integrable function $b$ on $\mathbb R^d$ (usually called the \emph{symbol}), we will also consider the commutator operator
\begin{equation}\label{briesz}
[b,\mathcal T]f(x):=b(x)\cdot \mathcal Tf(x)-\mathcal T(bf)(x),\quad x\in\mathbb R^d.
\end{equation}
Recently, Bongioanni et al. \cite{bong3} introduced a new space $\mathrm{BMO}_{\rho,\infty}(\mathbb R^d)$ defined by
\begin{equation*}
\mathrm{BMO}_{\rho,\infty}(\mathbb R^d):=\bigcup_{\theta>0}\mathrm{BMO}_{\rho,\theta}(\mathbb R^d),
\end{equation*}
where for $0<\theta<\infty$ the space $\mathrm{BMO}_{\rho,\theta}(\mathbb R^d)$ is defined to be the set of all locally integrable functions $b$ satisfying
\begin{equation}\label{BM}
\frac{1}{|B(x_0,r)|}\int_{B(x_0,r)}\big|b(x)-b_{B(x_0,r)}\big|\,dx
\leq C\cdot\left(1+\frac{r}{\rho(x_0)}\right)^{\theta},
\end{equation}
for all $x_0\in\mathbb R^d$ and $r>0$, $b_{B(x_0,r)}$ denotes the mean value of $b$ on $B(x_0,r)$, that is,
\begin{equation*}
b_{B(x_0,r)}:=\frac{1}{|B(x_0,r)|}\int_{B(x_0,r)}b(y)\,dy.
\end{equation*}
A norm for $b\in \mathrm{BMO}_{\rho,\theta}(\mathbb R^d)$, denoted by $\|b\|_{\mathrm{BMO}_{\rho,\theta}}$, is given by the infimum of the constants satisfying \eqref{BM}, or equivalently,
\begin{equation*}
\|b\|_{\mathrm{BMO}_{\rho,\theta}}
:=\sup_{B(x_0,r)}\left(1+\frac{r}{\rho(x_0)}\right)^{-\theta}\bigg(\frac{1}{|B(x_0,r)|}\int_{B(x_0,r)}\big|b(x)-b_{B(x_0,r)}\big|\,dx\bigg),
\end{equation*}
where the supremum is taken over all balls $B(x_0,r)$ with $x_0\in\mathbb R^d$ and $r>0$.
With the above definition in mind, one has
\begin{equation*}
\mathrm{BMO}(\mathbb R^d)\subset \mathrm{BMO}_{\rho,\theta_1}(\mathbb R^d)\subset \mathrm{BMO}_{\rho,\theta_2}(\mathbb R^d)
\end{equation*}
for $0<\theta_1<\theta_2<\infty$, and hence $\mathrm{BMO}(\mathbb R^d)\subset\mathrm{BMO}_{\rho,\infty}(\mathbb R^d)$. Moreover, the classical BMO space \cite{john} is properly contained in $\mathrm{BMO}_{\rho,\infty}(\mathbb R^d)$ (see \cite{bong2,bong3} for some examples).
We need the following key result for $\mathrm{BMO}_{\rho,\theta}(\mathbb R^d)$, which was proved by Tang in \cite{tang}.
\begin{prop}[\cite{tang}]\label{tangbmo}
Let $b\in\mathrm{BMO}_{\rho,\theta}(\mathbb R^d)$ with $0<\theta<\infty$. Then there exist two positive constants $C_1$ and $C_2$ such that for any given ball $B(x_0,r)$ in $\mathbb R^d$ and for any $\lambda>0$, we have
\begin{equation}\label{tang}
\begin{split}
&\big|\big\{x\in B(x_0,r):|b(x)-b_{B(x_0,r)}|>\lambda\big\}\big|\\
&\leq C_1|B(x_0,r)|
\exp\bigg[-\bigg(1+\frac{r}{\rho(x_0)}\bigg)^{-\theta^{\ast}}\frac{C_2 \lambda}{\|b\|_{\mathrm{BMO}_{\rho,\theta}}}\bigg],
\end{split}
\end{equation}
where $\theta^{\ast}=(N_0+1)\theta$ and $N_0$ is the constant appearing in Lemma \ref{N0}.
\end{prop}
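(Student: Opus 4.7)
The plan is to adapt the classical Calderón--Zygmund stopping--time proof of the John--Nirenberg inequality, carefully tracking how the ``localized BMO constant'' on a subball inflates with respect to the parent ball $B_{0}=B(x_{0},r)$. The key observation is that the exponent $\theta^{\ast}=(N_{0}+1)\theta$ is precisely the price of upgrading the factor $(1+r_{j}/\rho(x_{j}))^{\theta}$ on a subball $B_{j}=B(x_{j},r_{j})\subset B_{0}$ to a fixed parent--ball quantity via Lemma \ref{N0}.

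First I would establish a \emph{uniformization lemma}: for any $B_{j}=B(x_{j},r_{j})\subset B_{0}$, the fact that $|x_{0}-x_{j}|\leq r$ together with \eqref{com} yields $\rho(x_{0})/\rho(x_{j})\leq C(1+r/\rho(x_{0}))^{N_{0}}$; multiplying by $r_{j}/\rho(x_{0})\leq r/\rho(x_{0})$ gives $(1+r_{j}/\rho(x_{j}))^{\theta}\leq C\,\Psi$, where $\Psi:=(1+r/\rho(x_{0}))^{\theta^{\ast}}$. Combined with \eqref{BM} this produces a \emph{scale--free} BMO--type bound
\[
\frac{1}{|B_{j}|}\int_{B_{j}}|b-b_{B_{j}}|\,dx\leq C\,\|b\|_{\mathrm{BMO}_{\rho,\theta}}\,\Psi \qquad\text{for every } B_{j}\subset B_{0},
\]
whose right-hand side no longer depends on $B_{j}$; this is what enables the iteration to close. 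Next, set $\alpha:=A\,C\,\|b\|_{\mathrm{BMO}_{\rho,\theta}}\,\Psi$ for a constant $A>1$ to be chosen. I would perform a Calderón--Zygmund decomposition of $|b-b_{B_{0}}|$ on $B_{0}$ at level $\alpha$, producing disjoint subballs $\{B_{j}^{(1)}\}\subset B_{0}$ with $\sum_{j}|B_{j}^{(1)}|\leq A^{-1}|B_{0}|$, $|b_{B_{j}^{(1)}}-b_{B_{0}}|\leq c_{d}\alpha$, and $|b-b_{B_{0}}|\leq\alpha$ a.e.\ off $\bigcup_{j}B_{j}^{(1)}$; then iterate this decomposition on each $B_{j}^{(1)}$, with the mean shifted to $b_{B_{j}^{(1)}}$ but the \emph{same} threshold $\alpha$ (legitimate thanks to the uniformization). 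After $m$ iterations this produces a disjoint family $\mathcal{F}_{m}$ with $\sum_{B\in\mathcal{F}_{m}}|B|\leq A^{-m}|B_{0}|$ and $|b-b_{B_{0}}|\leq m(c_{d}+1)\alpha$ a.e.\ off $\bigcup_{B\in\mathcal{F}_{m}}B$. Given $\lambda>0$, choosing $m=\lfloor\lambda/((c_{d}+1)\alpha)\rfloor$ bounds the distribution set by $A^{-m}|B_{0}|=|B_{0}|e^{-m\log A}$, and substituting back the value of $\alpha$ converts this into the claimed inequality \eqref{tang} with the correct factor $(1+r/\rho(x_{0}))^{-\theta^{\ast}}$ in the exponent.

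The main obstacle---and the reason $\theta^{\ast}$ is forced to inflate to $(N_{0}+1)\theta$---lies in the uniformization step: the stopping--time subballs produced by the Calderón--Zygmund decomposition have arbitrary centers $x_{j}$ and radii $r_{j}$ inside $B_{0}$, and the raw BMO condition on each of them would introduce a fluctuating factor $(1+r_{j}/\rho(x_{j}))^{\theta}$ that would preclude any geometric recursion. Lemma \ref{N0} is exactly what locks all these factors under a single umbrella $\Psi$, after which the exponential decay follows from the classical John--Nirenberg iteration scheme essentially verbatim.
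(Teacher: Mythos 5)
The paper does not prove this proposition; it is quoted from Tang \cite{tang}, so there is no internal argument to compare against. Your sketch is nevertheless correct and almost certainly tracks the intended proof. The crucial nonclassical ingredient is your uniformization lemma, and the computation checks out: if $B(x_{j},r_{j})\subset B(x_{0},r)$ then $|x_{0}-x_{j}|\leq r$ and $r_{j}\leq r$; the lower bound in \eqref{com} with $x=x_{0}$, $y=x_{j}$ yields $\rho(x_{0})/\rho(x_{j})\leq C(1+r/\rho(x_{0}))^{N_{0}}$, hence
\[
\frac{r_{j}}{\rho(x_{j})}=\frac{\rho(x_{0})}{\rho(x_{j})}\cdot\frac{r_{j}}{\rho(x_{0})}
\leq C\Big(1+\frac{r}{\rho(x_{0})}\Big)^{N_{0}}\frac{r}{\rho(x_{0})}
\leq C\Big(1+\frac{r}{\rho(x_{0})}\Big)^{N_{0}+1},
\]
and raising to the power $\theta$ gives $(1+r_{j}/\rho(x_{j}))^{\theta}\leq C(1+r/\rho(x_{0}))^{\theta^{\ast}}$ with exactly $\theta^{\ast}=(N_{0}+1)\theta$. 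From this point on the iteration is the classical John--Nirenberg argument applied with the inflated constant $C\|b\|_{\mathrm{BMO}_{\rho,\theta}}\Psi$, which is now uniform over all subballs of $B_{0}$, so the stopping-time recursion closes at the fixed threshold $\alpha=A\,C\|b\|_{\mathrm{BMO}_{\rho,\theta}}\Psi$ and delivers the exponential decay with the factor $\Psi^{-1}=(1+r/\rho(x_{0}))^{-\theta^{\ast}}$, matching \eqref{tang}. The one detail you gloss over---a routine one---is that the Calder\'on--Zygmund decomposition produces dyadic subcubes, not balls; one should either run the iteration on a cube circumscribing $B_{0}$ and invoke the standard comparability of ball-averaged and cube-averaged oscillations, or observe that the centers and scales of the stopping cubes are all still controlled by $r$, so the same uniformization estimate applies after adjusting constants. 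With that caveat the proposal is sound.
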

As a consequence of Proposition \ref{tangbmo} and Lemma \ref{comparelem}, we have the following result:
\begin{prop}\label{wangbmo}
Let $b\in\mathrm{BMO}_{\rho,\theta}(\mathbb R^d)$ with $0<\theta<\infty$ and $w\in A^{\rho,\infty}_p$ with $1\leq p<\infty$. Then there exist positive constants $C_1,C_2$ and $\eta>0$ such that for any given ball $B(x_0,r)$ in $\mathbb R^d$ and for any $\lambda>0$, we have
\begin{equation}\label{wang}
\begin{split}
&w\big(\big\{x\in B(x_0,r):|b(x)-b_{B(x_0,r)}|>\lambda\big\}\big)\\
&\leq C_1w\big(B(x_0,r)\big)
\exp\bigg[-\bigg(1+\frac{r}{\rho(x_0)}\bigg)^{-\theta^{\ast}}\frac{C_2 \lambda}{\|b\|_{\mathrm{BMO}_{\rho,\theta}}}\bigg]
\left(1+\frac{r}{\rho(x_0)}\right)^{\eta},
\end{split}
\end{equation}
where $\theta^{\ast}=(N_0+1)\theta$ and $N_0$ is the constant appearing in Lemma \ref{N0}.
\end{prop}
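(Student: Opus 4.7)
The plan is to combine Proposition~\ref{tangbmo} (the John--Nirenberg style estimate for $\mathrm{BMO}_{\rho,\theta}$ in Lebesgue measure) with the $A^{\rho,\infty}_p$ comparability from Lemma~\ref{comparelem}, which lets us pass from $|E|/|B|$ to $w(E)/w(B)$ at the cost of a power and an extra $(1+r/\rho(x_0))^{\eta}$ factor.

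More concretely, fix a ball $B=B(x_0,r)$ and $\lambda>0$, and set
\[
E_\lambda:=\bigl\{x\in B(x_0,r):|b(x)-b_{B(x_0,r)}|>\lambda\bigr\},
\]
which is a measurable subset of $B$. Since $w\in A^{\rho,\infty}_p$, there is some $\theta'>0$ with $w\in A^{\rho,\theta'}_p$, and Lemma~\ref{comparelem} then yields constants $\delta,\eta>0$ (depending only on $w$) such that
\[
\frac{w(E_\lambda)}{w(B)}\leq C\left(\frac{|E_\lambda|}{|B|}\right)^{\delta}\left(1+\frac{r}{\rho(x_0)}\right)^{\eta}.
\]
On the other hand, Proposition~\ref{tangbmo} gives
\[
\frac{|E_\lambda|}{|B|}\leq C_1\exp\!\left[-\left(1+\frac{r}{\rho(x_0)}\right)^{-\theta^{\ast}}\frac{C_2\lambda}{\|b\|_{\mathrm{BMO}_{\rho,\theta}}}\right].
\]

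The remainder is pure book-keeping: raise the Tang estimate to the $\delta$-th power, observe that the prefactor $C_1^{\delta}$ is a harmless new constant and that the exponent is multiplied by the positive constant $\delta$, which can be absorbed into $C_2$ (relabelling $\delta C_2$ as a new $C_2$). Multiplying by $w(B)$ and carrying the factor $(1+r/\rho(x_0))^{\eta}$ through, we arrive at exactly \eqref{wang} with new positive constants $C_1, C_2$ and the same $\theta^{\ast}=(N_0+1)\theta$.

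There is no real obstacle here; the only point that deserves a line of care is ensuring that the parameter $\theta^{\ast}$ in the exponential is preserved under these manipulations (multiplying a positive exponent by the fixed positive number $\delta$ does not change the shape of the bound, only the constant $C_2$), and that the $\eta$ supplied by Lemma~\ref{comparelem} is independent of $\lambda$ so that it appears as a single multiplicative factor in the final display.
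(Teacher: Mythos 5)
Your argument is correct and is precisely the route the paper intends: the paper gives no separate proof of Proposition~\ref{wangbmo} but introduces it with ``As a consequence of Proposition~\ref{tangbmo} and Lemma~\ref{comparelem},'' which is exactly your combination of the Tang level-set estimate with the weighted comparability inequality. The bookkeeping you flag (raising the Tang bound to the power $\delta$ and absorbing $\delta$ into the constant $C_2$ without disturbing $\theta^{\ast}$) is the only point requiring care, and you handle it correctly.
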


\subsection{Orlicz spaces}
In this subsection, let us give the definition and some basic facts about Orlicz spaces. For more information on this subject, the reader may consult the book \cite{rao}. Recall that a function $\mathcal A:[0,\infty)\rightarrow[0,\infty)$ is called a Young function if it is continuous, convex and strictly increasing with
\begin{equation*}
\mathcal A(0)=0\qquad  \& \qquad \lim_{t\to\infty}\mathcal A(t)\to\infty.
\end{equation*}
An important example of Young function is $\mathcal A(t)=t\cdot(1+\log^+t)^m$ with some $1\leq m<\infty$. Given a Young function $\mathcal A$ and a function $f$ defined on a ball $B$, we consider the $\mathcal A$-average of a function $f$ given by the following Luxemburg norm:
\begin{equation*}
\big\|f\big\|_{\mathcal A,B}
:=\inf\left\{\lambda>0:\frac{1}{|B|}\int_B\mathcal A\left(\frac{|f(x)|}{\lambda}\right)dx\leq1\right\}.
\end{equation*}
Associated to each Young function $\mathcal A$, one can define its complementary function $\bar{\mathcal A}$ as follows:
\begin{equation*}
\bar{\mathcal A}(s):=\sup_{t>0}\big\{st-\mathcal A(t)\big\}.
\end{equation*}
Such a function $\bar{\mathcal A}$ is also a Young function. It is well known that the following \emph{generalized H\"older inequality} in Orlicz spaces holds for any given ball $B\subset\mathbb R^d$:
\begin{equation*}
\frac{1}{|B|}\int_B\big|f(x)\cdot g(x)\big|\,dx\leq 2\big\|f\big\|_{\mathcal A,B}\big\|g\big\|_{\bar{\mathcal A},B}.
\end{equation*}
In particular, for the Young function $\mathcal A(t)=t\cdot(1+\log^+t)$, the Luxemburg norm will be denoted by $\|\cdot\|_{L\log L,B}=\|\cdot\|_{\mathcal A,B}$. A simple computation shows that the complementary Young function of $\mathcal A(t)=t\cdot(1+\log^+t)$ is $\bar{\mathcal A}(t)=\exp(t)-1$. The corresponding Luxemburg norm will be denoted by $\|\cdot\|_{\exp L,B}=\|\cdot\|_{\bar{\mathcal A},B}$. In this situation, we have
\begin{equation}\label{holder}
\frac{1}{|B|}\int_B\big|f(x)\cdot g(x)\big|\,dx\leq 2\big\|f\big\|_{L\log L,B}\big\|g\big\|_{\exp L,B}.
\end{equation}
We next define the weighted $\mathcal A$-average of a function $f$ over a ball $B$. Given a Young function $\mathcal A$ and a weight function $w$, let (see \cite{rao} for instance)
\begin{equation*}
\big\|f\big\|_{\mathcal A(w),B}:=\inf\left\{\lambda>0:\frac{1}{w(B)}
\int_B\mathcal A\left(\frac{|f(x)|}{\lambda}\right)\cdot w(x)\,dx\leq1\right\}.
\end{equation*}
When $\mathcal A(t)=t$, we denote $\|\cdot\|_{L(w),B}=\|\cdot\|_{\mathcal A(w),B}$, and when $\Phi(t)=t\cdot(1+\log^+t)$, we denote $\|\cdot\|_{L\log L(w),B}=\|\cdot\|_{\Phi(w),B}$. Also, the complementary Young function of $\Phi$ is given by $\bar{\Phi}(t)=e^t-1$ with corresponding Luxemburg norm denoted by $\|\cdot\|_{\exp L(w),B}$. Given a weight $w$ on $\mathbb R^d$, we can also show the weighted version of \eqref{holder}. That is, the following generalized H\"older inequality in the weighted setting
\begin{equation}\label{gholder}
\frac{1}{w(B)}\int_B|f(x)\cdot g(x)|w(x)\,dx\leq C\big\|f\big\|_{L\log L(w),B}\big\|g\big\|_{\exp L(w),B}
\end{equation}
holds for every ball $B$ in $\mathbb R^d$. It is a simple but important observation that for any ball $B$ in $\mathbb R^d$,
\begin{equation*}
\big\|f\big\|_{L(w),B}\leq \big\|f\big\|_{L\log L(w),B}.
\end{equation*}
This is because $t\leq t\cdot(1+\log^+t)$ for all $t>0$. So we have
\begin{equation}\label{small}
\big\|f\big\|_{L(w),B}=\frac{1}{w(B)}\int_B|f(x)|\cdot w(x)\,dx\leq\big\|f\big\|_{L\log L(w),B}.
\end{equation}

In \cite{bong2}, Bongioanni et al.obtained weighted strong $(p,p)$, $1<p<\infty$, and weak $L\log L$ estimates for the commutators of the Riesz transform and its adjoint associated with the Schr\"odinger operator $\mathcal L=-\Delta+V$, where $V$ satisfies some reverse H\"older inequality. Their results can be summarized as follows:

\begin{thm}[\cite{bong2}]\label{cstrong}
Let $1<p<\infty$ and $w\in A^{\rho,\infty}_p$. If $V\in RH_q$ with $q\geq d$, then the commutator operators $[b,\mathcal R]$ and $[b,\mathcal R^{\ast}]$ are all bounded on $L^p(w)$, whenever $b$ belongs to $\mathrm{BMO}_{\rho,\infty}(\mathbb R^d)$.
\end{thm}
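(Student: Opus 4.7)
The plan is to adapt the classical Coifman--Rochberg--Weiss sharp-maximal-function argument to the Schr\"odinger setting, replacing the usual Hardy--Littlewood and sharp maximal operators by variants twisted by the critical radius $\rho$. Since the kernel estimates in Lemma~\ref{kernel} are symmetric, I will treat $\mathcal T=\mathcal R$ and $\mathcal T=\mathcal R^{\ast}$ in parallel. Pick any $b\in\mathrm{BMO}_{\rho,\theta}(\mathbb R^d)$ for some $\theta>0$. For an arbitrary ball $B=B(x_0,r)$ containing $x$, I start from the standard decomposition
$$[b,\mathcal T]f(x)=(b(x)-b_B)\,\mathcal Tf(x)-\mathcal T\bigl((b-b_B)f_1\bigr)(x)-\mathcal T\bigl((b-b_B)f_2\bigr)(x),$$
where $f_1:=f\chi_{4B}$ and $f_2:=f\chi_{(4B)^c}$.

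Next, I introduce the $\rho$-adapted sharp maximal operator
$$M^{\#}_{\rho,\theta_0}g(x):=\sup_{B\ni x}\Bigl(1+\tfrac{r}{\rho(x_0)}\Bigr)^{-\theta_0}\frac{1}{|B|}\int_{B}\bigl|g(y)-g_B\bigr|\,dy,$$
together with the $L\log L$-type maximal operator
$$M_{L\log L,\rho,\theta'}f(x):=\sup_{B\ni x}\Bigl(1+\tfrac{r}{\rho(x_0)}\Bigr)^{-\theta'}\|f\|_{L\log L,B}.$$
The main step is to prove the pointwise bound
$$M^{\#}_{\rho,\theta_0}\bigl([b,\mathcal T]f\bigr)(x)\le C\|b\|_{\mathrm{BMO}_{\rho,\theta}}\Bigl(M(\mathcal Tf)(x)+M_{L\log L,\rho,\theta'}f(x)\Bigr),$$
for suitable exponents $\theta_0,\theta'$ depending on $\theta$ and on the constant $N_0$ from Lemma~\ref{N0}. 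The first piece $(b(x)-b_B)\mathcal Tf(x)$ is handled by pulling out the $\mathrm{BMO}_{\rho,\theta}$-factor via \eqref{BM} and Proposition~\ref{tangbmo}, producing a copy of $M(\mathcal Tf)$. The local piece $\mathcal T((b-b_B)f_1)$ is controlled by Kolmogorov's inequality (using that $\mathcal T$ is of weak type $(1,1)$ since it is a Calder\'on--Zygmund operator) together with the generalized H\"older inequality \eqref{holder} and the $\exp L$ estimate $\|b-b_B\|_{\exp L,4B}\lesssim \|b\|_{\mathrm{BMO}_{\rho,\theta}}(1+r/\rho(x_0))^{\theta^{\ast}}$ coming from Proposition~\ref{tangbmo}; this produces $M_{L\log L,\rho,\theta'}f(x)$. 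The far-field piece $\mathcal T((b-b_B)f_2)(y)$ for $y\in B$ is decomposed into dyadic annuli $2^{k+1}B\setminus 2^k B$ with $k\ge 2$, and estimated by combining the kernel decay from Lemma~\ref{kernel}, the inequality \eqref{com2}, and a telescoping bound for $|b_{2^{k+1}B}-b_B|$ derived from \eqref{BM}.

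Once the pointwise inequality is in place, I apply a weighted Fefferman--Stein estimate for the $\rho$-sharp maximal function on $A^{\rho,\infty}_p$ weights, which is a consequence of the reverse H\"older property \eqref{rholder} and follows the scheme developed in the same line of work; together with the $L^p(w)$ boundedness of the Hardy--Littlewood maximal operator on $A^{\rho,\infty}_p$ weights (Lemma~\ref{rh} yields the required self-improvement) and the analogous boundedness of $M_{L\log L,\rho,\theta'}$, this converts the pointwise bound into
$$\bigl\|[b,\mathcal T]f\bigr\|_{L^p(w)}\le C\|b\|_{\mathrm{BMO}_{\rho,\infty}}\|f\|_{L^p(w)},$$
where the $M(\mathcal Tf)$ contribution is controlled via Theorem~\ref{strong}.

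The principal obstacle is the sharp maximal estimate, specifically the far-field term. One must carefully balance three competing growth factors: the $\theta$-power growth in $|b_{2^{k+1}B}-b_B|$, the factor $(1+2^kr/\rho(x_0))^{\theta^{\ast}}$ arising from the average of $|b-b_B|$ on $2^{k+1}B$, and the loss $(1+r/\rho(x_0))^{N_0/(N_0+1)}$ coming from transferring $\rho(y)$ to $\rho(x_0)$ via \eqref{com2}. Choosing the integer $N$ in Lemma~\ref{kernel} much larger than $\theta$, $\theta^{\ast}$, $N_0/(N_0+1)$ and the $\eta$ from Lemma~\ref{rh}, one ensures the geometric series in $k$ converges with only an admissible factor $(1+r/\rho(x_0))^{\theta_0}$ on the left-hand side. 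After this balancing is achieved the remaining steps are relatively soft, fitting into the weighted Fefferman--Stein framework already established in the cited papers.
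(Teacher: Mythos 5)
This theorem is quoted from Bongioanni, Harboure and Salinas \cite{bong2}; the present paper does not reprove it and uses it as a black box inside the proof of Theorem~\ref{mainthm:3}. Your outline is a recognizable rendering of the sharp-maximal-function approach that \cite{bong2} in fact employs, but two points in it are incorrect and would stop a careful execution.

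First, the claimed pointwise bound with $M(\mathcal Tf)$ on the right is false. The first piece of your decomposition forces an estimate of $\frac{1}{|B|}\int_B|b(y)-b_B|\,|\mathcal Tf(y)|\,dy$; the factor $|b(y)-b_B|$ is only controlled in the average (or exponential-integrability) sense, never pointwise, so H\"older's inequality, whether in the Lebesgue or Orlicz form, inevitably produces $M_s(\mathcal Tf)(x)$ for some $s>1$, or $M_{L\log L}(\mathcal Tf)(x)$, and not the plain $M(\mathcal Tf)(x)$. The correct pointwise inequality should read
\begin{equation*}
M^{\#}_{\rho,\theta_0}\bigl([b,\mathcal T]f\bigr)(x)\le C\|b\|_{\mathrm{BMO}_{\rho,\theta}}\Bigl(M_s(\mathcal Tf)(x)+M_{L\log L,\rho,\theta'}f(x)\Bigr),\qquad s>1,
\end{equation*}
and the passage from $M_s$ back to an $L^p(w)$ bound then needs the open-endedness of the weight class, which is a separate step to verify.

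Second, and more seriously, the concluding step asserts ``the $L^p(w)$ boundedness of the Hardy--Littlewood maximal operator on $A^{\rho,\infty}_p$ weights (Lemma~\ref{rh} yields the required self-improvement).'' This is false: the global Hardy--Littlewood maximal operator is bounded on $L^p(w)$ if and only if $w\in A_p$, and $A^{\rho,\infty}_p$ is strictly larger than $A_p$. The same obstruction kills the classical global Fefferman--Stein inequality. The central difficulty of the Schr\"odinger-weighted theory is precisely that one must replace $M$, $M_s$, $M_{L\log L}$ and $M^{\#}$ by local versions (supremum restricted to balls of radius comparable to $\rho(x)$) or by $\rho$-twisted versions such as $M_{\rho,\theta}$, and then prove the Fefferman--Stein inequality and the maximal bounds in that localized form. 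In your sketch the sharp maximal is damped by $(1+r/\rho(x_0))^{-\theta_0}$, but the $M_s(\mathcal Tf)$ and $M_{L\log L,\rho,\theta'}f$ arising on the right are effectively untwisted on the $\mathcal Tf$ side, and you give no mechanism to control $\|M_s(\mathcal Tf)\|_{L^p(w)}$ when $w\notin A_p$; Theorem~\ref{strong} only yields $\|\mathcal Tf\|_{L^p(w)}\lesssim\|f\|_{L^p(w)}$, not the needed maximal estimate. Filling this in via localization is exactly the technical work carried out in \cite{bong2}, and your proposal does not address it.
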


\begin{thm}[\cite{bong2}]\label{cweak}
Let $p=1$ and $w\in A^{\rho,\infty}_1$. If $V\in RH_q$ with $q\geq d$ and $b\in\mathrm{BMO}_{\rho,\infty}(\mathbb R^d)$, then for any given $\lambda>0$, there exists a positive constant $C>0$ such that for those functions $f$ such that $\Phi(|f|)\in L^1(w)$,
\begin{equation*}
\begin{split}
&w\big(\big\{x\in\mathbb R^n:|[b,\mathcal R]f(x)|>\lambda\big\}\big)
\leq C\int_{\mathbb R^d}\Phi\left(\frac{|f(x)|}{\lambda}\right)\cdot w(x)\,dx,\\
&w\big(\big\{x\in\mathbb R^n:|[b,\mathcal R^{\ast}]f(x)|>\lambda\big\}\big)
\leq C\int_{\mathbb R^d}\Phi\left(\frac{|f(x)|}{\lambda}\right)\cdot w(x)\,dx,
\end{split}
\end{equation*}
where $\Phi(t)=t\cdot(1+\log^+t)$ and $\log^+t:=\max\{\log t,0\};$ that is,
\begin{equation*}
\log^+t=
\begin{cases}
\log t,\  &\mbox{as}~~t>1;\\
0,\  &\mbox{otherwise}.\\
\end{cases}
\end{equation*}
\end{thm}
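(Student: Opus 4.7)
The plan is to adapt P\'erez's classical argument for the weak endpoint $L\log L$ estimate of BMO commutators of Calder\'on--Zygmund operators to the Schr\"odinger setting. Since $\mathcal R$ and $\mathcal R^{\ast}$ have kernels with identical size estimates (Lemma \ref{kernel}), I would concentrate on $\mathcal T = \mathcal R$; the adjoint case is entirely parallel.

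First, by homogeneity I would reduce to $\lambda = 1$ and perform the standard Calder\'on--Zygmund decomposition of $|f|$ at height $1$ with respect to Lebesgue measure, obtaining pairwise disjoint balls $\{B_j = B(x_j, r_j)\}$ with $1 < |B_j|^{-1}\int_{B_j}|f| \leq 2^d$ and $|f| \leq 1$ a.e.\ outside $\Omega := \bigcup_j B_j$. Split $f = g + h$ with good part $g = f\chi_{\Omega^c} + \sum_j f_{B_j}\chi_{B_j}$ and bad part $h = \sum_j h_j$, where $h_j = (f - f_{B_j})\chi_{B_j}$ has mean zero. The good part is controlled via Theorem \ref{cstrong}: pick any $1 < p < \infty$ (using $w \in A^{\rho,\infty}_1 \subset A^{\rho,\infty}_p$), observe that $|g| \leq C$ and $|g|^p \leq C\Phi(|f|)$ pointwise, and apply Chebyshev. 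The enlargement $\Omega^{\ast} := \bigcup_j 2B_j$ is handled via Lemma \ref{comparelem} together with the Calder\'on--Zygmund selection property $|B_j| \leq \int_{B_j}|f|$, producing $w(\Omega^{\ast}) \leq C\int\Phi(|f|)\,w$ after the growth factor $(1+r_j/\rho(x_j))^{\eta}$ is absorbed into the $A^{\rho,\infty}_1$ constant.

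For the bad part, on the complement of $\Omega^{\ast}$, decompose
\begin{equation*}
[b,\mathcal T]h_j(x) = \bigl(b(x) - b_{B_j}\bigr)\,\mathcal T h_j(x) - \mathcal T\bigl((b - b_{B_j})h_j\bigr)(x).
\end{equation*}
For the first summand I would use the cancellation $\int h_j = 0$: writing $\mathcal T h_j(x) = \int_{B_j}[\mathcal K(x,y) - \mathcal K(x,x_j)]h_j(y)\,dy$ and applying the H\"older continuity of $\mathcal K$ (from \cite{shen,bong2}) together with the decay $(1+|x-y|/\rho(x))^{-N}$ from Lemma \ref{kernel}, the pointwise bound is a dyadic sum over annuli $2^k B_j \setminus 2^{k-1}B_j$. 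Integrating $|b(x)-b_{B_j}|$ against $w$ on each annulus via Proposition \ref{wangbmo}, the exponential decay of the BMO distribution combined with \eqref{com2} absorbs every polynomial growth factor $(1 + 2^k r_j/\rho(x_j))^{\eta}$ (assuming $N$ has been chosen sufficiently large), yielding a geometric series that sums to $Cw(B_j)$. For the second summand I would apply the weak $L^1(w)$ bound from Theorem \ref{weak} to $\mathcal T$, reducing matters to estimating $\int_{B_j}|b - b_{B_j}|\,|h_j|\,w$. The generalized H\"older inequality \eqref{gholder} and the exponential decay \eqref{wang} give $\|b - b_{B_j}\|_{\exp L(w),B_j} \leq C(1 + r_j/\rho(x_j))^{\theta^{\ast}}\|b\|_{\mathrm{BMO}_{\rho,\theta}}$, while $\|h_j\|_{L\log L(w),B_j}$ is majorized by an average of $\Phi(|f|)$ over $B_j$. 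Summing over $j$ produces the target bound $C\int\Phi(|f|)\,w$.

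The main obstacle is orchestrating the polynomial growth factors $(1 + r/\rho)^{\theta}$ that accumulate in every auxiliary tool (Lemma \ref{rh}, Proposition \ref{wangbmo}, the $A^{\rho,\infty}_p$ condition, and the $\exp L(w)$-norm of $b - b_{B_j}$) against the kernel decay from Lemma \ref{kernel}. BMO oscillations and weight averages are measured relative to $\rho$ at the center of the Calder\'on--Zygmund ball, whereas the kernel decay is expressed in terms of $\rho$ at the target point; reconciling the two via \eqref{com2} and choosing $N$ in Lemma \ref{kernel} to dominate every extraneous exponent is the decisive technical step that makes the whole scheme close up.
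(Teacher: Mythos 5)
This statement is cited from \cite{bong2}; the present paper does not reprove it, so I am assessing your argument on its own merits. Your Calder\'on--Zygmund/P\'erez scheme is the right general framework, and the treatment of the bad part off $\Omega^{\ast}$ is essentially sound: the mean-zero cancellation of $h_j$, the kernel smoothness plus the extra decay $(1+|x-y|/\rho(x))^{-N}$, the $\exp L(w)$/$L\log L(w)$ duality via \eqref{gholder} and Lemma \ref{BMO3}, and the transfer from $\rho(x)$ to $\rho(x_j)$ via \eqref{com2} all fit together as you describe.

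There is, however, a genuine gap in the good-part and $w(\Omega^{\ast})$ estimates, and it is exactly at the point you pass over with ``the growth factor $(1+r_j/\rho(x_j))^{\eta}$ is absorbed into the $A^{\rho,\infty}_1$ constant.'' Because you run the Calder\'on--Zygmund decomposition at height one with respect to Lebesgue measure, the stopping balls satisfy only $|B_j|<\int_{B_j}|f|$, which imposes no relation between $r_j$ and $\rho(x_j)$: the ratio $r_j/\rho(x_j)$ can be arbitrarily large (take $V(x)=|x|^2$, so $\rho(x)\approx(1+|x|)^{-1}$, and a CZ ball of moderate radius far from the origin). Consequently the factor $(1+r_j/\rho(x_j))^{\theta'}$ entering through the $A^{\rho,\theta'}_1$ condition, through \eqref{doubling2}, and through Lemma \ref{comparelem} is $j$-dependent and unbounded, so $\sum_j(1+r_j/\rho(x_j))^{\theta'}\int_{B_j}|f|\,w\,dx$ does \emph{not} reduce to $C\int\Phi(|f|)\,w\,dx$. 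Unlike the bad-part estimate --- where the kernel representation supplies the compensating decay $(1+2^kr_j/\rho)^{-N}$ --- the good part and $w(\Omega^{\ast})$ never touch the kernel, so there is nothing to play against the weight's growth. (The pointwise inequality $|g|^p\leq C\Phi(|f|)$ is also false on the balls $B_j$, where $g\equiv f_{B_j}$ is a nonzero constant while $\Phi(|f|)$ can vanish on a large subset of $B_j$; the usable bound is $|g|^p\leq C|g|$ followed by $\int|g|\,w\,dx$, which runs into the same obstruction.) To close the gap one must adapt the stopping time to the critical-radius geometry --- for instance, run the decomposition on the level set of the localized maximal operator $M_{\rho,\theta_0}$ for $\theta_0$ large, so the selection inequality becomes $|B_j|\leq(1+r_j/\rho(x_j))^{-\theta_0}\int_{B_j}|f|$ and the extra negative power cancels the weight's growth; or first cover $\mathbb R^d$ by a finite-overlap family of critical balls $B(x_i,\rho(x_i))$, argue locally (where every factor $(1+r/\rho)$ is $O(1)$ and the classical P\'erez proof applies verbatim), and use the kernel decay only for the global interactions.
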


In this paper, firstly, we will define some kinds of weighted Morrey spaces related to certain nonnegative potentials.
Secondly, we prove that the Riesz transform $\mathcal R$ and its adjoint $\mathcal R^{\ast}$ are both bounded operators on these new spaces. Finally, we also obtain the weighted boundedness for the commutators $[b,\mathcal R]$ and $[b,\mathcal R^{\ast}]$ defined in \eqref{briesz}.

Throughout this paper $C$ denotes a positive constant not necessarily the same at each occurrence, and a subscript is added when we wish to make
clear its dependence on the parameter in the subscript. We also use $a\approx b$ to denote the equivalence of $a$ and $b$; that is, there exist two positive constants $C_1$, $C_2$ independent of $a,b$ such that $C_1a\leq b\leq C_2a$.

\section{our main results}
In this section, we introduce some types of weighted Morrey spaces related to the potential $V$ and then give our main results.
\begin{defin}
Let $1\leq p<\infty$, $0\leq\kappa<1$ and $w$ be a weight. For given $0<\theta<\infty$, the weighted Morrey space $L^{p,\kappa}_{\rho,\theta}(w)$ is defined to be the set of all $L^p$ locally integrable functions $f$ on $\mathbb R^d$ for which
\begin{equation}\label{morrey1}
\bigg(\frac{1}{w(B)^{\kappa}}\int_B\big|f(x)\big|^pw(x)\,dx\bigg)^{1/p}
\leq C\cdot\left(1+\frac{r}{\rho(x_0)}\right)^{\theta}
\end{equation}
for every ball $B=B(x_0,r)$ in $\mathbb R^d$. A norm for $f\in L^{p,\kappa}_{\rho,\theta}(w)$, denoted by $\|f\|_{L^{p,\kappa}_{\rho,\theta}(w)}$, is given by the infimum of the constants in \eqref{morrey1}, or equivalently,
\begin{equation*}
\|f\|_{L^{p,\kappa}_{\rho,\theta}(w)}:=\sup_B\left(1+\frac{r}{\rho(x_0)}\right)^{-\theta}\bigg(\frac{1}{w(B)^{\kappa}}\int_B\big|f(x)\big|^pw(x)\,dx\bigg)^{1/p}
<\infty,
\end{equation*}
where the supremum is taken over all balls $B$ in $\mathbb R^d$, $x_0$ and $r$ denote the center and radius of $B$ respectively. Define
\begin{equation*}
L^{p,\kappa}_{\rho,\infty}(w):=\bigcup_{\theta>0}L^{p,\kappa}_{\rho,\theta}(w).
\end{equation*}
\end{defin}
Note that this definition does not coincide with the one given in \cite{pan} (see also \cite{tang2} for the unweighted case), but in view of the space $\mathrm{BMO}_{\rho,\infty}(\mathbb R^d)$ defined above it is more natural in our setting. Obviously, if we take $\theta=0$ or $V\equiv0$, then this new space is just the weighted Morrey space $L^{p,\kappa}(w)$, which was first defined by Komori and Shirai in \cite{komori} (see also \cite{wang1}).
\begin{defin}
Let $p=1$, $0\leq\kappa<1$ and $w$ be a weight. For given $0<\theta<\infty$, the weighted weak Morrey space $WL^{1,\kappa}_{\rho,\theta}(w)$ is defined to be the set of all measurable functions $f$ on $\mathbb R^d$ for which
\begin{equation*}
\frac{1}{w(B)^{\kappa}}\sup_{\lambda>0}\lambda\cdot w\big(\big\{x\in B:|f(x)|>\lambda\big\}\big)
\leq C\cdot\left(1+\frac{r}{\rho(x_0)}\right)^{\theta}
\end{equation*}
for every ball $B=B(x_0,r)$ in $\mathbb R^d$, or equivalently,
\begin{equation*}
\|f\|_{WL^{1,\kappa}_{\rho,\theta}(w)}:=\sup_B\left(1+\frac{r}{\rho(x_0)}\right)^{-\theta}\frac{1}{w(B)^{\kappa}}\sup_{\lambda>0}\lambda\cdot w\big(\big\{x\in B:|f(x)|>\lambda\big\}\big)
<\infty.
\end{equation*}
Correspondingly, we define
\begin{equation*}
WL^{1,\kappa}_{\rho,\infty}(w):=\bigcup_{\theta>0}WL^{1,\kappa}_{\rho,\theta}(w).
\end{equation*}
\end{defin}
Clearly, if we take $\theta=0$ or $V\equiv0$, then this space is just the weighted weak Morrey space $WL^{1,\kappa}(w)$ (see \cite{wang2}). According to the above definitions, one has
\begin{equation*}
\begin{cases}
L^{p,\kappa}(w)\subset L^{p,\kappa}_{\rho,\theta_1}(w)\subset L^{p,\kappa}_{\rho,\theta_2}(w);&\\
WL^{1,\kappa}(w)\subset WL^{1,\kappa}_{\rho,\theta_1}(w)\subset WL^{1,\kappa}_{\rho,\theta_2}(w).&
\end{cases}
\end{equation*}
for $0<\theta_1<\theta_2<\infty$. Hence $L^{p,\kappa}(w)\subset L^{p,\kappa}_{\rho,\infty}(w)$ for $(p,\kappa)\in[1,\infty)\times[0,1)$ and $WL^{1,\kappa}(w)\subset WL^{1,\kappa}_{\rho,\infty}(w)$ for $0\leq\kappa<1$.

The space $L^{p,\kappa}_{\rho,\theta}(w)$ (or $WL^{1,\kappa}_{\rho,\theta}(w)$) could be viewed as an extension of weighted (or weak) Lebesgue space (when $\kappa=\theta=0$). Naturally, one may ask the question whether the above conclusions (i.e., Theorems \ref{strong} and \ref{weak} as well as Theorems \ref{cstrong} and \ref{cweak}) still hold if replacing the weighted Lebesgue spaces by the weighted Morrey spaces. In this work, we give a positive answer to this question. Our main results in this work are presented as follows:
\begin{thm}\label{mainthm:1}
Let $1<p<\infty$, $0<\kappa<1$ and $w\in A^{\rho,\infty}_p$. If $V\in RH_q$ with $q\geq d$, then the operators $\mathcal R$ and $\mathcal R^{\ast}$ are both bounded on $L^{p,\kappa}_{\rho,\infty}(w)$.
\end{thm}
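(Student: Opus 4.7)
The plan is to prove Theorem \ref{mainthm:1} by a Calder\'on-Zygmund-type decomposition adapted to the weighted Morrey setting. Write $\mathcal T$ for either $\mathcal R$ or $\mathcal R^{\ast}$, fix $f\in L^{p,\kappa}_{\rho,\theta_f}(w)$ for some $\theta_f>0$, and assume $w\in A^{\rho,\theta_0}_p$ for some $\theta_0>0$. For each ball $B=B(x_0,r)$, I would split $f=f_1+f_2$ with $f_1=f\chi_{2B}$ and $f_2=f\chi_{(2B)^c}$ and handle the two pieces separately, the goal being to locate $\theta'>0$ (depending only on $p,\kappa,\theta_f,\theta_0,N_0,d$) such that
\[
\left(\frac{1}{w(B)^{\kappa}}\int_B|\mathcal Tf(x)|^p w(x)\,dx\right)^{1/p}\leq C\,\|f\|_{L^{p,\kappa}_{\rho,\theta_f}(w)}\left(1+\frac{r}{\rho(x_0)}\right)^{\theta'}.
\]
Taking the supremum over $B$ then places $\mathcal Tf$ in $L^{p,\kappa}_{\rho,\theta'}(w)\subset L^{p,\kappa}_{\rho,\infty}(w)$.

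For the local piece I would apply the weighted $L^p$-boundedness (Theorem \ref{strong}) to obtain $\|\mathcal Tf_1\|_{L^p(w)}\leq C\|f\chi_{2B}\|_{L^p(w)}$, and then use the Morrey condition on $2B$ to bound the right-hand side by $C\|f\|_{L^{p,\kappa}_{\rho,\theta_f}(w)}w(2B)^{\kappa/p}(1+2r/\rho(x_0))^{\theta_f}$. To trade $w(2B)$ for $w(B)$, I would prove a local doubling estimate of the form $w(2B)\leq C(1+r/\rho(x_0))^{p\theta_0}w(B)$ valid for $w\in A^{\rho,\theta_0}_p$; this follows from the $A^{\rho,\theta_0}_p$ inequality on $2B$ combined with the elementary bound $|B|\leq w(B)^{1/p}\bigl(\int_B w^{-p'/p}\bigr)^{1/p'}$ coming from H\"older's inequality.

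For the nonlocal piece I would exploit Lemma \ref{kernel}. Splitting $(2B)^c=\bigcup_{k\geq1}(2^{k+1}B\setminus 2^kB)$ and noting that $|x-y|\approx 2^kr$ for $x\in B$ and $y$ in the $k$-th annulus yields
\[
|\mathcal Tf_2(x)|\leq C_N\sum_{k=1}^{\infty}\frac{1}{(2^kr)^d}\left(1+\frac{2^kr}{\rho(x)}\right)^{-N}\int_{2^{k+1}B}|f(y)|\,dy.
\]
I would then pass from $\rho(x)$ to $\rho(x_0)$ using \eqref{com2} of Lemma \ref{N0} at the cost of a multiplicative factor $(1+r/\rho(x_0))^{NN_0/(N_0+1)}$; control $\int_{2^{k+1}B}|f|$ via H\"older, the Morrey condition on $2^{k+1}B$, and the $A^{\rho,\theta_0}_p$ inequality applied to $w^{-p'/p}$; and finally invoke Lemma \ref{comparelem} to bound $w(2^{k+1}B)^{(\kappa-1)/p}$ by $w(B)^{(\kappa-1)/p}$ times a geometric factor $2^{-(k+1)d\delta(1-\kappa)/p}$. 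The assumption $\kappa<1$ is essential here because it makes the exponent $(\kappa-1)/p$ negative and turns Lemma \ref{comparelem} into a decaying comparison.

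The main obstacle will be the bookkeeping: each of the three applications above (Morrey on $2^{k+1}B$, $A^{\rho,\theta_0}_p$ on $w^{-p'/p}$, and Lemma \ref{comparelem}) contributes an extra positive power of $(1+2^{k+1}r/\rho(x_0))$ inside the summand. Choosing the integer $N$ in Lemma \ref{kernel} larger than the total accumulated exponent, the factor $(1+2^{k+1}r/\rho(x_0))^{-N+\cdots}$ drops below $1$, and the remaining series $\sum_{k\geq1}2^{-(k+1)d\delta(1-\kappa)/p}$ converges. Integrating $|\mathcal Tf_2|^p$ against $w$ over $B$, the product $w(B)^{(\kappa-1)/p}\cdot w(B)^{1/p}=w(B)^{\kappa/p}$ absorbs the Morrey normalization, leaving the desired bound with $\theta'$ essentially equal to $NN_0/(N_0+1)$ plus the contribution $\theta_f+\kappa\theta_0$ from the local piece. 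Since $\mathcal R$ and $\mathcal R^{\ast}$ share the same kernel estimates, the argument treats both simultaneously.
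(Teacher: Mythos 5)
Your proposal matches the paper's proof essentially step for step: the same local/nonlocal splitting $f=f\chi_{2B}+f\chi_{(2B)^c}$, the same use of Theorem \ref{strong} together with an $A^{\rho,\theta}_p$-doubling estimate for the local piece, and the same annular decomposition of the tail via Lemma \ref{kernel}, \eqref{com2}, H\"older together with the $A^{\rho,\theta}_p$ condition, and Lemma \ref{comparelem}, with $N$ taken large enough to absorb the accumulated powers of $(1+2^{k+1}r/\rho(x_0))$. The exponent bookkeeping you sketch is exactly that of the paper; calling the local/global split a Calder\'on--Zygmund decomposition is a slight misnomer, but the argument is the same.
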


\begin{thm}\label{mainthm:2}
Let $p=1$, $0<\kappa<1$ and $w\in A^{\rho,\infty}_1$. If $V\in RH_q$ with $q\geq d$, then the operators $\mathcal R$ and $\mathcal R^{\ast}$ are both bounded from $L^{1,\kappa}_{\rho,\infty}(w)$ into $WL^{1,\kappa}_{\rho,\infty}(w)$.
\end{thm}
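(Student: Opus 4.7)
The plan is the standard Morrey-space adaptation of the weak $(1,1)$ bound in Theorem \ref{weak}. Write $\mathcal T$ for either $\mathcal R$ or $\mathcal R^{\ast}$, fix a ball $B=B(x_0,r)$, and decompose $f=f_1+f_2$ with $f_1:=f\chi_{2B}$ and $f_2:=f\chi_{(2B)^c}$. For $\lambda>0$,
\begin{equation*}
\lambda\,w\bigl(\{x\in B:|\mathcal Tf(x)|>\lambda\}\bigr)\le I_1+I_2,\qquad I_j:=\lambda\,w\bigl(\{x\in B:|\mathcal Tf_j(x)|>\lambda/2\}\bigr).
\end{equation*}
Given $f\in L^{1,\kappa}_{\rho,\theta}(w)$ for some $\theta>0$, the goal is to bound each $I_j/w(B)^{\kappa}$ uniformly in $\lambda$ by $C(1+r/\rho(x_0))^{\theta'}\|f\|_{L^{1,\kappa}_{\rho,\theta}(w)}$ for some $\theta'>0$.

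For $I_1$, Theorem \ref{weak} gives $I_1\le C\|f_1\|_{L^1(w)}=C\int_{2B}|f|\,w$; the Morrey-norm definition together with the doubling-with-bump estimate $w(2B)\le C(1+r/\rho(x_0))^{\alpha}w(B)$---a direct consequence of applying the $A^{\rho,\infty}_1$ condition to $2B$ and $B$---yields $I_1\le C(1+r/\rho(x_0))^{\theta_1}\|f\|_{L^{1,\kappa}_{\rho,\theta}(w)}w(B)^{\kappa}$ for some $\theta_1>0$.

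For $I_2$, write $(2B)^c=\bigcup_{k\ge 1}(2^{k+1}B\setminus 2^kB)$. For $x\in B$ and $y$ in the $k$-th annulus one has $|x-y|\approx 2^kr$, so Lemma \ref{kernel} with exponent $N$ (to be chosen large) together with \eqref{com2}, applied with $x_0$ playing the role of ``$x$'' and $x\in B(x_0,r)$ playing the role of ``$y$'' to trade $\rho(x)$ for $\rho(x_0)$, gives
\begin{equation*}
|\mathcal Tf_2(x)|\le C\bigl(1+r/\rho(x_0)\bigr)^{NN_0/(N_0+1)}\sum_{k\ge 1}\bigl(1+2^kr/\rho(x_0)\bigr)^{-N}\frac{1}{|2^{k+1}B|}\int_{2^{k+1}B}|f(y)|\,dy.
\end{equation*}
The $A^{\rho,\infty}_1$ hypothesis gives $w(y)^{-1}\le C(1+2^kr/\rho(x_0))^{\theta_0}|2^{k+1}B|/w(2^{k+1}B)$ for a.e.\ $y\in 2^{k+1}B$, converting the unweighted average into a weighted one; the Morrey norm together with Lemma \ref{comparelem} (with the power $\kappa-1<0$ producing $w(2^{k+1}B)^{\kappa-1}\le C\,2^{-kd\delta(1-\kappa)}(1+2^kr/\rho(x_0))^{\eta(1-\kappa)}w(B)^{\kappa-1}$) then collapses the $k$-th term into $C\|f\|_{L^{1,\kappa}_{\rho,\theta}(w)}w(B)^{\kappa-1}\,2^{-kd\delta(1-\kappa)}(1+2^kr/\rho(x_0))^{-N+\theta_0+\theta+\eta(1-\kappa)}$. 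Choosing $N$ so that $-N+\theta_0+\theta+\eta(1-\kappa)\le 0$, the bump inside the sum is bounded by $1$ and the series $\sum_k 2^{-kd\delta(1-\kappa)}$ converges, giving a uniform bound $|\mathcal Tf_2(x)|\le C(1+r/\rho(x_0))^{\theta_2}\|f\|_{L^{1,\kappa}_{\rho,\theta}(w)}w(B)^{\kappa-1}$. A Chebyshev dichotomy---if $\lambda/2$ exceeds this pointwise bound the level set is empty, otherwise $\lambda w(\{\cdots\})\le\lambda w(B)$ is controlled by twice the pointwise bound times $w(B)$---closes $I_2$.

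The principal obstacle is the bookkeeping of four independent bump factors $(1+2^kr/\rho(x_0))^{\#}$ arising in the $I_2$ estimate: from the Morrey norm (exponent $\theta$), from inverting $w$ via $A^{\rho,\infty}_1$ ($\theta_0$), from Lemma \ref{comparelem} ($\eta(1-\kappa)$), and from the switch $\rho(x)\to\rho(x_0)$ (exponent $NN_0/(N_0+1)$, which stays outside the sum). One must pick a single $N$ large enough that the $k$-dependent bumps are simultaneously absorbed by the kernel decay while the geometric factor $2^{-kd\delta(1-\kappa)}$ (coming from $|B|/|2^{k+1}B|$ in Lemma \ref{comparelem}) keeps the dyadic series convergent. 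Combining the bounds on $I_1$ and $I_2$ and dividing by $w(B)^{\kappa}$ yields $\mathcal Tf\in WL^{1,\kappa}_{\rho,\theta_3}(w)\subset WL^{1,\kappa}_{\rho,\infty}(w)$ with $\theta_3=\max(\theta_1,\theta_2)$, as desired.
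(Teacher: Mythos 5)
Your proposal is correct and follows essentially the same route as the paper: same decomposition $f=f_1+f_2$, same use of Theorem \ref{weak} plus the doubling-with-bump estimate for $I_1$, and the same chain for $I_2$ (kernel decay from Lemma \ref{kernel}, annular decomposition, the $\rho(x)\to\rho(x_0)$ switch via \eqref{com2}, the $A^{\rho,\infty}_1$ condition to pass to a weighted average, the Morrey norm, and Lemma \ref{comparelem} to compare $w(2^{k+1}B)$ with $w(B)$). The only cosmetic difference is that you close $I_2$ by a pointwise $L^\infty(B)$ bound plus a case split on $\lambda$, while the paper applies Chebyshev to get $\lambda\,w(\{|\mathcal Tf_2|>\lambda/2\})\le 2\int_B|\mathcal Tf_2|w$ and then uses the same uniform pointwise bound---these give the identical estimate.
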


\begin{thm}\label{mainthm:3}
Let $1<p<\infty$, $0<\kappa<1$ and $w\in A^{\rho,\infty}_p$. If $V\in RH_q$ with $q\geq d$, then the commutator operators $[b,\mathcal R]$ and $[b,\mathcal R^{\ast}]$ are both bounded on $L^{p,\kappa}_{\rho,\infty}(w)$, whenever $b\in\mathrm{BMO}_{\rho,\infty}(\mathbb R^d)$.
\end{thm}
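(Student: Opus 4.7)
The plan is to fix a ball $B = B(x_0, r) \subset \mathbb R^d$ and verify that, for some $\theta' > 0$ independent of $B$,
\begin{equation*}
\left(\frac{1}{w(B)^{\kappa}}\int_B \bigl|[b,\mathcal T]f(x)\bigr|^{p} w(x)\,dx\right)^{1/p} \leq C\,\|f\|_{L^{p,\kappa}_{\rho,\theta}(w)}\left(1 + \frac{r}{\rho(x_0)}\right)^{\theta'},
\end{equation*}
where $\mathcal T$ denotes either $\mathcal R$ or $\mathcal R^{\ast}$. I would write $f = f_1 + f_2$ with $f_1 = f\chi_{2B}$ and $f_2 = f\chi_{\mathbb R^d\setminus 2B}$, so that by linearity $[b,\mathcal T]f = [b,\mathcal T]f_1 + [b,\mathcal T]f_2$, and then control the two resulting terms in the $L^p(w)$-norm on $B$ separately.

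For the local piece $[b,\mathcal T]f_1$, I would invoke the weighted strong-type estimate of Theorem \ref{cstrong} to obtain
\begin{equation*}
\|[b,\mathcal T]f_1\|_{L^p(w,B)} \leq \|[b,\mathcal T]f_1\|_{L^p(w)} \leq C\,\|b\|_{\mathrm{BMO}_{\rho,\theta}}\,\|f\|_{L^p(w,2B)},
\end{equation*}
then apply the definition of the weighted Morrey norm on the ball $2B$ together with a doubling-type inequality $w(2B) \leq C\,w(B)(1 + r/\rho(x_0))^{\eta'}$ that follows from the $A^{\rho,\infty}_p$-hypothesis on $w$. This yields the bound for $[b,\mathcal T]f_1$ modulo finitely many accumulated factors of $(1 + r/\rho(x_0))$.

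For the far piece $[b,\mathcal T]f_2$, I would use the pointwise identity
\begin{equation*}
[b,\mathcal T]f_2(x) = (b(x) - b_B)\,\mathcal T f_2(x) - \mathcal T\bigl((b - b_B)f_2\bigr)(x),\qquad x \in B,
\end{equation*}
and bound each summand. The kernel estimate of Lemma \ref{kernel}, the dyadic decomposition $\mathbb R^d\setminus 2B = \bigcup_{k\geq 1}(2^{k+1}B\setminus 2^k B)$, and \eqref{com2} of Lemma \ref{N0} (applied with $x \in B$ and $y$ in a dyadic annulus) convert the kernel decay $(1 + |x - y|/\rho(x))^{-N}$ into a power of $(1 + 2^k r/\rho(x_0))^{-1}$ at the cost of a correction $(1 + r/\rho(x_0))^{(\cdot)}$. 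The factor $|b(x) - b_B|$ in the first summand is controlled by the weighted generalized H\"older inequality \eqref{gholder} and Proposition \ref{wangbmo}, which gives a bound on $\|b - b_B\|_{\exp L(w),B}$ of the form $C\|b\|_{\mathrm{BMO}_{\rho,\theta}}(1 + r/\rho(x_0))^{\theta^{\ast}+\eta'}$. For the second summand, I would split $b(y) - b_B = (b(y) - b_{2^{k+1}B}) + (b_{2^{k+1}B} - b_B)$, controlling the telescoping mean by $C\,k\,\|b\|_{\mathrm{BMO}_{\rho,\theta}}(1 + 2^{k+1}r/\rho(x_0))^{\theta}$ and the oscillation $|b(y) - b_{2^{k+1}B}|$ by the same Orlicz/H\"older argument combined with Proposition \ref{wangbmo} and Lemma \ref{comparelem} on $2^{k+1}B$.

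The principal technical obstacle is the careful bookkeeping of the correction factors of the form $(1 + r/\rho(x_0))^{(\cdot)}$ and $(1 + 2^k r/\rho(x_0))^{(\cdot)}$ that each application of Lemmas \ref{N0}, \ref{rh}, \ref{comparelem} and Proposition \ref{wangbmo} contributes, together with the polynomial-in-$k$ factor from the telescoping BMO mean and the factor $w(2^{k+1}B)^{\kappa/p}$ that appears when applying the Morrey norm on dilates of $B$. Choosing the decay integer $N$ in Lemma \ref{kernel} sufficiently large (depending on $N_0$, $\theta$, $\eta$, $\delta$, $\kappa$, $p$) reduces the dyadic series to one of the form $\sum_{k\geq 1} k^{\alpha}\cdot 2^{-k\beta}$ with $\beta > 0$, which converges and produces a finite final exponent $\theta'$ for the global prefactor $(1 + r/\rho(x_0))^{\theta'}$, placing $[b,\mathcal T]f$ in $L^{p,\kappa}_{\rho,\theta'}(w) \subset L^{p,\kappa}_{\rho,\infty}(w)$ and completing the argument.
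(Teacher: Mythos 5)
Your global strategy matches the paper's: the same $f=f_1+f_2$ splitting, Theorem \ref{cstrong} plus the doubling estimate \eqref{doubling1} for the local part, the pointwise kernel bound with \eqref{com2} and the dyadic decomposition for the far part, and the telescoping split $b(y)-b_B=(b(y)-b_{2^{k+1}B})+(b_{2^{k+1}B}-b_B)$ handled via Lemma \ref{BMO2}. That much is correct.

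However, the tools you cite for controlling the oscillation of $b$ are the \emph{endpoint} ($p=1$, $L\log L$) tools, and they do not produce the $L^p(w)$ estimate needed here. The weighted Orlicz H\"older inequality \eqref{gholder} is an $L^1(w)$-type pairing: it dominates $\frac{1}{w(B)}\int_B|fg|w$ by $\|f\|_{L\log L(w),B}\|g\|_{\exp L(w),B}$. For the first summand $(b(x)-b_B)\mathcal T f_2(x)$ you actually need $\bigl(\int_B|b-b_B|^p w\bigr)^{1/p}$, and \eqref{gholder} together with a bound on $\|b-b_B\|_{\exp L(w),B}$ does not yield that directly. The paper instead derives the $L^p(w)$ oscillation bound (Lemma \ref{BMO1}) from Proposition \ref{wangbmo} by the layer-cake formula $\int_B|b-b_B|^p w\,dx=\int_0^\infty p\lambda^{p-1}w(\{|b-b_B|>\lambda\})\,d\lambda$. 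Similarly, for the far-piece oscillation term your ``same Orlicz/H\"older argument'' would produce an $L\log L$-Luxemburg norm of $f$ over the dilated ball, which cannot be matched against the Morrey norm $\|f\|_{L^{p,\kappa}_{\rho,\theta}(w)}$ without extra work; the paper instead applies ordinary H\"older with exponents $p,p'$ to the integral $\frac{1}{|2^{k+1}B|}\int_{2^{k+1}B}|b-b_{2^{k+1}B}||f|\,dy$, uses the duality $w\in A^{\rho,\theta'}_p\Leftrightarrow w^{-p'/p}\in A^{\rho,\theta'}_{p'}$, and then applies Lemma \ref{BMO1} to the dual weight $v=w^{-p'/p}$. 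You should replace the $\exp L$/$L\log L$ pairing by the $L^p(w)$ estimate of Lemma \ref{BMO1} plus ordinary H\"older and the dual-weight observation; the $\exp L$/$L\log L$ machinery belongs to the proof of Theorem \ref{mainthm:4}, not this one.
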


To deal with the commutators in the endpoint case, we need to consider a new kind of weighted Morrey spaces of $L\log L$ type.
\begin{defin}
Let $p=1$, $0\leq\kappa<1$ and $w$ be a weight. For given $0<\theta<\infty$, the weighted Morrey space $(L\log L)^{1,\kappa}_{\rho,\theta}(w)$ is defined to be the set of all locally integrable functions $f$ on $\mathbb R^d$ for which
\begin{equation*}
w(B)^{1-\kappa}\big\|f\big\|_{L\log L(w),B}
\leq C\cdot\left(1+\frac{r}{\rho(x_0)}\right)^{\theta}
\end{equation*}
for every ball $B=B(x_0,r)$ in $\mathbb R^d$, or
\begin{equation*}
\|f\|_{(L\log L)^{1,\kappa}_{\rho,\theta}(w)}:=\sup_B\left(1+\frac{r}{\rho(x_0)}\right)^{-\theta}w(B)^{1-\kappa}\big\|f\big\|_{L\log L(w),B}
<\infty.
\end{equation*}
\end{defin}

Concerning the continuity properties of $[b,\mathcal R]$ and $[b,\mathcal R^{\ast}]$ in the weighted Morrey spaces of $L\log L$ type, we have
\begin{thm}\label{mainthm:4}
Let $p=1$, $0<\kappa<1$ and $w\in A^{\rho,\infty}_1$. If $V\in RH_q$ with $q\geq d$ and $b\in\mathrm{BMO}_{\rho,\infty}(\mathbb R^d)$, then for any given $\lambda>0$ and any given ball $B=B(x_0,r)$ of $\mathbb R^d$, there exist some constants $C>0$ and $\vartheta>0$ such that the following inequalities
\begin{equation*}
\begin{split}
&\frac{1}{w(B)^{\kappa}}\cdot w\big(\big\{x\in B:|[b,\mathcal R]f(x)|>\lambda\big\}\big)
\leq C\left(1+\frac{r}{\rho(x_0)}\right)^{\vartheta}
\bigg\|\Phi\bigg(\frac{|f|}{\lambda}\bigg)\bigg\|_{(L\log L)^{1,\kappa}_{\rho,\theta}(w)},\\
&\frac{1}{w(B)^{\kappa}}\cdot w\big(\big\{x\in B:|[b,\mathcal R^{\ast}]f(x)|>\lambda\big\}\big)
\leq C\left(1+\frac{r}{\rho(x_0)}\right)^{\vartheta}
\bigg\|\Phi\bigg(\frac{|f|}{\lambda}\bigg)\bigg\|_{(L\log L)^{1,\kappa}_{\rho,\theta}(w)},
\end{split}
\end{equation*}
hold for those functions $f$ such that $\Phi(|f|)\in(L\log L)^{1,\kappa}_{\rho,\theta}(w)$ with some fixed $\theta>0$, where $\Phi(t)=t\cdot(1+\log^+t)$.
\end{thm}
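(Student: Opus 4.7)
I propose to prove the two inequalities together, since by Lemma \ref{kernel} the kernels $\mathcal K$ and $\mathcal K^{\ast}$ satisfy identical pointwise bounds; throughout let $\mathcal T$ denote either $\mathcal R$ or $\mathcal R^{\ast}$. Fix a ball $B=B(x_0,r)$ and decompose $f=f_1+f_2$ with $f_1=f\chi_{2B}$. For $x\in B$ I use the standard constant trick
\[
[b,\mathcal T]f(x)=[b,\mathcal T]f_1(x)+(b(x)-b_B)\mathcal T f_2(x)-\mathcal T((b-b_B)f_2)(x),
\]
and split the sublevel set at level $\lambda/3$ into three contributions $I_1,I_2,I_3$, one for each summand.

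The local piece $I_1$ falls to Theorem \ref{cweak} applied to $f_1$, which gives $I_1\leq C\int_{2B}\Phi(|f|/\lambda)\,w\,dx$. Dominating this $L(w)$-average by the $L\log L(w)$-average via \eqref{small}, inserting the definition of the space $(L\log L)^{1,\kappa}_{\rho,\theta}(w)$ on $2B$, and using the $A^{\rho,\infty}_1$ condition of $w$ to replace $w(2B)^{\kappa}$ by $Cw(B)^{\kappa}(1+r/\rho(x_0))^{\vartheta_1}$ will give the desired bound for $I_1$.

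For $I_2$ I would apply Chebyshev together with the kernel estimate of Lemma \ref{kernel}, decomposed over annuli $2^{k+1}B\setminus 2^kB$ with $k\geq 1$; formula \eqref{com2} lets us replace $\rho(x)$ by $\rho(x_0)$ for $x\in B$ at the cost of an extra factor $(1+r/\rho(x_0))^{NN_0/(N_0+1)}$. The outcome is a sum of annular Lebesgue averages of $|f|$ multiplied by $\int_B|b-b_B|w\,dx$, and the latter is bounded by $Cw(B)(1+r/\rho(x_0))^{\alpha}\|b\|_{\mathrm{BMO}_{\rho,\theta}}$ by integrating the distributional inequality \eqref{wang}. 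Each annular average of $|f|$ is then converted via the $A^{\rho,\infty}_1$ property to an $L(w)$-average, dominated by an $L\log L(w)$-average through \eqref{small}, and controlled by $\|\Phi(|f|/\lambda)\|_{(L\log L)^{1,\kappa}_{\rho,\theta}(w)}$ using $|f|/\lambda\leq\Phi(|f|/\lambda)$ and homogeneity of the Luxemburg norm. Lemma \ref{comparelem} finally converts $w(2^{k+1}B)^{\kappa-1}$ into $w(B)^{\kappa-1}$ times a geometric gain $2^{-(k+1)d\delta(1-\kappa)}$, so that with $N$ chosen large enough the resulting series converges.

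The genuinely delicate piece is $I_3$, where $b-b_B$ sits inside $\mathcal T$. After Chebyshev, the kernel bound and the same annular decomposition, one must estimate $|2^{k+1}B|^{-1}\int_{2^{k+1}B}|b-b_B||f|\,dy$. Changing measure using the $A^{\rho,\infty}_1$ property and then invoking the weighted generalized Hölder inequality \eqref{gholder} bounds this by $C(1+2^{k}r/\rho(x_0))^{\theta_w}\|b-b_B\|_{\exp L(w),2^{k+1}B}\|f\|_{L\log L(w),2^{k+1}B}$. The first factor is the main obstacle: I would split $b-b_B=(b-b_{2^{k+1}B})+(b_{2^{k+1}B}-b_B)$, integrate the distribution function \eqref{wang} of Proposition \ref{wangbmo} over $2^{k+1}B$ to get $\|b-b_{2^{k+1}B}\|_{\exp L(w),2^{k+1}B}\leq C(1+2^{k+1}r/\rho(x_0))^{\theta^{\ast}+\eta}\|b\|_{\mathrm{BMO}_{\rho,\theta}}$, and handle the constant piece $|b_{2^{k+1}B}-b_B|$ by a telescoping chain of means, which produces a mild factor $C(k+1)(1+2^{k+1}r/\rho(x_0))^{\theta}\|b\|_{\mathrm{BMO}_{\rho,\theta}}$. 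The linear factor $(k+1)$ is absorbed by the geometric decay $2^{-(k+1)d\delta(1-\kappa)}$ of Lemma \ref{comparelem}, and choosing $N$ large enough makes the remaining series converge. Combining $I_1,I_2,I_3$ then delivers the claim with some $\vartheta>0$ depending on $d,\kappa,\theta,\theta^{\ast},\eta,N_0,\delta$.
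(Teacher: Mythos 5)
Your proposal follows essentially the same route as the paper's proof: the same decomposition $f=f_1+f_2$ and the same three-fold splitting of the commutator (your $I_1,I_2,I_3$ correspond to the paper's $J_1'$, $J_3'$, $J_4'$), with $I_1$ handled by Theorem~\ref{cweak} plus \eqref{small} and the doubling estimate, $I_2$ by Chebyshev, the kernel bound, \eqref{com2}, Lemma~\ref{BMO1} (which the paper derives exactly as you propose, by integrating \eqref{wang}), the $A^{\rho,\infty}_1$ condition and $t\le\Phi(t)$, and $I_3$ by the generalized H\"older inequality \eqref{gholder} together with the split $b-b_B=(b-b_{2^{k+1}B})+(b_{2^{k+1}B}-b_B)$, the exponential-integrability estimate of Lemma~\ref{BMO3}, the telescoping bound of Lemma~\ref{BMO2}, and Lemma~\ref{comparelem} to produce geometric decay. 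The only cosmetic differences are the choice of thresholds ($\lambda/3$ versus the paper's $\lambda/2$ then $\lambda/4$) and the order in which the split of $b-b_B$ and the generalized H\"older inequality are applied; both are mathematically equivalent.
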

If we denote
\begin{equation*}
(L\log L)^{1,\kappa}_{\rho,\infty}(w):=\bigcup_{\theta>0} (L\log L)^{1,\kappa}_{\rho,\theta}(w),
\end{equation*}
then Theorem \ref{mainthm:4} now tells us that the commutators $[b,\mathcal R]$ and $[b,\mathcal R^{\ast}]$ are both bounded from $(L\log L)^{1,\kappa}_{\rho,\infty}(w)$ into $WL^{1,\kappa}_{\rho,\infty}(w)$, when $b$ is in $\mathrm{BMO}_{\rho,\infty}(\mathbb R^d)$.

\section{Proofs of Theorems $\ref{mainthm:1}$ and $\ref{mainthm:2}$}

In this section, we will prove the conclusions of Theorems \ref{mainthm:1} and \ref{mainthm:2}.

\begin{proof}[Proof of Theorem $\ref{mainthm:1}$]
We denote by $\mathcal T$ either $\mathcal R$ or $\mathcal R^{\ast}$. By definition, we only have to show that for any given ball $B=B(x_0,r)$ of $\mathbb R^d$, there is some $\vartheta>0$ such that
\begin{equation}\label{Main1}
\bigg(\frac{1}{w(B)^{\kappa}}\int_B\big|\mathcal T f(x)\big|^pw(x)\,dx\bigg)^{1/p}\leq C\cdot\left(1+\frac{r}{\rho(x_0)}\right)^{\vartheta}
\end{equation}
holds for any $f\in L^{p,\kappa}_{\rho,\infty}(w)$ with $1<p<\infty$ and $0<\kappa<1$. Suppose that $f\in L^{p,\kappa}_{\rho,\theta}(w)$ for some $\theta>0$ and $w\in A^{\rho,\theta'}_p$ for some $\theta'>0$. We decompose $f$ as
\begin{equation*}
\begin{cases}
f=f_1+f_2\in L^{p,\kappa}_{\rho,\theta}(w);\  &\\
f_1=f\cdot\chi_{2B};\  &\\
f_2=f\cdot\chi_{(2B)^c},
\end{cases}
\end{equation*}
where $2B$ is the ball centered at $x_0$ and radius $2r>0$, and $\chi_{2B}$ is the characteristic function of $2B$. Then by the linearity of $\mathcal T$, we write
\begin{equation*}
\begin{split}
\bigg(\frac{1}{w(B)^{\kappa}}\int_B\big|\mathcal Tf(x)\big|^pw(x)\,dx\bigg)^{1/p}
&\leq\bigg(\frac{1}{w(B)^{\kappa}}\int_B\big|\mathcal Tf_1(x)\big|^pw(x)\,dx\bigg)^{1/p}\\
&+\bigg(\frac{1}{w(B)^{\kappa}}\int_B\big|\mathcal Tf_2(x)\big|^pw(x)\,dx\bigg)^{1/p}\\
&:=I_1+I_2.
\end{split}
\end{equation*}
We now analyze each term separately. By Theorem \ref{strong}, we get
\begin{equation*}
\begin{split}
I_1&=\bigg(\frac{1}{w(B)^{\kappa}}\int_B\big|\mathcal Tf_1(x)\big|^pw(x)\,dx\bigg)^{1/p}\\
&\leq C\cdot\frac{1}{w(B)^{\kappa/p}}\bigg(\int_{\mathbb R^d}\big|f_1(x)\big|^pw(x)\,dx\bigg)^{1/p}\\
&=C\cdot\frac{1}{w(B)^{\kappa/p}}\bigg(\int_{2B}\big|f(x)\big|^pw(x)\,dx\bigg)^{1/p}\\
&\leq C\|f\|_{L^{p,\kappa}_{\rho,\theta}(w)}\cdot\frac{w(2B)^{\kappa/p}}{w(B)^{\kappa/p}}\cdot\left(1+\frac{2r}{\rho(x_0)}\right)^{\theta}.
\end{split}
\end{equation*}
Since $w\in A^{\rho,\theta'}_p$ with $1<p<\infty$ and $0<\theta'<\infty$, then we know that the following inequality
\begin{equation}\label{doubling1}
w\big(2B(x_0,r)\big)\leq C\cdot\left(1+\frac{2r}{\rho(x_0)}\right)^{p\theta'}w\big(B(x_0,r)\big)
\end{equation}
is valid. In fact, for $1<p<\infty$, by H\"older's inequality and the definition of $A^{\rho,\theta'}_p$, we have
\begin{equation*}
\begin{split}
&\frac{1}{|2B|}\int_{2B}\big|\hbar(x)\big|\,dx=\frac{1}{|2B|}\int_{2B}\big|\hbar(x)\big|w(x)^{1/p}w(x)^{-1/p}\,dx\\
&\leq\frac{1}{|2B|}\bigg(\int_{2B}\big|\hbar(x)\big|^pw(x)\,dx\bigg)^{1/p}
\bigg(\int_{2B}w(x)^{-{p'}/p}\,dx\bigg)^{1/{p'}}\\
&\leq \frac{C_w}{w(2B)^{1/p}}\bigg(\int_{2B}\big|\hbar(x)\big|^pw(x)\,dx\bigg)^{1/p}
\left(1+\frac{2r}{\rho(x_0)}\right)^{\theta'}.
\end{split}
\end{equation*}
If we take $\hbar(x)=\chi_B(x)$, then the above expression becomes
\begin{equation}\label{W1}
\frac{|B|}{|2B|}\leq C_w\cdot\frac{w(B)^{1/p}}{w(2B)^{1/p}}\left(1+\frac{2r}{\rho(x_0)}\right)^{\theta'},
\end{equation}
which in turn implies \eqref{doubling1}. Therefore,
\begin{equation*}
\begin{split}
I_1
&\leq C\|f\|_{L^{p,\kappa}_{\rho,\theta}(w)}
\cdot\left(1+\frac{2r}{\rho(x_0)}\right)^{(p\theta')\cdot(\kappa/p)}\cdot\left(1+\frac{2r}{\rho(x_0)}\right)^{\theta}\\
&=C\|f\|_{L^{p,\kappa}_{\rho,\theta}(w)}
\cdot\left(1+\frac{2r}{\rho(x_0)}\right)^{\vartheta'}\leq C\cdot\left(1+\frac{r}{\rho(x_0)}\right)^{\vartheta'},
\end{split}
\end{equation*}
where $\vartheta':=\kappa\cdot\theta'+\theta$. For the other term $I_2$, notice that for any $x\in B$ and $y\in (2B)^c$, one has $|x-y|\approx|x_0-y|$. It then follows from Lemma \ref{kernel} that for any $x\in B(x_0,r)$ and any positive integer $N$,
\begin{equation}\label{T}
\begin{split}
\big|\mathcal Tf_2(x)\big|&\leq\int_{(2B)^c}|\mathcal K(x,y)|\cdot|f(y)|\,dy~(\hbox{or}~~ \mathcal K^{\ast}(x,y))\\
&\leq C_N\int_{(2B)^c}\bigg(1+\frac{|x-y|}{\rho(x)}\bigg)^{-N}\frac{1}{|x-y|^d}\cdot|f(y)|\,dy\\
&\leq C_{N,d}\int_{(2B)^c}\bigg(1+\frac{|x_0-y|}{\rho(x)}\bigg)^{-N}\frac{1}{|x_0-y|^d}\cdot|f(y)|\,dy\\
&=C_{N,d}\sum_{k=1}^\infty\int_{2^{k+1}B\backslash 2^k B}\bigg(1+\frac{|x_0-y|}{\rho(x)}\bigg)^{-N}\frac{1}{|x_0-y|^d}\cdot|f(y)|\,dy\\
&\leq C\sum_{k=1}^\infty\frac{1}{|2^{k+1}B|}\int_{2^{k+1}B\backslash 2^k B}\bigg(1+\frac{2^kr}{\rho(x)}\bigg)^{-N}|f(y)|\,dy.
\end{split}
\end{equation}
In view of \eqref{com2} in Lemma \ref{N0}, we further obtain
\begin{align}\label{Tf2}
\big|\mathcal Tf_2(x)\big|
&\leq C\sum_{k=1}^\infty\frac{1}{|2^{k+1}B|}\int_{2^{k+1}B}\left(1+\frac{r}{\rho(x_0)}\right)^{N\cdot\frac{N_0}{N_0+1}}
\left(1+\frac{2^kr}{\rho(x_0)}\right)^{-N}|f(y)|\,dy\notag\\
&\leq C\sum_{k=1}^\infty\frac{1}{|2^{k+1}B|}\int_{2^{k+1}B}\left(1+\frac{r}{\rho(x_0)}\right)^{N\cdot\frac{N_0}{N_0+1}}
\left(1+\frac{2^{k+1}r}{\rho(x_0)}\right)^{-N}|f(y)|\,dy.
\end{align}
Moreover, by using H\"older's inequality and $A^{\rho,\theta'}_p$ condition on $w$, we get
\begin{equation*}
\begin{split}
&\frac{1}{|2^{k+1}B|}\int_{2^{k+1}B}\big|f(y)\big|\,dy\\
&\leq\frac{1}{|2^{k+1}B|}\bigg(\int_{2^{k+1}B}\big|f(y)\big|^pw(y)\,dy\bigg)^{1/p}
\bigg(\int_{2^{k+1}B}w(y)^{-{p'}/p}\,dy\bigg)^{1/{p'}}\\
&\leq C\|f\|_{L^{p,\kappa}_{\rho,\theta}(w)}\cdot\frac{w(2^{k+1}B)^{{\kappa}/p}}{w(2^{k+1}B)^{1/p}}
\left(1+\frac{2^{k+1}r}{\rho(x_0)}\right)^{\theta}\left(1+\frac{2^{k+1}r}{\rho(x_0)}\right)^{\theta'}.
\end{split}
\end{equation*}
Hence,
\begin{equation*}
\begin{split}
I_2&\leq C\|f\|_{L^{p,\kappa}_{\rho,\theta}(w)}\cdot\frac{w(B)^{1/p}}{w(B)^{{\kappa}/p}}
\sum_{k=1}^\infty\frac{w(2^{k+1}B)^{{\kappa}/p}}{w(2^{k+1}B)^{1/p}}
\left(1+\frac{r}{\rho(x_0)}\right)^{N\cdot\frac{N_0}{N_0+1}}\left(1+\frac{2^{k+1}r}{\rho(x_0)}\right)^{-N+\theta+\theta'}\\
&=C\|f\|_{L^{p,\kappa}_{\rho,\theta}(w)}\left(1+\frac{r}{\rho(x_0)}\right)^{N\cdot\frac{N_0}{N_0+1}}
\sum_{k=1}^\infty\frac{w(B)^{{(1-\kappa)}/p}}{w(2^{k+1}B)^{{(1-\kappa)}/p}}
\left(1+\frac{2^{k+1}r}{\rho(x_0)}\right)^{-N+\theta+\theta'}.
\end{split}
\end{equation*}
Recall that $w\in A^{\rho,\theta'}_p$ with $0<\theta'<\infty$ and $1<p<\infty$, then there exist two positive numbers $\delta,\eta>0$ such that \eqref{compare} holds. This allows us to obtain
\begin{equation*}
\begin{split}
I_2&\leq C\|f\|_{L^{p,\kappa}_{\rho,\theta}(w)}
\left(1+\frac{r}{\rho(x_0)}\right)^{N\cdot\frac{N_0}{N_0+1}}\sum_{k=1}^\infty\left(\frac{|B|}{|2^{k+1}B|}\right)^{\delta{(1-\kappa)}/p}\\
&\quad\times\left(1+\frac{2^{k+1}r}{\rho(x_0)}\right)^{\eta{(1-\kappa)}/p}\left(1+\frac{2^{k+1}r}{\rho(x_0)}\right)^{-N+\theta+\theta'}\\
&=C\|f\|_{L^{p,\kappa}_{\rho,\theta}(w)}
\left(1+\frac{r}{\rho(x_0)}\right)^{N\cdot\frac{N_0}{N_0+1}}\sum_{k=1}^\infty\left(\frac{|B|}{|2^{k+1}B|}\right)^{\delta{(1-\kappa)}/p}
\left(1+\frac{2^{k+1}r}{\rho(x_0)}\right)^{-N+\theta+\theta'+\eta{(1-\kappa)}/p}.
\end{split}
\end{equation*}
Thus, by choosing $N$ large enough so that $N>\theta+\theta'+\eta{(1-\kappa)}/p$, we then have
\begin{equation*}
\begin{split}
I_2&\leq C\|f\|_{L^{p,\kappa}_{\rho,\theta}(w)}
\left(1+\frac{r}{\rho(x_0)}\right)^{N\cdot\frac{N_0}{N_0+1}}\sum_{k=1}^\infty\left(\frac{|B|}{|2^{k+1}B|}\right)^{\delta{(1-\kappa)}/p}\\
&\leq C\left(1+\frac{r}{\rho(x_0)}\right)^{N\cdot\frac{N_0}{N_0+1}}.
\end{split}
\end{equation*}
Summing up the above estimates for $I_1$ and $I_2$ and letting $\vartheta=\max\big\{\vartheta',N\cdot\frac{N_0}{N_0+1}\big\}$, we obtain our desired inequality \eqref{Main1}. This completes the proof of Theorem \ref{mainthm:1}.
\end{proof}

\begin{proof}[Proof of Theorem $\ref{mainthm:2}$]
We denote by $\mathcal T$ either $\mathcal R$ or $\mathcal R^{\ast}$. To prove Theorem \ref{mainthm:2}, by definition, it suffices to prove that for any given ball $B=B(x_0,r)$ of $\mathbb R^d$, there is some $\vartheta>0$ such that
\begin{equation}\label{Main2}
\frac{1}{w(B)^{\kappa}}\sup_{\lambda>0}\lambda\cdot w\big(\big\{x\in B:|\mathcal T f(x)|>\lambda\big\}\big)\leq C\cdot\left(1+\frac{r}{\rho(x_0)}\right)^{\vartheta}
\end{equation}
holds for any $f\in L^{1,\kappa}_{\rho,\infty}(w)$ with $0<\kappa<1$. Now suppose that $f\in L^{1,\kappa}_{\rho,\theta}(w)$ for some $\theta>0$ and $w\in A^{\rho,\theta'}_1$ for some $\theta'>0$. We decompose $f$ as
\begin{equation*}
\begin{cases}
f=f_1+f_2\in L^{1,\kappa}_{\rho,\theta}(w);\  &\\
f_1=f\cdot\chi_{2B};\  &\\
f_2=f\cdot\chi_{(2B)^c}.
\end{cases}
\end{equation*}
Then for any given $\lambda>0$, by the linearity of $\mathcal T$, we can write
\begin{equation*}
\begin{split}
\frac{1}{w(B)^{\kappa}}\lambda\cdot w\big(\big\{x\in B:|\mathcal T f(x)|>\lambda\big\}\big)
&\leq\frac{1}{w(B)^{\kappa}}\lambda\cdot w\big(\big\{x\in B:|\mathcal T f_1(x)|>\lambda/2\big\}\big)\\
&+\frac{1}{w(B)^{\kappa}}\lambda\cdot w\big(\big\{x\in B:|\mathcal T f_2(x)|>\lambda/2\big\}\big)\\
&:=I'_1+I'_2.
\end{split}
\end{equation*}
We first give the estimate for the term $I'_1$. By Theorem \ref{weak}, we get
\begin{equation*}
\begin{split}
I'_1&=\frac{1}{w(B)^{\kappa}}\lambda\cdot w\big(\big\{x\in B:|\mathcal T f_1(x)|>\lambda/2\big\}\big)\\
&\leq C\cdot\frac{1}{w(B)^{\kappa}}\bigg(\int_{\mathbb R^d}\big|f_1(x)\big|w(x)\,dx\bigg)\\
&=C\cdot\frac{1}{w(B)^{\kappa}}\bigg(\int_{2B}\big|f(x)\big|w(x)\,dx\bigg)\\
&\leq C\|f\|_{L^{1,\kappa}_{\rho,\theta}(w)}\cdot\frac{w(2B)^{\kappa}}{w(B)^{\kappa}}\cdot\left(1+\frac{2r}{\rho(x_0)}\right)^{\theta}.
\end{split}
\end{equation*}
Since $w\in A^{\rho,\theta'}_1$ with $0<\theta'<\infty$, similar to the proof of \eqref{doubling1}, we can also show the following estimate as well.
\begin{equation}\label{doubling2}
w\big(2B(x_0,r)\big)\leq C\cdot\left(1+\frac{2r}{\rho(x_0)}\right)^{\theta'}w\big(B(x_0,r)\big).
\end{equation}
In fact, by the definition of $A^{\rho,\theta'}_1$, we can deduce that
\begin{equation*}
\begin{split}
\frac{1}{|2B|}\int_{2B}\big|\hbar(x)\big|\,dx
&\leq \frac{C_w}{w(2B)}\cdot\underset{x\in 2B}{\mbox{ess\,inf}}\;w(x)
\bigg(\int_{2B}\big|\hbar(x)\big|\,dx\bigg)\left(1+\frac{2r}{\rho(x_0)}\right)^{\theta'}\\
&\leq \frac{C_w}{w(2B)}\bigg(\int_{2B}\big|\hbar(x)\big|w(x)\,dx\bigg)\left(1+\frac{2r}{\rho(x_0)}\right)^{\theta'}.
\end{split}
\end{equation*}
If we choose $\hbar(x)=\chi_B(x)$, then the above expression becomes
\begin{equation}\label{W2}
\frac{|B|}{|2B|}\leq C_w\cdot\frac{w(B)}{w(2B)}\left(1+\frac{2r}{\rho(x_0)}\right)^{\theta'},
\end{equation}
which in turn implies \eqref{doubling2}. Therefore,
\begin{equation*}
\begin{split}
I'_1
&\leq C\cdot\left(1+\frac{2r}{\rho(x_0)}\right)^{\theta'\cdot\kappa}\cdot\left(1+\frac{2r}{\rho(x_0)}\right)^{\theta}\\
&=C\cdot\left(1+\frac{2r}{\rho(x_0)}\right)^{\vartheta'}\leq C\cdot\left(1+\frac{r}{\rho(x_0)}\right)^{\vartheta'},
\end{split}
\end{equation*}
where $\vartheta':=\theta'\cdot\kappa+\theta$. As for the other term $I'_2$, by using the pointwise inequality \eqref{Tf2} and Chebyshev's inequality, we deduce that
\begin{equation*}
\begin{split}
I'_2&=\frac{1}{w(B)^{\kappa}}\lambda\cdot w\big(\big\{x\in B:|\mathcal T f_2(x)|>\lambda/2\big\}\big)\\
&\leq\frac{2}{w(B)^{\kappa}}\bigg(\int_{B}\big|\mathcal Tf_2(x)\big|w(x)\,dx\bigg)\\
&\leq C\cdot\frac{w(B)}{w(B)^{\kappa}}
\sum_{k=1}^\infty\frac{1}{|2^{k+1}B|}\int_{2^{k+1}B}\left(1+\frac{r}{\rho(x_0)}\right)^{N\cdot\frac{N_0}{N_0+1}}
\left(1+\frac{2^{k+1}r}{\rho(x_0)}\right)^{-N}|f(y)|\,dy.
\end{split}
\end{equation*}
Moreover, by the $A^{\rho,\theta'}_1$ condition on $w$, we compute
\begin{equation*}
\begin{split}
&\frac{1}{|2^{k+1}B|}\int_{2^{k+1}B}\big|f(y)\big|\,dy\\
&\leq \frac{C_w}{w(2^{k+1}B)}\cdot\underset{y\in 2^{k+1}B}{\mbox{ess\,inf}}\;w(y)
\bigg(\int_{2^{k+1}B}\big|f(y)\big|\,dy\bigg)\left(1+\frac{2^{k+1}r}{\rho(x_0)}\right)^{\theta'}\\
&\leq \frac{C_w}{w(2^{k+1}B)}\bigg(\int_{2^{k+1}B}\big|f(y)\big|w(y)\,dy\bigg)\left(1+\frac{2^{k+1}r}{\rho(x_0)}\right)^{\theta'}\\
&\leq C\|f\|_{L^{1,\kappa}_{\rho,\theta}(w)}\cdot\frac{w(2^{k+1}B)^{\kappa}}{w(2^{k+1}B)}
\left(1+\frac{2^{k+1}r}{\rho(x_0)}\right)^{\theta}\left(1+\frac{2^{k+1}r}{\rho(x_0)}\right)^{\theta'}.
\end{split}
\end{equation*}
Consequently,
\begin{equation*}
\begin{split}
I'_2&\leq C\|f\|_{L^{1,\kappa}_{\rho,\theta}(w)}
\cdot\frac{w(B)}{w(B)^{\kappa}}\sum_{k=1}^\infty\frac{w(2^{k+1}B)^{\kappa}}{w(2^{k+1}B)}
\left(1+\frac{r}{\rho(x_0)}\right)^{N\cdot\frac{N_0}{N_0+1}}\left(1+\frac{2^{k+1}r}{\rho(x_0)}\right)^{-N+\theta+\theta'}\\
&=C\|f\|_{L^{1,\kappa}_{\rho,\theta}(w)}
\left(1+\frac{r}{\rho(x_0)}\right)^{N\cdot\frac{N_0}{N_0+1}}\sum_{k=1}^\infty\frac{w(B)^{{(1-\kappa)}}}{w(2^{k+1}B)^{{(1-\kappa)}}}
\left(1+\frac{2^{k+1}r}{\rho(x_0)}\right)^{-N+\theta+\theta'}.
\end{split}
\end{equation*}
Recall that $w\in A^{\rho,\theta'}_1$ with $0<\theta'<\infty$, then there exist two positive numbers $\delta',\eta'>0$ such that \eqref{compare} holds. Therefore,
\begin{equation*}
\begin{split}
I'_2&\leq C\|f\|_{L^{1,\kappa}_{\rho,\theta}(w)}
\left(1+\frac{r}{\rho(x_0)}\right)^{N\cdot\frac{N_0}{N_0+1}}\sum_{k=1}^\infty\left(\frac{|B|}{|2^{k+1}B|}\right)^{\delta'{(1-\kappa)}}\\
&\quad\times\left(1+\frac{2^{k+1}r}{\rho(x_0)}\right)^{\eta'{(1-\kappa)}}\left(1+\frac{2^{k+1}r}{\rho(x_0)}\right)^{-N+\theta+\theta'}\\
&=C\|f\|_{L^{1,\kappa}_{\rho,\theta}(w)}
\left(1+\frac{r}{\rho(x_0)}\right)^{N\cdot\frac{N_0}{N_0+1}}\\
&\quad\times\sum_{k=1}^\infty\left(\frac{|B|}{|2^{k+1}B|}\right)^{\delta'{(1-\kappa)}}
\left(1+\frac{2^{k+1}r}{\rho(x_0)}\right)^{-N+\theta+\theta'+\eta'{(1-\kappa)}}.
\end{split}
\end{equation*}
By selecting $N$ large enough so that $N>\theta+\theta'+\eta'{(1-\kappa)}$, we thus have
\begin{equation*}
\begin{split}
I'_2&\leq C\left(1+\frac{r}{\rho(x_0)}\right)^{N\cdot\frac{N_0}{N_0+1}}\sum_{k=1}^\infty\left(\frac{|B|}{|2^{k+1}B|}\right)^{\delta'{(1-\kappa)}}\\
&\leq C\left(1+\frac{r}{\rho(x_0)}\right)^{N\cdot\frac{N_0}{N_0+1}}.
\end{split}
\end{equation*}
Let $\vartheta=\max\big\{\vartheta',N\cdot\frac{N_0}{N_0+1}\big\}$. Here $N$ is an appropriate constant. Summing up the above estimates for $I'_1$ and $I'_2$, and then taking the supremum over all $\lambda>0$, we obtain our desired inequality \eqref{Main2}. This finishes the proof of Theorem \ref{mainthm:2}.
\end{proof}

\section{Proofs of Theorems $\ref{mainthm:3}$ and $\ref{mainthm:4}$}

For the results involving commutators, we need the following properties of $\mathrm{BMO}_{\rho,\infty}(\mathbb R^d)$ functions, which are extensions of well-known properties of $\mathrm{BMO}(\mathbb R^d)$ functions.
\begin{lem}\label{BMO1}
If $b\in \mathrm{BMO}_{\rho,\infty}(\mathbb R^d)$ and $w\in A^{\rho,\infty}_p$ with $1\leq p<\infty$, then there exist positive constants $C>0$ and $\mu>0$ such that for every ball $B=B(x_0,r)$ in $\mathbb R^d$, we have
\begin{equation}\label{wang3}
\bigg(\int_B\big|b(x)-b_B\big|^pw(x)\,dx\bigg)^{1/p}\leq C\cdot w(B)^{1/p}\left(1+\frac{r}{\rho(x_0)}\right)^{\mu},
\end{equation}
where $b_B=\frac{1}{|B|}\int_B b(y)\,dy$.
\end{lem}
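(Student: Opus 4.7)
The plan is to reduce the inequality to a computation with the distribution function of $b-b_B$ and then invoke the weighted John--Nirenberg type estimate in Proposition \ref{wangbmo}. Since $b\in\mathrm{BMO}_{\rho,\infty}(\mathbb R^d)$, there is some $\theta>0$ with $b\in\mathrm{BMO}_{\rho,\theta}(\mathbb R^d)$, and since $w\in A^{\rho,\infty}_p$, there is some $\theta'>0$ with $w\in A^{\rho,\theta'}_p$. Both hypotheses of Proposition \ref{wangbmo} are therefore met, and that result supplies exactly the exponential decay in $\lambda$ that makes the following argument work.

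First I would write, for any fixed ball $B=B(x_0,r)$, the layer-cake representation
\begin{equation*}
\int_B\big|b(x)-b_B\big|^p w(x)\,dx=p\int_0^{\infty}\lambda^{p-1}\,w\big(\big\{x\in B:|b(x)-b_B|>\lambda\big\}\big)\,d\lambda.
\end{equation*}
Next, apply Proposition \ref{wangbmo} to the inner weighted distribution function. This yields constants $C_1,C_2,\eta>0$ (depending on $b$ and $w$) such that the integrand is bounded by
\begin{equation*}
C_1\, w(B)\left(1+\frac{r}{\rho(x_0)}\right)^{\eta}\lambda^{p-1}\exp\!\left[-\left(1+\frac{r}{\rho(x_0)}\right)^{-\theta^{\ast}}\!\frac{C_2\lambda}{\|b\|_{\mathrm{BMO}_{\rho,\theta}}}\right],
\end{equation*}
where $\theta^{\ast}=(N_0+1)\theta$.

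Now I would compute the remaining one-dimensional integral in $\lambda$ by the substitution $u=a\lambda$ with $a=C_2\|b\|_{\mathrm{BMO}_{\rho,\theta}}^{-1}\bigl(1+r/\rho(x_0)\bigr)^{-\theta^{\ast}}$. A routine Gamma-function evaluation gives
\begin{equation*}
\int_0^{\infty}\lambda^{p-1}e^{-a\lambda}\,d\lambda=\frac{\Gamma(p)}{a^{p}}=C\,\|b\|_{\mathrm{BMO}_{\rho,\theta}}^{p}\left(1+\frac{r}{\rho(x_0)}\right)^{p\theta^{\ast}}.
\end{equation*}
Substituting this back into the layer-cake expression and then taking the $p$-th root yields
\begin{equation*}
\bigg(\int_B\big|b(x)-b_B\big|^p w(x)\,dx\bigg)^{1/p}\leq C\, w(B)^{1/p}\left(1+\frac{r}{\rho(x_0)}\right)^{\theta^{\ast}+\eta/p},
\end{equation*}
which is the asserted inequality with $\mu:=\theta^{\ast}+\eta/p=(N_0+1)\theta+\eta/p$.

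There is no real obstacle here: Proposition \ref{wangbmo} has already done the heavy lifting (it packages the weighted John--Nirenberg inequality together with Lemma \ref{comparelem}), so the lemma reduces to bookkeeping with the layer-cake formula and a Gamma integral. The only points requiring a little care are (i) tracking how the $\theta$ in the BMO exponent and the $\theta'$ from the weight interact to produce $\eta$ through Proposition \ref{wangbmo}, and (ii) ensuring the final exponent $\mu$ is independent of $\lambda$, which it is because the factor $\bigl(1+r/\rho(x_0)\bigr)^{-\theta^{\ast}}$ in the exponential is pulled out cleanly by the substitution.
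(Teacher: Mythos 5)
Your proof is correct and follows essentially the same route as the paper: the layer-cake formula, Proposition \ref{wangbmo} to control the weighted distribution function, and a change of variables reducing the $\lambda$-integral to a Gamma function, yielding the same exponent $\mu=\theta^{\ast}+\eta/p$.
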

\begin{proof}
We may assume that $b\in \mathrm{BMO}_{\rho,\theta}(\mathbb R^d)$ with $0<\theta<\infty$. According to Proposition \ref{wangbmo}, we can deduce that
\begin{equation*}
\begin{split}
&\bigg(\int_B\big|b(x)-b_B\big|^pw(x)\,dx\bigg)^{1/p}\\
&=\bigg(\int_0^{\infty}p\lambda^{p-1}w\big(\big\{x\in B:|b(x)-b_{B}|>\lambda\big\}\big)d\lambda\bigg)^{1/p}\\
&\leq C_1^{1/p}\cdot w(B)^{1/p}\bigg\{\int_0^{\infty}p\lambda^{p-1}\exp\bigg[-\bigg(1+\frac{r}{\rho(x_0)}\bigg)^{-\theta^{\ast}}\frac{C_2 \lambda}{\|b\|_{\mathrm{BMO}_{\rho,\theta}}}\bigg]
\left(1+\frac{r}{\rho(x_0)}\right)^{\eta}d\lambda\bigg\}^{1/p}\\
&=C_1^{1/p}\cdot w(B)^{1/p}
\bigg\{\int_0^{\infty}p\lambda^{p-1}\exp\bigg[-\bigg(1+\frac{r}{\rho(x_0)}\bigg)^{-\theta^{\ast}}\frac{C_2 \lambda}{\|b\|_{\mathrm{BMO}_{\rho,\theta}}}\bigg]d\lambda\bigg\}^{1/p}
\left(1+\frac{r}{\rho(x_0)}\right)^{\eta/p}.
\end{split}
\end{equation*}
Making change of variables, then we get
\begin{equation*}
\begin{split}
&\bigg(\int_B\big|b(x)-b_B\big|^pw(x)\,dx\bigg)^{1/p}\\
&\leq C_1^{1/p}\cdot w(B)^{1/p}\bigg(\int_0^{\infty}ps^{p-1}e^{-s}ds\bigg)^{1/p}
\left(\frac{\|b\|_{\mathrm{BMO}_{\rho,\theta}}}{C_2}\right)\left(1+\frac{r}{\rho(x_0)}\right)^{\theta^{\ast}}\left(1+\frac{r}{\rho(x_0)}\right)^{\eta/p}\\
&=\big[C_1p\Gamma(p)\big]^{1/p}\left(\frac{\|b\|_{\mathrm{BMO}_{\rho,\theta}}}{C_2}\right)\cdot w(B)^{1/p}
\left(1+\frac{r}{\rho(x_0)}\right)^{\theta^{\ast}+\eta/p},
\end{split}
\end{equation*}
which yields the desired inequality if we choose $C=\big[C_1p\Gamma(p)\big]^{1/p}\left(\frac{\|b\|_{\mathrm{BMO}_{\rho,\theta}}}{C_2}\right)$ and $\mu=\theta^{\ast}+\eta/p$.
\end{proof}

\begin{lem}\label{BMO3}
If $b\in \mathrm{BMO}_{\rho,\theta}(\mathbb R^d)$ with $0<\theta<\infty$ and $w\in A^{\rho,\infty}_1$, then there exist positive constants $C,\gamma>0$ and $\eta>0$ such that for every ball $B=B(x_0,r)$ in $\mathbb R^d$, we have
\begin{equation}\label{wang2}
\begin{split}
&\bigg(\int_B\bigg\{\exp\bigg[\bigg(1+\frac{r}{\rho(x_0)}\bigg)^{-\theta^{\ast}}\frac{\gamma}{\|b\|_{\mathrm{BMO}_{\rho,\theta}}}
\big|b(x)-b_B\big|\bigg]-1\bigg\}w(x)\,dx\bigg)\\
&\leq C\cdot w(B)\left(1+\frac{r}{\rho(x_0)}\right)^{\eta},
\end{split}
\end{equation}
where $b_B=\frac{1}{|B|}\int_B b(y)\,dy$ and $\theta^{\ast}=(N_0+1)\theta$ and $N_0$ is the constant appearing in Lemma \ref{N0}.
\end{lem}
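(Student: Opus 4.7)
The plan is to reduce the exponential integral to a distribution-function integral via the layer-cake representation and then feed in the sub-Gaussian (actually sub-exponential) decay from Proposition \ref{wangbmo}. Writing $\alpha := \bigl(1+r/\rho(x_0)\bigr)^{-\theta^{\ast}} \gamma/\|b\|_{\mathrm{BMO}_{\rho,\theta}}$ and using the elementary identity $e^{\alpha s}-1=\alpha\int_0^{\infty}e^{\alpha t}\chi_{\{s>t\}}\,dt$ for $s\geq 0$, Fubini gives
\begin{equation*}
\int_B\bigl(e^{\alpha|b(x)-b_B|}-1\bigr)w(x)\,dx
=\alpha\int_0^{\infty}e^{\alpha t}\, w\bigl(\{x\in B:|b(x)-b_B|>t\}\bigr)\,dt.
\end{equation*}

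Next I would invoke Proposition \ref{wangbmo} (which is available since $w\in A^{\rho,\infty}_1\subset A^{\rho,\infty}_p$ for every $p\geq 1$). With $A:=\bigl(1+r/\rho(x_0)\bigr)^{-\theta^{\ast}} C_2/\|b\|_{\mathrm{BMO}_{\rho,\theta}}$, that proposition gives
\begin{equation*}
w\bigl(\{x\in B:|b(x)-b_B|>t\}\bigr)\leq C_1 w(B)\bigl(1+r/\rho(x_0)\bigr)^{\eta}e^{-At}.
\end{equation*}
Substituting, the integrand becomes $e^{-(A-\alpha)t}$, so convergence will require $\alpha<A$; this is the crucial point at which I choose the constant $\gamma$.

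The key choice is $\gamma:=C_2/2$, so that $A-\alpha=\alpha=A/2$ and the $\bigl(1+r/\rho(x_0)\bigr)^{-\theta^{\ast}}$ factors in $A$ and $\alpha$ cancel in the ratio $\alpha/(A-\alpha)=1$. Evaluating the remaining elementary integral,
\begin{equation*}
\alpha\int_0^{\infty}e^{-(A-\alpha)t}\,dt=\frac{\alpha}{A-\alpha}=1,
\end{equation*}
we obtain
\begin{equation*}
\int_B\bigl(e^{\alpha|b(x)-b_B|}-1\bigr)w(x)\,dx\leq C_1 w(B)\bigl(1+r/\rho(x_0)\bigr)^{\eta},
\end{equation*}
which is exactly \eqref{wang2} with $C=C_1$ and the same $\eta$ furnished by Proposition \ref{wangbmo}.

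There is no real obstacle here; the argument is a direct analogue of the classical John--Nirenberg-based exponential integrability estimate for BMO. The only delicate point is making sure the $\rho$-dependent factor $\bigl(1+r/\rho(x_0)\bigr)^{-\theta^{\ast}}$ appearing in Proposition \ref{wangbmo} is matched exactly by the same factor inserted into the exponent on the left-hand side of \eqref{wang2}; this matching is precisely what makes the ratio $\alpha/(A-\alpha)$ a constant independent of $B$, so that a single admissible $\gamma$ (any number strictly less than $C_2$) works uniformly over all balls.
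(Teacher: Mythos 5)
Your proposal is correct and is essentially the paper's own proof: both use the layer-cake (distribution function) representation of the exponential integral, apply Proposition \ref{wangbmo} to bound $w(\{x\in B : |b(x)-b_B|>t\})$, observe that the $(1+r/\rho(x_0))^{-\theta^*}$ factor cancels in the exponent, and then choose $\gamma<C_2$ (the paper leaves $\gamma$ arbitrary in $(0,C_2)$, you fix $\gamma=C_2/2$) so that the resulting elementary integral converges. The only cosmetic difference is that the paper substitutes $\lambda^*=\lambda/\alpha$ so as to keep $e^\lambda$ in the integrand, whereas you keep the original variable and carry the prefactor $\alpha$; the computations are identical.
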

\begin{proof}
Recall the following identity (see Proposition 1.1.4 in \cite{grafakos})
\begin{equation*}
\bigg(\int_B\big\{\exp\big[f(x)\big]-1\big\}w(x)\,dx\bigg)
=\int_0^{\infty}e^\lambda w\big(\big\{x\in B:|f(x)|>\lambda\big\}\big)d\lambda.
\end{equation*}
Using this identity and Proposition \ref{wangbmo}, we obtain
\begin{equation*}
\begin{split}
&\bigg(\int_B\bigg\{\exp\bigg[\bigg(1+\frac{r}{\rho(x_0)}\bigg)^{-\theta^{\ast}}\frac{\gamma}{\|b\|_{\mathrm{BMO}_{\rho,\theta}}}
\big|b(x)-b_B\big|\bigg]-1\bigg\}w(x)\,dx\bigg)\\
&=\int_0^{\infty}e^\lambda w\big(\big\{x\in B:\big|b(x)-b_B\big|>\lambda^{\ast}\big\}\big)d\lambda\\
&\leq C_1\cdot w(B)\int_0^{\infty}e^\lambda
\exp\bigg[-\bigg(1+\frac{r}{\rho(x_0)}\bigg)^{-\theta^{\ast}}\frac{C_2 \lambda^{\ast}}{\|b\|_{\mathrm{BMO}_{\rho,\theta}}}\bigg]d\lambda
\left(1+\frac{r}{\rho(x_0)}\right)^{\eta}\\
&=C_1\cdot w(B)\int_0^{\infty}e^\lambda\cdot e^{-\frac{C_2\lambda}{\gamma}}d\lambda
\left(1+\frac{r}{\rho(x_0)}\right)^{\eta},
\end{split}
\end{equation*}
where $\lambda^{\ast}$ is given by
\begin{equation*}
\lambda^{\ast}=\frac{\lambda\|b\|_{\mathrm{BMO}_{\rho,\theta}}}{\gamma}\bigg(1+\frac{r}{\rho(x_0)}\bigg)^{\theta^{\ast}}.
\end{equation*}
If we take $\gamma$ small enough so that $0<\gamma<C_2$, then the conclusion follows immediately.
\end{proof}

\begin{lem}\label{BMO2}
If $b\in \mathrm{BMO}_{\rho,\theta}(\mathbb R^d)$ with $0<\theta<\infty$, then for any positive integer $k$, there exists a positive constant $C>0$ such that for every ball $B=B(x_0,r)$ in $\mathbb R^d$, we have
\begin{equation*}
\big|b_{2^{k+1}B}-b_B\big|\leq C\cdot(k+1)\left(1+\frac{2^{k+1}r}{\rho(x_0)}\right)^{\theta}.
\end{equation*}
\end{lem}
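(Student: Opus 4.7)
The plan is to telescope the difference $b_{2^{k+1}B}-b_B$ along the dyadic chain of balls $B, 2B, 2^2B, \dots, 2^{k+1}B$, estimating each consecutive difference by the $\mathrm{BMO}_{\rho,\theta}$ norm of $b$ on the larger of the two balls in the pair.

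Concretely, I would write
\begin{equation*}
b_{2^{k+1}B}-b_B = \sum_{j=0}^{k}\bigl(b_{2^{j+1}B}-b_{2^jB}\bigr),
\end{equation*}
and then bound each term by reinterpreting $b_{2^{j+1}B}$ as a constant inside the average over the smaller ball $2^jB$:
\begin{equation*}
\bigl|b_{2^{j+1}B}-b_{2^jB}\bigr|
=\biggl|\frac{1}{|2^jB|}\int_{2^jB}\bigl(b(x)-b_{2^{j+1}B}\bigr)\,dx\biggr|
\leq \frac{2^d}{|2^{j+1}B|}\int_{2^{j+1}B}\bigl|b(x)-b_{2^{j+1}B}\bigr|\,dx.
\end{equation*}
By the very definition of $\|b\|_{\mathrm{BMO}_{\rho,\theta}}$ applied to the ball $2^{j+1}B=B(x_0,2^{j+1}r)$, the right-hand side is bounded by $2^d\|b\|_{\mathrm{BMO}_{\rho,\theta}}(1+2^{j+1}r/\rho(x_0))^{\theta}$.

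Since $j+1\leq k+1$ in each summand, the weight factor $(1+2^{j+1}r/\rho(x_0))^{\theta}$ is dominated by $(1+2^{k+1}r/\rho(x_0))^{\theta}$, and summing over the $k+1$ values of $j$ yields
\begin{equation*}
\bigl|b_{2^{k+1}B}-b_B\bigr|\leq 2^d\|b\|_{\mathrm{BMO}_{\rho,\theta}}\cdot(k+1)\left(1+\frac{2^{k+1}r}{\rho(x_0)}\right)^{\theta},
\end{equation*}
which is the desired bound with $C=2^d\|b\|_{\mathrm{BMO}_{\rho,\theta}}$.

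There is no real obstacle: the argument is the classical BMO telescoping estimate, where the only new ingredient is that the mean-oscillation bound carries the extra factor $(1+r/\rho(x_0))^{\theta}$. The only point one should be careful about is to bound every intermediate factor uniformly by $(1+2^{k+1}r/\rho(x_0))^{\theta}$ before summing, so that the $(k+1)$-growth appears only from the count of telescoping terms, exactly as in the classical case.
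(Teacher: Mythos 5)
Your proposal is correct and follows exactly the paper's argument: telescope $b_{2^{k+1}B}-b_B$ along the dyadic chain, rewrite each consecutive difference as an average of $b-b_{2^{j+1}B}$ over the smaller ball, enlarge to the bigger ball at the cost of a factor $2^d$, apply the $\mathrm{BMO}_{\rho,\theta}$ definition, and dominate every weight factor by the largest one $(1+2^{k+1}r/\rho(x_0))^{\theta}$ before summing. The only cosmetic difference is the choice of summation index ($j=0,\dots,k$ versus $j=1,\dots,k+1$).
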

\begin{proof}
For any positive integer $k$, we have
\begin{equation*}
\begin{split}
\big|b_{2^{k+1}B}-b_B\big|&\leq \sum_{j=1}^{k+1}\big|b_{2^{j}B}-b_{2^{j-1}B}\big|\\
&=\sum_{j=1}^{k+1}\bigg|\frac{1}{|2^{j-1}B|}\int_{2^{j-1}B}\big[b_{2^{j}B}-b(y)\big]dy\bigg|\\
&\leq\sum_{j=1}^{k+1}\frac{2^d}{|2^{j}B|}\int_{2^{j}B}\big|b(y)-b_{2^{j}B}\big|dy\\
&\leq C_{b,d}\|b\|_{\mathrm{BMO}_{\rho,\theta}}\sum_{j=1}^{k+1}\left(1+\frac{2^jr}{\rho(x_0)}\right)^{\theta}.
\end{split}
\end{equation*}
Since for any $1\leq j\leq k+1$, the following estimate
\begin{equation*}
\left(1+\frac{2^jr}{\rho(x_0)}\right)^{\theta}\leq\left(1+\frac{2^{k+1}r}{\rho(x_0)}\right)^{\theta},
\end{equation*}
holds trivially, and hence
\begin{equation*}
\big|b_{2^{k+1}B}-b_B\big|\leq C\sum_{j=1}^{k+1}\left(1+\frac{2^{k+1}r}{\rho(x_0)}\right)^{\theta}=C\cdot(k+1)\left(1+\frac{2^{k+1}r}{\rho(x_0)}\right)^{\theta}.
\end{equation*}
We obtain the desired result. This completes the proof.
\end{proof}

Now, we are in a position to prove our main results in this section.

\begin{proof}[Proof of Theorem $\ref{mainthm:3}$]
We denote by $[b,\mathcal T]$ either $[b,\mathcal R]$ or $[b,\mathcal R^{\ast}]$. By definition, we only need to show that for any given ball $B=B(x_0,r)$ of $\mathbb R^d$, there is some $\vartheta>0$ such that
\begin{equation}\label{Main3}
\bigg(\frac{1}{w(B)^{\kappa}}\int_B\big|[b,\mathcal T]f(x)\big|^pw(x)\,dx\bigg)^{1/p}\leq C\cdot\left(1+\frac{r}{\rho(x_0)}\right)^{\vartheta}
\end{equation}
holds for any $f\in L^{p,\kappa}_{\rho,\infty}(w)$ with $1<p<\infty$ and $0<\kappa<1$, whenever $b$ belongs to $\mathrm{BMO}_{\rho,\infty}(\mathbb R^d)$. Suppose that $f\in L^{p,\kappa}_{\rho,\theta}(w)$ for some $\theta>0$, $w\in A^{\rho,\theta'}_p$ for some $\theta'>0$ as well as $b\in \mathrm{BMO}_{\rho,\theta''}(\mathbb R^d)$ for some $\theta''>0$. We decompose $f$ as
\begin{equation*}
\begin{cases}
f=f_1+f_2\in L^{p,\kappa}_{\rho,\theta}(w);\  &\\
f_1=f\cdot\chi_{2B};\  &\\
f_2=f\cdot\chi_{(2B)^c}.
\end{cases}
\end{equation*}
Then by the linearity of $[b,\mathcal T]$, we write
\begin{equation*}
\begin{split}
\bigg(\frac{1}{w(B)^{\kappa}}\int_B\big|[b,\mathcal T]f(x)\big|^pw(x)\,dx\bigg)^{1/p}
&\leq\bigg(\frac{1}{w(B)^{\kappa}}\int_B\big|[b,\mathcal T]f_1(x)\big|^pw(x)\,dx\bigg)^{1/p}\\
&+\bigg(\frac{1}{w(B)^{\kappa}}\int_B\big|[b,\mathcal T]f_2(x)\big|^pw(x)\,dx\bigg)^{1/p}\\
&:=J_1+J_2.
\end{split}
\end{equation*}
Now we give the estimates for $J_1$, $J_2$, respectively. According to Theorem \ref{cstrong}, we have
\begin{equation*}
\begin{split}
J_1&\leq C\cdot\frac{1}{w(B)^{\kappa/p}}\bigg(\int_{\mathbb R^d}\big|f_1(x)\big|^pw(x)\,dx\bigg)^{1/p}\\
&=C\cdot\frac{1}{w(B)^{\kappa/p}}\bigg(\int_{2B}\big|f(x)\big|^pw(x)\,dx\bigg)^{1/p}\\
&\leq C\|f\|_{L^{p,\kappa}_{\rho,\theta}(w)}
\cdot\frac{w(2B)^{\kappa/p}}{w(B)^{\kappa/p}}\cdot\left(1+\frac{2r}{\rho(x_0)}\right)^{\theta}.
\end{split}
\end{equation*}
Moreover, in view of the inequality \eqref{doubling1}, we get
\begin{equation*}
\begin{split}
J_1&\leq C\|f\|_{L^{p,\kappa}_{\rho,\theta}(w)}
\cdot\left(1+\frac{2r}{\rho(x_0)}\right)^{(p\theta')\cdot(\kappa/p)}\cdot\left(1+\frac{2r}{\rho(x_0)}\right)^{\theta}\\
&\leq C\cdot\left(1+\frac{r}{\rho(x_0)}\right)^{\vartheta'},
\end{split}
\end{equation*}
where $\vartheta':=\theta'\cdot\kappa+\theta$. On the other hand, by the definition \eqref{briesz}, we can see that for any $x\in B(x_0,r)$,
\begin{equation}\label{twoterm}
\begin{split}
\big|[b,\mathcal T]f_2(x)\big|&\leq\int_{\mathbb R^d}\big|b(x)-b(y)\big|\big|\mathcal K(x,y)f_2(y)\big|\,dy~(\hbox{or}~~ \mathcal K^{\ast}(x,y))\\
&\leq\big|b(x)-b_B\big|\int_{\mathbb R^d}\big|\mathcal K(x,y)f_2(y)\big|\,dy+\int_{\mathbb R^d}\big|b(y)-b_B\big|\big|\mathcal K(x,y)f_2(y)\big|\,dy\\
&:=\xi(x)+\zeta(x).
\end{split}
\end{equation}
So we can divide $J_2$ into two parts:
\begin{equation*}
\begin{split}
J_2&\leq\bigg(\frac{1}{w(B)^{\kappa}}\int_B\big|\xi(x)\big|^pw(x)\,dx\bigg)^{1/p}
+\bigg(\frac{1}{w(B)^{\kappa}}\int_B\big|\zeta(x)\big|^pw(x)\,dx\bigg)^{1/p}\\
&:=J_3+J_4.
\end{split}
\end{equation*}
From the pointwise estimate \eqref{Tf2} and \eqref{wang3} in Lemma \ref{BMO1}, it then follows that
\begin{equation*}
\begin{split}
J_3&\leq C\cdot\frac{1}{w(B)^{\kappa/p}}\bigg(\int_B\big|b(x)-b_B\big|^pw(x)\,dx\bigg)^{1/p}\\
&\quad\times\sum_{k=1}^\infty\frac{1}{|2^{k+1}B|}\int_{2^{k+1}B}\left(1+\frac{r}{\rho(x_0)}\right)^{N\cdot\frac{N_0}{N_0+1}}
\left(1+\frac{2^{k+1}r}{\rho(x_0)}\right)^{-N}|f(y)|\,dy\\
&\leq C_b\cdot\frac{w(B)^{1/p}}{w(B)^{\kappa/p}}\left(1+\frac{r}{\rho(x_0)}\right)^{\mu}\\
&\quad\times\sum_{k=1}^\infty\frac{1}{|2^{k+1}B|}\int_{2^{k+1}B}\left(1+\frac{r}{\rho(x_0)}\right)^{N\cdot\frac{N_0}{N_0+1}}
\left(1+\frac{2^{k+1}r}{\rho(x_0)}\right)^{-N}|f(y)|\,dy.
\end{split}
\end{equation*}
Following along the same lines as that of Theorem $\ref{mainthm:1}$, we are able to show that
\begin{equation*}
\begin{split}
J_3&\leq C\|f\|_{L^{p,\kappa}_{\rho,\theta}(w)}\left(1+\frac{r}{\rho(x_0)}\right)^{\mu}\left(1+\frac{r}{\rho(x_0)}\right)^{N\cdot\frac{N_0}{N_0+1}}\\
&\quad\times\sum_{k=1}^\infty\frac{w(B)^{{(1-\kappa)}/p}}{w(2^{k+1}B)^{{(1-\kappa)}/p}}
\left(1+\frac{2^{k+1}r}{\rho(x_0)}\right)^{-N+\theta+\theta'}\\
&\leq C\|f\|_{L^{p,\kappa}_{\rho,\theta}(w)}\left(1+\frac{r}{\rho(x_0)}\right)^{\mu+N\cdot\frac{N_0}{N_0+1}}\\
&\quad\times\sum_{k=1}^\infty\left(\frac{|B|}{|2^{k+1}B|}\right)^{\delta{(1-\kappa)}/p}
\left(1+\frac{2^{k+1}r}{\rho(x_0)}\right)^{-N+\theta+\theta'+\eta{(1-\kappa)}/p}.
\end{split}
\end{equation*}
The last inequality is obtained by using \eqref{compare}. For any $x\in B(x_0,r)$ and any positive integer $N$, similar to the proof of \eqref{T} and \eqref{Tf2}, we can also deduce that
\begin{equation}\label{zeta}
\begin{split}
\zeta(x)
&=\int_{(2B)^c}\big|b(y)-b_B\big|\big|\mathcal K(x,y)f(y)\big|\,dy\\
&\leq C_N\int_{(2B)^c}\big|b(y)-b_B\big|\bigg(1+\frac{|x-y|}{\rho(x)}\bigg)^{-N}\frac{1}{|x-y|^d}\cdot|f(y)|\,dy\\
&\leq C_{N,d}\sum_{k=1}^\infty\int_{2^{k+1}B\backslash 2^k B}\big|b(y)-b_B\big|\bigg(1+\frac{|x_0-y|}{\rho(x)}\bigg)^{-N}\frac{1}{|x_0-y|^d}\cdot|f(y)|\,dy\\
&\leq C_{N,d}\sum_{k=1}^\infty\frac{1}{|2^{k+1}B|}\int_{2^{k+1}B\backslash 2^k B}\big|b(y)-b_B\big|\bigg(1+\frac{2^kr}{\rho(x)}\bigg)^{-N}|f(y)|\,dy\\
&\leq C\sum_{k=1}^\infty\frac{1}{|2^{k+1}B|}\int_{2^{k+1}B}\big|b(y)-b_B\big|\left(1+\frac{r}{\rho(x_0)}\right)^{N\cdot\frac{N_0}{N_0+1}}
\left(1+\frac{2^{k+1}r}{\rho(x_0)}\right)^{-N}|f(y)|\,dy,
\end{split}
\end{equation}
where in the last inequality we have used \eqref{com2} in Lemma \ref{N0}. Hence, by the above pointwise estimate for $\zeta(x)$,
\begin{equation*}
\begin{split}
J_4&\leq C\cdot w(B)^{{(1-\kappa)}/p}\sum_{k=1}^\infty\left(1+\frac{r}{\rho(x_0)}\right)^{N\cdot\frac{N_0}{N_0+1}}
\left(1+\frac{2^{k+1}r}{\rho(x_0)}\right)^{-N}\\
&\times\frac{1}{|2^{k+1}B|}\int_{2^{k+1}B}\big|b(y)-b_B\big|\big|f(y)\big|\,dy.
\end{split}
\end{equation*}
Moreover, for each integer $k\geq 1$,
\begin{align}\label{est1}
&\frac{1}{|2^{k+1}B|}\int_{2^{k+1}B}\big|b(y)-b_B\big|\big|f(y)\big|\,dy\notag\\
&\leq\frac{1}{|2^{k+1}B|}\int_{2^{k+1}B}\big|b(y)-b_{2^{k+1}B}\big|\big|f(y)\big|\,dy\notag\\
&+\frac{1}{|2^{k+1}B|}\int_{2^{k+1}B}\big|b_{2^{k+1}B}-b_B\big|\big|f(y)\big|\,dy.
\end{align}
By using H\"older's inequality, the first term of the expression \eqref{est1} is bounded by
\begin{equation*}
\begin{split}
&\frac{1}{|2^{k+1}B|}\bigg(\int_{2^{k+1}B}\big|f(y)\big|^pw(y)\,dy\bigg)^{1/p}
\bigg(\int_{2^{k+1}B}\big|b(y)-b_{2^{k+1}B}\big|^{p'}w(y)^{-{p'}/p}\,dy\bigg)^{1/{p'}}\\
&\leq C\|f\|_{L^{p,\kappa}_{\rho,\theta}(w)}\cdot\frac{w(2^{k+1}B)^{{\kappa}/p}}{|2^{k+1}B|}\left(1+\frac{2^{k+1}r}{\rho(x_0)}\right)^{\theta}
\bigg(\int_{2^{k+1}B}\big|b(y)-b_{2^{k+1}B}\big|^{p'}w(y)^{-{p'}/p}\,dy\bigg)^{1/{p'}}.
\end{split}
\end{equation*}
Since $w\in A^{\rho,\theta'}_p$ with $0<\theta'<\infty$ and $1<p<\infty$, then by the definition of $A^{\rho,\theta'}_p$, it can be easily shown that $w\in A^{\rho,\theta'}_p$ if and only if $w^{-{p'}/p}\in A^{\rho,\theta'}_{p'}$, where $1/p+1/{p'}=1$ (see \cite{tang}). If we denote $v=w^{-{p'}/p}$, then $v\in A^{\rho,\theta'}_{p'}$. This fact together with Lemma \ref{BMO1} implies
\begin{equation*}
\begin{split}
&\bigg(\int_{2^{k+1}B}\big|b(y)-b_{2^{k+1}B}\big|^{p'}v(y)\,dy\bigg)^{1/{p'}}\\
&\leq C_{b}\cdot v(2^{k+1}B)^{1/{p'}}\left(1+\frac{2^{k+1}r}{\rho(x_0)}\right)^{\mu}\\
&=C_b\cdot\bigg(\int_{2^{k+1}B}w(y)^{-{p'}/p}\,dy\bigg)^{1/{p'}}\left(1+\frac{2^{k+1}r}{\rho(x_0)}\right)^{\mu}\\
&\leq C_{b,w}\cdot\frac{|2^{k+1}B|}{w(2^{k+1}B)^{1/p}}\left(1+\frac{2^{k+1}r}{\rho(x_0)}\right)^{\theta'}
\left(1+\frac{2^{k+1}r}{\rho(x_0)}\right)^{\mu}.
\end{split}
\end{equation*}
Therefore, the first term of the expression \eqref{est1} can be bounded by a constant times
\begin{equation*}
\frac{w(2^{k+1}B)^{{\kappa}/p}}{w(2^{k+1}B)^{1/p}}\left(1+\frac{2^{k+1}r}{\rho(x_0)}\right)^{\theta+\theta'+\mu}.
\end{equation*}
Since $b\in \mathrm{BMO}_{\rho,\theta''}(\mathbb R^d)$ with $0<\theta''<\infty$, then by Lemma \ref{BMO2}, H\"older's inequality and the $A^{\rho,\theta'}_p$ condition on $w$, the latter term of the expression \eqref{est1} can be estimated by
\begin{equation*}
\begin{split}
&C_b(k+1)\left(1+\frac{2^{k+1}r}{\rho(x_0)}\right)^{\theta''}\frac{1}{|2^{k+1}B|}\int_{2^{k+1}B}\big|f(y)\big|\,dy\\
&\leq C_b(k+1)\left(1+\frac{2^{k+1}r}{\rho(x_0)}\right)^{\theta''}\frac{1}{|2^{k+1}B|}\bigg(\int_{2^{k+1}B}\big|f(y)\big|^pw(y)\,dy\bigg)^{1/p}
\bigg(\int_{2^{k+1}B}w(y)^{-{p'}/p}\,dy\bigg)^{1/{p'}}\\
&\leq C\|f\|_{L^{p,\kappa}_{\rho,\theta}(w)}\cdot(k+1)\frac{w(2^{k+1}B)^{{\kappa}/p}}{w(2^{k+1}B)^{1/p}}
\left(1+\frac{2^{k+1}r}{\rho(x_0)}\right)^{\theta+\theta'+\theta''}.
\end{split}
\end{equation*}
Consequently,
\begin{align}\label{est2}
&\frac{1}{|2^{k+1}B|}\int_{2^{k+1}B}\big|b(y)-b_B\big|\big|f(y)\big|\,dy\notag\\
&\leq C\|f\|_{L^{p,\kappa}_{\rho,\theta}(w)}\cdot(k+1)\frac{w(2^{k+1}B)^{{\kappa}/p}}{w(2^{k+1}B)^{1/p}}
\left(1+\frac{2^{k+1}r}{\rho(x_0)}\right)^{\theta+\theta'+\theta''+\mu}.
\end{align}
Thus, in view of \eqref{est2},
\begin{equation*}
\begin{split}
J_4&\leq C\|f\|_{L^{p,\kappa}_{\rho,\theta}(w)}\cdot w(B)^{{(1-\kappa)}/p}\sum_{k=1}^\infty(k+1)\left(1+\frac{r}{\rho(x_0)}\right)^{N\cdot\frac{N_0}{N_0+1}}
\left(1+\frac{2^{k+1}r}{\rho(x_0)}\right)^{-N}\\
&\times\frac{1}{w(2^{k+1}B)^{{(1-\kappa)}/p}}\left(1+\frac{2^{k+1}r}{\rho(x_0)}\right)^{\theta+\theta'+\theta''+\mu}\\
&=C\|f\|_{L^{p,\kappa}_{\rho,\theta}(w)}
\left(1+\frac{r}{\rho(x_0)}\right)^{N\cdot\frac{N_0}{N_0+1}}\sum_{k=1}^\infty(k+1)\frac{w(B)^{{(1-\kappa)}/p}}{w(2^{k+1}B)^{{(1-\kappa)}/p}}
\left(1+\frac{2^{k+1}r}{\rho(x_0)}\right)^{-N+\theta+\theta'+\theta''+\mu}.
\end{split}
\end{equation*}
Notice that $w\in A^{\rho,\theta'}_p$ with $0<\theta'<\infty$. A further application of \eqref{compare} yields
\begin{equation*}
\begin{split}
J_4&\leq C\|f\|_{L^{p,\kappa}_{\rho,\theta}(w)}
\left(1+\frac{r}{\rho(x_0)}\right)^{N\cdot\frac{N_0}{N_0+1}}\sum_{k=1}^\infty(k+1)\left(\frac{|B|}{|2^{k+1}B|}\right)^{\delta{(1-\kappa)}/p}\\
&\quad\times\left(1+\frac{2^{k+1}r}{\rho(x_0)}\right)^{\eta{(1-\kappa)}/p}\left(1+\frac{2^{k+1}r}{\rho(x_0)}\right)^{-N+\theta+\theta'+\theta''+\mu}\\
&=C\|f\|_{L^{p,\kappa}_{\rho,\theta}(w)}
\left(1+\frac{r}{\rho(x_0)}\right)^{N\cdot\frac{N_0}{N_0+1}}\sum_{k=1}^\infty(k+1)\left(\frac{|B|}{|2^{k+1}B|}\right)^{\delta{(1-\kappa)}/p}\\
&\quad\times\left(1+\frac{2^{k+1}r}{\rho(x_0)}\right)^{-N+\theta+\theta'+\theta''+\mu+\eta{(1-\kappa)}/p}.
\end{split}
\end{equation*}
Combining the above estimates for $J_3$ and $J_4$, we get
\begin{equation*}
\begin{split}
J_2\leq J_3+J_4&\leq C\|f\|_{L^{p,\kappa}_{\rho,\theta}(w)}\left(1+\frac{r}{\rho(x_0)}\right)^{\mu+N\cdot\frac{N_0}{N_0+1}}\\
&\times\sum_{k=1}^\infty(k+1)\left(\frac{|B|}{|2^{k+1}B|}\right)^{\delta{(1-\kappa)}/p}
\left(1+\frac{2^{k+1}r}{\rho(x_0)}\right)^{-N+\theta+\theta'+\theta''+\mu+\eta{(1-\kappa)}/p}.
\end{split}
\end{equation*}
By choosing $N$ large enough so that $N>\theta+\theta'+\theta''+\mu+\eta{(1-\kappa)}/p$, we thus have
\begin{equation*}
\begin{split}
J_2&\leq C\left(1+\frac{r}{\rho(x_0)}\right)^{\mu+N\cdot\frac{N_0}{N_0+1}}
\sum_{k=1}^\infty(k+1)\left(\frac{|B|}{|2^{k+1}B|}\right)^{\delta{(1-\kappa)}/p}\\
&\leq C\left(1+\frac{r}{\rho(x_0)}\right)^{\mu+N\cdot\frac{N_0}{N_0+1}}.
\end{split}
\end{equation*}
Finally, collecting the above estimates for $J_1,J_2$, and letting $\vartheta=\max\big\{\vartheta',\mu+N\cdot\frac{N_0}{N_0+1}\big\}$, we obtain the desired result \eqref{Main3}. The proof of Theorem \ref{mainthm:3} is finished.
\end{proof}

\begin{proof}[Proof of Theorem $\ref{mainthm:4}$]
We denote by $[b,\mathcal T]$ either $[b,\mathcal R]$ or $[b,\mathcal R^{\ast}]$. We are going to prove that for any given $\lambda>0$ and any given ball $B=B(x_0,r)$ of $\mathbb R^d$, there is some $\vartheta>0$ such that the following inequality
\begin{equation}\label{Main4}
\frac{1}{w(B)^{\kappa}}\cdot w\big(\big\{x\in B:|[b,\mathcal T]f(x)|>\lambda\big\}\big)
\leq C\left(1+\frac{r}{\rho(x_0)}\right)^{\vartheta}
\bigg\|\Phi\bigg(\frac{|f|}{\lambda}\bigg)\bigg\|_{(L\log L)^{1,\kappa}_{\rho,\theta}(w)}
\end{equation}
holds for those functions $f$ such that $\Phi(|f|)\in(L\log L)^{1,\kappa}_{\rho,\theta}(w)$ with some fixed $\theta>0$.Now assume that $w\in A^{\rho,\theta'}_1$ for some $\theta'>0$ and $b\in\mathrm{BMO}_{\rho,\theta''}(\mathbb R^d)$ for some $\theta''>0$. As before, we decompose $f$ as
\begin{equation*}
f=f_1+f_2;\quad f_1=f\cdot\chi_{2B},~f_2=f\cdot\chi_{(2B)^c}.
\end{equation*}
Then for any given $\lambda>0$, by the linearity of $[b,\mathcal T]$, we can write
\begin{equation*}
\begin{split}
&\frac{1}{w(B)^{\kappa}}\cdot w\big(\big\{x\in B:|[b,\mathcal T]f(x)|>\lambda\big\}\big)\\
&\leq\frac{1}{w(B)^{\kappa}}\cdot w\big(\big\{x\in B:|[b,\mathcal T](f_1)(x)|>\lambda/2\big\}\big)\\
&+\frac{1}{w(B)^{\kappa}}\cdot w\big(\big\{x\in B:|[b,\mathcal T](f_2)(x)|>\lambda/2\big\}\big)\\
&:=J'_1+J'_2.
\end{split}
\end{equation*}
Let us first estimate the term $J'_1$. By using Theorem \ref{cweak}, we get
\begin{equation*}
\begin{split}
J'_1&=\frac{1}{w(B)^{\kappa}}\cdot w\big(\big\{x\in B:|[b,\mathcal T](f_1)(x)|>\lambda/2\big\}\big)\\
&\leq C\cdot\frac{1}{w(B)^{\kappa}}\bigg[\int_{\mathbb R^d}\Phi\left(\frac{|f_1(x)|}{\lambda}\right)\cdot w(x)\,dx\bigg]\\
&=C\cdot\frac{1}{w(B)^{\kappa}}\bigg[\int_{2B}\Phi\left(\frac{|f(x)|}{\lambda}\right)\cdot w(x)\,dx\bigg].
\end{split}
\end{equation*}
A further application of \eqref{small} yields
\begin{equation*}
\begin{split}
J'_1&\leq C\cdot\frac{w(2B)}{w(B)^{\kappa}}
\bigg\|\Phi\bigg(\frac{|f|}{\lambda}\bigg)\bigg\|_{L\log L(w),2B}\\
&\leq C\cdot\frac{w(2B)^{\kappa}}{w(B)^{\kappa}}\cdot\left(1+\frac{2r}{\rho(x_0)}\right)^{\theta}
\bigg\|\Phi\bigg(\frac{|f|}{\lambda}\bigg)\bigg\|_{(L\log L)^{1,\kappa}_{\rho,\theta}(w)}\\
&\leq C\cdot\left(1+\frac{2r}{\rho(x_0)}\right)^{\kappa\cdot\theta'}\cdot\left(1+\frac{2r}{\rho(x_0)}\right)^{\theta}
\bigg\|\Phi\bigg(\frac{|f|}{\lambda}\bigg)\bigg\|_{(L\log L)^{1,\kappa}_{\rho,\theta}(w)},
\end{split}
\end{equation*}
where the last inequality is due to \eqref{doubling2}. If we denote $\vartheta'=\kappa\cdot\theta'+\theta$, then
\begin{equation*}
\begin{split}
J'_1&\leq C\cdot\left(1+\frac{2r}{\rho(x_0)}\right)^{\vartheta'}
\bigg\|\Phi\bigg(\frac{|f|}{\lambda}\bigg)\bigg\|_{(L\log L)^{1,\kappa}_{\rho,\theta}(w)}
\leq C\cdot\left(1+\frac{r}{\rho(x_0)}\right)^{\vartheta'}
\bigg\|\Phi\bigg(\frac{|f|}{\lambda}\bigg)\bigg\|_{(L\log L)^{1,\kappa}_{\rho,\theta}(w)}
\end{split}
\end{equation*}
as desired. Next let us deal with the term $J'_2$. Taking into account of \eqref{twoterm}, we can divide it into two parts, namely,
\begin{equation*}
\begin{split}
J'_2&=\frac{1}{w(B)^{\kappa}}\cdot w\big(\big\{x\in B:|[b,\mathcal T](f_2)(x)|>\lambda/2\big\}\big)\\
&\leq\frac{1}{w(B)^{\kappa}}\cdot w\big(\big\{x\in B:|\xi(x)|>\lambda/4\big\}\big)+
\frac{1}{w(B)^{\kappa}}\cdot w\big(\big\{x\in B:|\zeta(x)|>\lambda/4\big\}\big)\\
&:=J'_3+J'_4,
\end{split}
\end{equation*}
where
\begin{equation*}
\begin{split}
\xi(x)&=\big|b(x)-b_B\big|\int_{\mathbb R^d}\big|\mathcal K(x,y)f_2(y)\big|\,dy,\\
\&\quad \zeta(x)&=\int_{\mathbb R^d}\big|b(y)-b_B\big|\big|\mathcal K(x,y)f_2(y)\big|\,dy.
\end{split}
\end{equation*}
Since $b\in\mathrm{BMO}_{\rho,\theta''}(\mathbb R^d)$ for some $\theta''>0$, from the pointwise inequality \eqref{Tf2} and Chebyshev's inequality, we then have
\begin{equation*}
\begin{split}
J'_3&\leq\frac{1}{w(B)^{\kappa}}\cdot\frac{\,4\,}{\lambda}\bigg(\int_B\big|\xi(x)\big|w(x)\,dx\bigg)\\
&\leq C\cdot\frac{1}{w(B)^{\kappa}}\bigg(\int_B\big|b(x)-b_B\big|w(x)\,dx\bigg)\\
&\quad\times\sum_{k=1}^\infty\frac{1}{|2^{k+1}B|}\int_{2^{k+1}B}\left(1+\frac{r}{\rho(x_0)}\right)^{N\cdot\frac{N_0}{N_0+1}}
\left(1+\frac{2^{k+1}r}{\rho(x_0)}\right)^{-N}\frac{|f(y)|}{\lambda}\,dy\\
&\leq C_b\cdot\frac{w(B)}{w(B)^{\kappa}}\left(1+\frac{r}{\rho(x_0)}\right)^{\mu}\\
&\quad\times\sum_{k=1}^\infty\frac{1}{|2^{k+1}B|}\int_{2^{k+1}B}\left(1+\frac{r}{\rho(x_0)}\right)^{N\cdot\frac{N_0}{N_0+1}}
\left(1+\frac{2^{k+1}r}{\rho(x_0)}\right)^{-N}\frac{|f(y)|}{\lambda}\,dy,
\end{split}
\end{equation*}
where in the last inequality we have used \eqref{wang3} in Lemma \ref{BMO1}. Moreover, it follows directly from the condition $A^{\rho,\theta'}_1$ that for each integer $k\geq1$,
\begin{equation*}
\begin{split}
&\frac{1}{|2^{k+1}B|}\int_{2^{k+1}B}\frac{|f(y)|}{\lambda}\,dy\\
&\leq \frac{C_w}{w(2^{k+1}B)}\cdot\underset{y\in 2^{k+1}B}{\mbox{ess\,inf}}\;w(y)
\bigg(\int_{2^{k+1}B}\frac{|f(y)|}{\lambda}\,dy\bigg)\left(1+\frac{2^{k+1}r}{\rho(x_0)}\right)^{\theta'}\\
&\leq \frac{C_w}{w(2^{k+1}B)}\bigg(\int_{2^{k+1}B}\frac{|f(y)|}{\lambda}\cdot w(y)\,dy\bigg)\left(1+\frac{2^{k+1}r}{\rho(x_0)}\right)^{\theta'}.
\end{split}
\end{equation*}
Notice also that trivially
\begin{equation}\label{phit}
t\leq t\cdot(1+\log^+t)=\Phi(t),\quad\hbox{for any} ~~t>0.
\end{equation}
This fact along with \eqref{small} implies that for each integer $k\geq1$,
\begin{equation*}
\begin{split}
&\frac{1}{|2^{k+1}B|}\int_{2^{k+1}B}\frac{|f(y)|}{\lambda}\,dy\\
&\leq \frac{C_w}{w(2^{k+1}B)}\bigg(\int_{2^{k+1}B}\Phi\left(\frac{|f(y)|}{\lambda}\right)\cdot w(y)\,dy\bigg)\left(1+\frac{2^{k+1}r}{\rho(x_0)}\right)^{\theta'}\\
&\leq C\cdot\left(1+\frac{2^{k+1}r}{\rho(x_0)}\right)^{\theta'}
\bigg\|\Phi\bigg(\frac{|f|}{\lambda}\bigg)\bigg\|_{L\log L(w),2^{k+1}B}\\
&\leq C\cdot\frac{1}{w(2^{k+1}B)^{1-\kappa}}
\left(1+\frac{2^{k+1}r}{\rho(x_0)}\right)^{\theta}\left(1+\frac{2^{k+1}r}{\rho(x_0)}\right)^{\theta'}
\bigg\|\Phi\bigg(\frac{|f|}{\lambda}\bigg)\bigg\|_{(L\log L)^{1,\kappa}_{\rho,\theta}(w)}.
\end{split}
\end{equation*}
Consequently,
\begin{equation*}
\begin{split}
J'_3&\leq C\left(1+\frac{r}{\rho(x_0)}\right)^{\mu}
\left(1+\frac{r}{\rho(x_0)}\right)^{N\cdot\frac{N_0}{N_0+1}}\\
&\quad\times\sum_{k=1}^\infty \frac{w(B)^{1-\kappa}}{w(2^{k+1}B)^{1-\kappa}}
\left(1+\frac{2^{k+1}r}{\rho(x_0)}\right)^{-N+\theta+\theta'}
\bigg\|\Phi\bigg(\frac{|f|}{\lambda}\bigg)\bigg\|_{(L\log L)^{1,\kappa}_{\rho,\theta}(w)}.
\end{split}
\end{equation*}
Since $w\in A^{\rho,\theta'}_1$ with $0<\theta'<\infty$, then there exist two positive numbers $\delta',\eta'>0$ such that \eqref{compare} holds. Therefore,
\begin{equation*}
\begin{split}
J'_3&\leq C\left(1+\frac{r}{\rho(x_0)}\right)^{\mu+N\cdot\frac{N_0}{N_0+1}}\\
&\times\sum_{k=1}^\infty \left(\frac{|B|}{|2^{k+1}B|}\right)^{\delta'{(1-\kappa)}}
\left(1+\frac{2^{k+1}r}{\rho(x_0)}\right)^{\eta'{(1-\kappa)}}\left(1+\frac{2^{k+1}r}{\rho(x_0)}\right)^{-N+\theta+\theta'}
\bigg\|\Phi\bigg(\frac{|f|}{\lambda}\bigg)\bigg\|_{(L\log L)^{1,\kappa}_{\rho,\theta}(w)}\\
&=C\left(1+\frac{r}{\rho(x_0)}\right)^{\mu+N\cdot\frac{N_0}{N_0+1}}\\
&\times\sum_{k=1}^\infty \left(\frac{|B|}{|2^{k+1}B|}\right)^{\delta'{(1-\kappa)}}
\left(1+\frac{2^{k+1}r}{\rho(x_0)}\right)^{-N+\theta+\theta'+\eta'{(1-\kappa)}}
\bigg\|\Phi\bigg(\frac{|f|}{\lambda}\bigg)\bigg\|_{(L\log L)^{1,\kappa}_{\rho,\theta}(w)}.
\end{split}
\end{equation*}
On the other hand, it follows from the pointwise inequality \eqref{zeta} and Chebyshev's inequality that
\begin{equation*}
\begin{split}
J'_4&\leq\frac{1}{w(B)^{\kappa}}\cdot\frac{\,4\,}{\lambda}\bigg(\int_B\big|\zeta(x)\big|w(x)\,dx\bigg)\\
&\leq C\cdot\frac{w(B)}{w(B)^{\kappa}}\sum_{k=1}^\infty\left(1+\frac{r}{\rho(x_0)}\right)^{N\cdot\frac{N_0}{N_0+1}}
\left(1+\frac{2^{k+1}r}{\rho(x_0)}\right)^{-N}\\
&\quad\times \frac{1}{|2^{k+1}B|}\int_{2^{k+1}B}\big|b(y)-b_B\big|\frac{|f(y)|}{\lambda}\,dy\\
&\leq C\cdot\frac{w(B)}{w(B)^{\kappa}}\sum_{k=1}^\infty\left(1+\frac{r}{\rho(x_0)}\right)^{N\cdot\frac{N_0}{N_0+1}}
\left(1+\frac{2^{k+1}r}{\rho(x_0)}\right)^{-N}\\
&\quad\times \frac{1}{|2^{k+1}B|}\int_{2^{k+1}B}\big|b(y)-b_B\big|\Phi\left(\frac{|f(y)|}{\lambda}\right)\,dy,
\end{split}
\end{equation*}
where the last inequality follows from \eqref{phit}. Furthermore, by the definition of $A^{\rho,\theta'}_1$, we compute
\begin{equation}\label{est3}
\begin{split}
&\frac{1}{|2^{k+1}B|}\int_{2^{k+1}B}\big|b(y)-b_B\big|\Phi\left(\frac{|f(y)|}{\lambda}\right)\,dy\\
&\leq\frac{C_w}{w(2^{k+1}B)}\cdot\underset{y\in 2^{k+1}B}{\mbox{ess\,inf}}\;w(y)
\int_{2^{k+1}B}\big|b(y)-b_B\big|\Phi\left(\frac{|f(y)|}{\lambda}\right)\,dy\left(1+\frac{2^{k+1}r}{\rho(x_0)}\right)^{\theta'}\\
&\leq\frac{C_w}{w(2^{k+1}B)}\int_{2^{k+1}B}\big|b(y)-b_{2^{k+1}B}\big|\Phi\left(\frac{|f(y)|}{\lambda}\right)w(y)\,dy
\left(1+\frac{2^{k+1}r}{\rho(x_0)}\right)^{\theta'}\\
&+\frac{C_w}{w(2^{k+1}B)}\int_{2^{k+1}B}\big|b_{2^{k+1}B}-b_B\big|\Phi\bigg(\frac{|f(y)|}{\lambda}\bigg)w(y)\,dy
\left(1+\frac{2^{k+1}r}{\rho(x_0)}\right)^{\theta'}.
\end{split}
\end{equation}
By using generalized H\"older inequality \eqref{gholder}, the first term of the expression \eqref{est3} is bounded by
\begin{equation*}
\begin{split}
&C\big\|b-b_{2^{k+1}B}\big\|_{\exp L(w),2^{k+1}B}
\bigg\|\Phi\bigg(\frac{|f|}{\,\lambda\,}\bigg)\bigg\|_{L\log L(w),2^{k+1}B}
\left(1+\frac{2^{k+1}r}{\rho(x_0)}\right)^{\theta'}\\
&\leq C\bigg\|\Phi\bigg(\frac{|f|}{\,\lambda\,}\bigg)\bigg\|_{L\log L(w),2^{k+1}B}
\left(1+\frac{2^{k+1}r}{\rho(x_0)}\right)^{\eta'}\left(1+\frac{2^{k+1}r}{\rho(x_0)}\right)^{\theta'},
\end{split}
\end{equation*}
where in the last inequality we have used the fact that
\begin{equation*}
\big\|b-b_{B}\big\|_{\exp L(w),B}\leq C\left(1+\frac{r}{\rho(x_0)}\right)^{\eta'},\quad \mbox{for any ball }B=B(x_0,r)\subset\mathbb R^n,
\end{equation*}
which is equivalent to the inequality \eqref{wang2} in Lemma \ref{BMO3}. By Lemma \ref{BMO2} and \eqref{small}, the latter term of the expression \eqref{est3} can be estimated by
\begin{equation*}
\begin{split}
& C_b \frac{(k+1)}{w(2^{k+1}B)}\int_{2^{k+1}B}\Phi\bigg(\frac{|f(y)|}{\lambda}\bigg)w(y)\,dy
\left(1+\frac{2^{k+1}r}{\rho(x_0)}\right)^{\theta''}\left(1+\frac{2^{k+1}r}{\rho(x_0)}\right)^{\theta'}\\
&\leq C(k+1)\bigg\|\Phi\bigg(\frac{|f|}{\,\lambda\,}\bigg)\bigg\|_{L\log L(w),2^{k+1}B}
\left(1+\frac{2^{k+1}r}{\rho(x_0)}\right)^{\theta'}\left(1+\frac{2^{k+1}r}{\rho(x_0)}\right)^{\theta''}.
\end{split}
\end{equation*}
Consequently,
\begin{equation*}
\begin{split}
J'_4&\leq C\cdot\frac{w(B)}{w(B)^{\kappa}}\sum_{k=1}^\infty\left(1+\frac{r}{\rho(x_0)}\right)^{N\cdot\frac{N_0}{N_0+1}}
\left(1+\frac{2^{k+1}r}{\rho(x_0)}\right)^{-N}\\
&\quad\times(k+1)\bigg\|\Phi\bigg(\frac{|f|}{\,\lambda\,}\bigg)\bigg\|_{L\log L(w),2^{k+1}B}
\left(1+\frac{2^{k+1}r}{\rho(x_0)}\right)^{\eta'+\theta'+\theta''}\\
&\leq C\left(1+\frac{r}{\rho(x_0)}\right)^{N\cdot\frac{N_0}{N_0+1}}\sum_{k=1}^\infty(k+1)\frac{w(B)^{{(1-\kappa)}}}{w(2^{k+1}B)^{{(1-\kappa)}}}
\bigg\|\Phi\bigg(\frac{|f|}{\lambda}\bigg)\bigg\|_{(L\log L)^{1,\kappa}_{\rho,\theta}(w)}\\
\end{split}
\end{equation*}
\begin{equation*}
\begin{split}
&\times\left(1+\frac{2^{k+1}r}{\rho(x_0)}\right)^{-N+\eta'+\theta+\theta'+\theta''}\\
&\leq C\left(1+\frac{r}{\rho(x_0)}\right)^{N\cdot\frac{N_0}{N_0+1}}\sum_{k=1}^\infty(k+1)\left(\frac{|B|}{|2^{k+1}B|}\right)^{\delta'{(1-\kappa)}}\\
&\times\left(1+\frac{2^{k+1}r}{\rho(x_0)}\right)^{-N+\eta'+\theta+\theta'+\theta''+\eta'(1-\kappa)}
\bigg\|\Phi\bigg(\frac{|f|}{\lambda}\bigg)\bigg\|_{(L\log L)^{1,\kappa}_{\rho,\theta}(w)}.
\end{split}
\end{equation*}
Hence, combining the above estimates for $J'_3$ and $J'_4$, we have
\begin{equation*}
\begin{split}
J'_2&\leq J'_3+J'_4\leq C\left(1+\frac{r}{\rho(x_0)}\right)^{\mu+N\cdot\frac{N_0}{N_0+1}}
\sum_{k=1}^\infty(k+1)\left(\frac{|B|}{|2^{k+1}B|}\right)^{\delta'{(1-\kappa)}}\\
&\times\left(1+\frac{2^{k+1}r}{\rho(x_0)}\right)^{-N+\eta'+\theta+\theta'+\theta''+\eta'{(1-\kappa)}}
\bigg\|\Phi\bigg(\frac{|f|}{\lambda}\bigg)\bigg\|_{(L\log L)^{1,\kappa}_{\rho,\theta}(w)}.
\end{split}
\end{equation*}
Now $N$ can be chosen sufficiently large such that $N>\eta'+\theta+\theta'+\theta''+\eta'(1-\kappa)$, and hence the above series is convergent. Finally,
\begin{equation*}
\begin{split}
J'_2&\leq C\left(1+\frac{r}{\rho(x_0)}\right)^{\mu+N\cdot\frac{N_0}{N_0+1}}\sum_{k=1}^\infty(k+1)\left(\frac{|B|}{|2^{k+1}B|}\right)^{\delta'{(1-\kappa)}}
\bigg\|\Phi\bigg(\frac{|f|}{\lambda}\bigg)\bigg\|_{(L\log L)^{1,\kappa}_{\rho,\theta}(w)}\\
&\leq C\left(1+\frac{r}{\rho(x_0)}\right)^{\mu+N\cdot\frac{N_0}{N_0+1}}
\bigg\|\Phi\bigg(\frac{|f|}{\lambda}\bigg)\bigg\|_{(L\log L)^{1,\kappa}_{\rho,\theta}(w)}.
\end{split}
\end{equation*}
Fix this $N$ and set $\vartheta=\max\big\{\vartheta',\mu+N\cdot\frac{N_0}{N_0+1}\big\}$. Thus, combining the above estimates for $J'_1$ and $J'_2$, the inequality \eqref{Main4} is proved and then the proof of Theorem \ref{mainthm:4} is finished.
\end{proof}
The higher order commutators formed by a $\mathrm{BMO}_{\rho,\infty}(\mathbb R^d)$ function $b$ and the operators $\mathcal R$ and its adjoint $\mathcal R^{\ast}$ are usually defined by
\begin{equation}
\begin{cases}
[b,\mathcal R]_mf(x):=\displaystyle\int_{\mathbb R^n}\big[b(x)-b(y)\big]^m \mathcal K(x,y)f(y)\,dy,\quad x\in\mathbb R^d;&\\
[b,\mathcal R^{\ast}]_mf(x):=\displaystyle\int_{\mathbb R^n}\big[b(x)-b(y)\big]^m \mathcal K^{\ast}(x,y)f(y)\,dy,\quad x\in\mathbb R^d;&\\
\quad m=1,2,3,\dots.&
\end{cases}
\end{equation}
Let $\mathcal T$ denote $\mathcal R$ or $\mathcal R^{\ast}$. Obviously, $[b,\mathcal T]_1=[b,\mathcal T]$ which is just the linear commutator \eqref{briesz}, and
\begin{equation*}
[b,\mathcal T]_m=\big[b,[b,\mathcal T]_{m-1}\big],\quad m=2,3,\dots.
\end{equation*}
By induction on $m$, we are able to show that the conclusions of Theorems \ref{mainthm:3} and \ref{mainthm:4} also hold for the higher order commutators $[b,\mathcal T]_m$ with $m\geq2$. The details are omitted here.

\begin{thm}
Let $1<p<\infty$, $0<\kappa<1$ and $w\in A^{\rho,\infty}_p$. If $V\in RH_q$ with $q\geq d$, then for any positive integer $m\geq2$, the higher order commutators $[b,\mathcal R]_m$ and $[b,\mathcal R^{\ast}]_m$ are all bounded on $L^{p,\kappa}_{\rho,\infty}(w)$, whenever $b\in\mathrm{BMO}_{\rho,\infty}(\mathbb R^d)$.
\end{thm}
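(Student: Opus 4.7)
The plan is induction on $m$, the case $m=1$ being Theorem \ref{mainthm:3}. Writing $[b,\mathcal T]_m$ for either $[b,\mathcal R]_m$ or $[b,\mathcal R^\ast]_m$ and fixing $w\in A^{\rho,\theta'}_p$, $b\in\mathrm{BMO}_{\rho,\theta''}(\mathbb R^d)$, $f\in L^{p,\kappa}_{\rho,\theta}(w)$, and a ball $B=B(x_0,r)$, I would mimic the proof of Theorem \ref{mainthm:3}: split $f=f_1+f_2$ with $f_1=f\chi_{2B}$ and $f_2=f\chi_{(2B)^c}$, and estimate the corresponding $J_1^{(m)}$ and $J_2^{(m)}$ in the decomposition of $\bigl(w(B)^{-\kappa}\int_B|[b,\mathcal T]_m f|^p w\bigr)^{1/p}$.

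For the local contribution $J_1^{(m)}$, the goal is to apply the weighted $L^p(w)$-boundedness of $[b,\mathcal T]_m$ on $A^{\rho,\infty}_p$ weights. This higher-order analogue of Theorem \ref{cstrong} follows by iterating that theorem through the identity $[b,\mathcal T]_m=[b,[b,\mathcal T]_{m-1}]$, together with the analysis of \cite{bong2}. Once this $L^p$-bound is imported, the doubling-type inequality $w(2B)^{\kappa/p}\leq C(1+2r/\rho(x_0))^{\theta'\kappa}w(B)^{\kappa/p}$ from \eqref{doubling1} yields
\begin{equation*}
J_1^{(m)}\leq C\bigl(1+r/\rho(x_0)\bigr)^{\kappa\theta'+\theta}
\end{equation*}
exactly as for $J_1$ in Theorem \ref{mainthm:3}.

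For the global contribution $J_2^{(m)}$, the key device is the binomial expansion
\begin{equation*}
(b(x)-b(y))^m=\sum_{j=0}^m\binom{m}{j}(-1)^{m-j}(b(x)-b_B)^j(b(y)-b_B)^{m-j},
\end{equation*}
which decomposes $[b,\mathcal T]_m f_2(x)$ into $m+1$ pieces of the form
\begin{equation*}
|b(x)-b_B|^j\int_{(2B)^c}|b(y)-b_B|^{m-j}\,|\mathcal K(x,y)|\,|f(y)|\,dy.
\end{equation*}
Each piece can be controlled by the same annular decomposition $(2B)^c=\bigcup_{k\geq1}(2^{k+1}B\setminus 2^kB)$, Lemma \ref{kernel}, and \eqref{com2} used to obtain \eqref{Tf2} and \eqref{zeta}. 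Two new ingredients are needed: a power version of Lemma \ref{BMO1} of the form $\bigl(\int_B|b-b_B|^{jp}w\bigr)^{1/p}\leq C w(B)^{1/p}(1+r/\rho(x_0))^{j\theta^\ast+\eta/p}$, which is a direct tail-integration consequence of Proposition \ref{wangbmo}; and the elementary estimate $|b_{2^{k+1}B}-b_B|^j\leq C_m(k+1)^j(1+2^{k+1}r/\rho(x_0))^{j\theta''}$ obtained by raising Lemma \ref{BMO2} to the $j$-th power. Combined with H\"older's inequality and the fact that $w^{1-p'}\in A^{\rho,\theta'}_{p'}$ (as exploited in the proof of Theorem \ref{mainthm:3}), these produce an analogue of \eqref{est2} carrying an extra polynomial factor $(k+1)^m$ and an enlarged $(1+2^{k+1}r/\rho(x_0))^{m(\theta'+\theta''+\mu)}$ correction in the $k$-th term.

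The main obstacle is purely exponent bookkeeping: the integer $N$ in Lemma \ref{kernel} must be taken large enough that, after the $m$-fold binomial blow-up, the exponent of $(1+2^{k+1}r/\rho(x_0))$ in the $k$-th annulus is still strictly negative. Since $(k+1)^m$ grows polynomially while the annular ratio $(|B|/|2^{k+1}B|)^{\delta(1-\kappa)/p}=2^{-(k+1)d\delta(1-\kappa)/p}$ decays geometrically, all $m+1$ binomial summands produce convergent series in $k$, giving $J_2^{(m)}\leq C(1+r/\rho(x_0))^{m\mu+N\cdot N_0/(N_0+1)}$. Setting $\vartheta_m=\max\{\kappa\theta'+\theta,\,m\mu+N\cdot N_0/(N_0+1)\}$ then closes the induction and yields the claimed boundedness on $L^{p,\kappa}_{\rho,\infty}(w)$.
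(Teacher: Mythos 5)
The paper offers no proof of this statement beyond the one-line remark that the conclusion ``follows by induction on $m$, details omitted,'' so there is no written argument to compare against; your sketch is essentially the filling-in of those omitted details, and the overall structure (split $f=f_1+f_2$, handle the local piece via a weighted $L^p$ bound for the higher commutator, handle the far piece via annular decomposition, kernel decay, and BMO$_{\rho,\theta}$ oscillation lemmas) is the right one and parallels the proof of Theorem~\ref{mainthm:3}.

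One step deserves more care than you give it: the weighted $L^p(w)$ boundedness of $[b,\mathcal T]_m$ for $m\geq 2$, which you use to control $J_1^{(m)}$. You justify it by ``iterating Theorem~\ref{cstrong} through the identity $[b,\mathcal T]_m=[b,[b,\mathcal T]_{m-1}]$,'' but Theorem~\ref{cstrong} applies to $\mathcal R$ and $\mathcal R^{\ast}$ specifically, not to the operator $[b,\mathcal T]_{m-1}$, whose kernel $[b(x)-b(y)]^{m-1}\mathcal K(x,y)$ is not a kernel of the type covered by Lemma~\ref{kernel} (it carries an unbounded BMO-type factor), so the commutator theorem cannot simply be fed back into itself. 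The standard ways to obtain the higher-order analogue of Theorem~\ref{cstrong} are either the Coifman--Rochberg--Weiss conjugation/Cauchy-integral trick applied to $e^{zb}\mathcal T(e^{-zb}\cdot)$, which requires the exponential integrability of $b$ encoded in Proposition~\ref{tangbmo} together with stability of the class $A^{\rho,\infty}_p$ under small perturbations by $e^{\epsilon b}$, or else a direct rerun of the arguments of \cite{bong2} using the binomial expansion of $[b(x)-b(y)]^m$ inside the kernel estimate and sharp-maximal-function machinery. You invoke exactly this binomial expansion in your treatment of $J_2^{(m)}$, so you have the right idea; you should make it the basis of the $J_1^{(m)}$ estimate too, rather than appealing to an iterated commutator identity that does not by itself yield the needed weighted bound. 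The remaining bookkeeping in your sketch --- the $jp$-power version of Lemma~\ref{BMO1} obtained by tail-integrating Proposition~\ref{wangbmo}, the $j$-th power of Lemma~\ref{BMO2}, the $(k+1)^m$ polynomial factor absorbed by the geometric decay of $(|B|/|2^{k+1}B|)^{\delta(1-\kappa)/p}$, and the choice of $N$ large --- is all correct and in line with what the paper does for $m=1$.
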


\begin{thm}
Let $p=1$, $0<\kappa<1$ and $w\in A^{\rho,\infty}_1$. If $V\in RH_q$ with $q\geq d$ and $b\in\mathrm{BMO}_{\rho,\infty}(\mathbb R^d)$, then for any given $\lambda>0$ and any given ball $B=B(x_0,r)$ of $\mathbb R^d$, there exist some constants $C>0$ and $\vartheta>0$ such that the following inequalities
\begin{equation*}
\begin{split}
&\frac{1}{w(B)^{\kappa}}\cdot w\big(\big\{x\in B:|[b,\mathcal R]_mf(x)|>\lambda\big\}\big)
\leq C\left(1+\frac{r}{\rho(x_0)}\right)^{\vartheta}
\bigg\|\Phi_m\bigg(\frac{|f|}{\lambda}\bigg)\bigg\|_{(L\log L)^{1,\kappa}_{\rho,\theta}(w)},\\
&\frac{1}{w(B)^{\kappa}}\cdot w\big(\big\{x\in B:|[b,\mathcal R^{\ast}]_mf(x)|>\lambda\big\}\big)
\leq C\left(1+\frac{r}{\rho(x_0)}\right)^{\vartheta}
\bigg\|\Phi_m\bigg(\frac{|f|}{\lambda}\bigg)\bigg\|_{(L\log L)^{1,\kappa}_{\rho,\theta}(w)}
\end{split}
\end{equation*}
hold for those functions $f$ such that $\Phi_m(|f|)\in(L\log L)^{1,\kappa}_{\rho,\theta}(w)$ with some fixed $\theta>0$, where $\Phi_m(t)=t\cdot(1+\log^+t)^m$, $m=2,3,\dots$.
\end{thm}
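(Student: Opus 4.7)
The plan is to mirror the proof of Theorem~\ref{mainthm:4} verbatim, upgrading each instance of $|b-b_B|$ to its $m$-th power and each occurrence of $\Phi$ to $\Phi_m$. Let $[b,\mathcal T]_m$ stand for either $[b,\mathcal R]_m$ or $[b,\mathcal R^{\ast}]_m$. As before I split $f=f_1+f_2$ with $f_1=f\chi_{2B}$, $f_2=f\chi_{(2B)^c}$, and decompose the level set at height $\lambda$ into a local term $J_1^{\prime\prime}$ (with $f_1$) and a nonlocal term $J_2^{\prime\prime}$ (with $f_2$). For $J_1^{\prime\prime}$ I would invoke the $m$-th order analog of Theorem~\ref{cweak},
\begin{equation*}
w\bigl(\bigl\{x\in\mathbb R^d:|[b,\mathcal T]_m g(x)|>\lambda\bigr\}\bigr)\le C\int_{\mathbb R^d}\Phi_m\bigl(|g(x)|/\lambda\bigr)w(x)\,dx,
\end{equation*}
which is obtained by induction on $m$ from the scheme of \cite{bong2} together with the conjugation identity $[b,\mathcal T]_m=[b,[b,\mathcal T]_{m-1}]$. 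Applied to $g=f_1$, followed by $\|\cdot\|_{L(w),2B}\le\|\cdot\|_{L\log L(w),2B}$ from \eqref{small} and the doubling estimate \eqref{doubling2}, this gives a bound of the form $C(1+r/\rho(x_0))^{\vartheta'}\|\Phi_m(|f|/\lambda)\|_{(L\log L)^{1,\kappa}_{\rho,\theta}(w)}$.

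For $J_2^{\prime\prime}$ the elementary inequality $|b(x)-b(y)|^m\le 2^{m-1}(|b(x)-b_B|^m+|b(y)-b_B|^m)$ yields the pointwise splitting $|[b,\mathcal T]_m f_2(x)|\le C_m(\xi_m(x)+\zeta_m(x))$ in direct analogy with \eqref{twoterm}. Two routine $m$-th power extensions of the paper's $\mathrm{BMO}_{\rho,\infty}$ machinery are then needed: the bound $\int_B|b-b_B|^m w(x)\,dx\le Cw(B)(1+r/\rho(x_0))^{\mu_m}$, proved by combining the layer-cake identity with Proposition~\ref{wangbmo} (this is the $m$-th power analog of Lemma~\ref{BMO1}); and the bound $|b_{2^{k+1}B}-b_B|^m\le C(k+1)^m(1+2^{k+1}r/\rho(x_0))^{m\theta''}$ coming from Lemma~\ref{BMO2}. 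The $\xi_m$ contribution is then handled exactly as the $\xi$ contribution in Theorem~\ref{mainthm:4}, using Chebyshev, the pointwise kernel estimate \eqref{Tf2}, the upgrade $|f|/\lambda\le\Phi_m(|f|/\lambda)$, and $\|\cdot\|_{L(w)}\le\|\cdot\|_{L\log L(w)}$.

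The main obstacle is the $\zeta_m$ term, where one must estimate dyadic averages of $|b(y)-b_B|^m\cdot|f(y)|/\lambda$ by the target $L\log L$-norm of $\Phi_m(|f|/\lambda)$. After passing to the weighted integral via $A^{\rho,\theta'}_1$ and splitting $|b-b_B|^m\le C_m(|b-b_{2^{k+1}B}|^m+|b_{2^{k+1}B}-b_B|^m)$, the $|b_{2^{k+1}B}-b_B|^m$ piece is harmless. For the variable piece, the na\"ive generalized H\"older with the pair $(\exp L^{1/m},L(\log L)^m)$ produces a factor $\|\Phi_m(|f|/\lambda)\|_{L(\log L)^m(w)}$ that is strictly stronger than what the target provides. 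I would instead apply the Young inequality $st\le C_m(\bar\Phi_m(s)+\Phi_m(t))$ with the complementary pair associated to $\Phi_m$, using the rescaling $s=(|b-b_{2^{k+1}B}|/c)^m$, $t=c^m|f|/\lambda$ and the choice $c=\|b\|_{\mathrm{BMO}_{\rho,\theta''}}(1+2^{k+1}r/\rho(x_0))^{\theta^{\ast}}/\gamma$, so that Lemma~\ref{BMO3} controls $\int(e^{|b-b_{2^{k+1}B}|/c}-1)w\,dy$ by $Cw(2^{k+1}B)(1+2^{k+1}r/\rho(x_0))^{\eta}$. The remaining term $\int\Phi_m(c^m|f|/\lambda)w\,dy$ is dominated via the scaling inequality $\Phi_m(\alpha t)\le C_m\alpha(1+\log^+\alpha)^m\Phi_m(t)$ by a polynomial factor in $(1+2^{k+1}r/\rho(x_0))$ multiplying $\|\Phi_m(|f|/\lambda)\|_{L\log L(w),2^{k+1}B}$. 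All the polynomial losses are absorbed into the final exponent $\vartheta$, Lemma~\ref{comparelem} compares $w(B)^{1-\kappa}/w(2^{k+1}B)^{1-\kappa}$ with the dyadic ratios, and choosing $N$ large enough secures convergence of the sum over $k$.
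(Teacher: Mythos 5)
The paper does not supply a proof of this theorem — it merely remarks that the conclusions follow ``by induction on~$m$'' and omits the details — so there is no published argument to compare against; I am assessing your proposal on its own merits, together with the template proof of Theorem~\ref{mainthm:4}.

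Your overall architecture (split $f=f_1+f_2$, invoke an $m$-th order analogue of Theorem~\ref{cweak} for the local piece, decompose the nonlocal piece via $|b(x)-b(y)|^m\lesssim|b(x)-b_B|^m+|b(y)-b_B|^m$, and run the kernel decay against $\mathrm{BMO}_{\rho,\infty}$ machinery) is the natural one, and you are right that the $\xi_m$ piece and the $|b_{2^{k+1}B}-b_B|^m$ piece go through as for $m=1$ after substituting Lemma~\ref{BMO1} with $p=m$ and raising Lemma~\ref{BMO2} to the $m$-th power. You are also right that the crux is the variable part of $\zeta_m$, and that the generalized H\"older pairing $(\exp L^{1/m},L(\log L)^m)$ — the only pair for which $\||b-b_{2^{k+1}B}|^m\|_{\exp L^{1/m}(w)}$ is finite — lands on a Luxemburg norm different from the target $\|\Phi_m(|f|/\lambda)\|_{L\log L(w),2^{k+1}B}$.

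However, your proposed workaround has a genuine gap. Applying Young's inequality $st\le\bar\Phi_m(s)+\Phi_m(t)$ with your rescaling $s=(|b-b_{2^{k+1}B}|/c)^m$, $t=c^m|f|/\lambda$ and $c$ chosen so that Lemma~\ref{BMO3} applies produces, after integrating against $w$ and dividing by $w(2^{k+1}B)$, a bound of the form
\begin{equation*}
\frac{1}{w(2^{k+1}B)}\int_{2^{k+1}B}|b-b_{2^{k+1}B}|^m\,\frac{|f|}{\lambda}\,w\,dy
\;\le\;
C_1\Big(1+\tfrac{2^{k+1}r}{\rho(x_0)}\Big)^{\eta}
\;+\;
C_2(c^m)\,\big\|\Phi_m(|f|/\lambda)\big\|_{L\log L(w),2^{k+1}B},
\end{equation*}
i.e.\ an \emph{additive} constant plus the desired multiplicative term. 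The additive constant comes from $\frac{1}{w(2^{k+1}B)}\int\bar\Phi_m(\cdots)w$, which Lemma~\ref{BMO3} controls but which is completely independent of $f$ and $\lambda$; summed over $k$ and multiplied by $w(B)^{1-\kappa}$ it yields a term $Cw(B)^{1-\kappa}(1+r/\rho(x_0))^{\vartheta}$ that is \emph{not} dominated by $\|\Phi_m(|f|/\lambda)\|_{(L\log L)^{1,\kappa}_{\rho,\theta}(w)}$ as the right-hand side of the theorem demands (consider $f\to0$, or $\lambda\to\infty$). The reason the $m=1$ argument in the paper escapes this is precisely that it does \emph{not} use Young's inequality at the modular level; it uses the generalized H\"older inequality~\eqref{gholder}, whose proof normalizes each factor by its \emph{Luxemburg norm} and therefore produces a purely multiplicative bound $2\|b-b_{2^{k+1}B}\|_{\exp L(w)}\|\Phi(|f|/\lambda)\|_{L\log L(w)}$. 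Replacing the Luxemburg normalization by the fixed constant $c$ from Lemma~\ref{BMO3} destroys that multiplicativity. If you instead normalize by the Luxemburg norms you recover H\"older with the pair $(\exp L^{1/m},L(\log L)^m)$ — which, as you yourself observed, gives a norm that is not pointwise dominated by the target $L\log L$ norm. So neither route closes the argument as written, and you would need a further idea (or a different formulation of the right-hand side) to finish the $\zeta_m$ estimate.

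A smaller issue: the $m$-th order weak $L\log L$ bound you invoke for $J_1''$ does not literally follow ``by induction'' from $[b,\mathcal T]_m=[b,[b,\mathcal T]_{m-1}]$, since weak-type endpoint estimates do not iterate under composition; one needs to redo the Calder\'on--Zygmund argument with $\Phi_m$ directly. That said, such an estimate is provable by adapting the scheme of~\cite{bong2}, so this is more a matter of wording than a flaw in the plan.
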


\section*{Acknowledgment}

The author would like to thank Professor L. Tang for providing the paper \cite{tang}.

\end{document}